\newcommand{\todayiso}{\the\year\twodigit\month\twodigit\day}
\newcommand{\Then}{\,\Rightarrow\,}
\newcommand{\Iff}{\,\Leftrightarrow\,}
\newcommand{\ForAll}{\forall\,}
\newcommand{\Exists}{\exists\,}
\newcommand{\FirstAlign}{\quad\,\,}
\newcommand{\NatNum}{\mathbb{N}}
\newcommand{\NatNumZero}{\NatNum_0} 
\newcommand{\IntNum}{\mathbb{Z}}
\newcommand{\RealNum}{\mathbb{R}}
\newcommand{\Set}[1]{{\{#1\}}}
\newcommand{\BigSet}[1]{{\left\{#1\right\}}}
\newcommand{\IntegerSet}[1]{{[#1]}}
\newcommand{\Cardinality}[1]{{|#1|}}
\newcommand{\PowerSet}[1]{{\mathcal{P}(#1)}}
\newcommand{\Indicator}[1]{\,\mathbb{I}_{#1}}
\newcommand{\Iverson}[1]{{[#1]}}
\DeclareMathOperator{\Support}{supp}
\newcommand{\Floor}[1]{\lfloor#1\rfloor}
\newcommand{\Graph}[1]{G(#1)}
\newcommand{\Neighbours}[1]{\mathcal{N}(#1)}
\newcommand{\NeighboursIncluded}[1]{\mathcal{N}_1(#1)}
\newcommand{\GraphMetricSymbol}{d}
\newcommand{\GraphMetric}[2]{\GraphMetricSymbol(#1,#2)}
\newcommand{\AdjacentTo}{{\,\backsim\,}}
\newcommand{\NotAdjacentTo}{{\,\not\backsim\,}}
\newcommand{\Iid}{\text{i.i.d. \hspace{-0.4ex}}}
\newcommand{\Proba}{\mathbb{P}}
\newcommand{\Expect}{\mathbb{E}}
\newcommand{\MeasureSpaceSymbol}{\mathcal{M}}
\newcommand{\ProbabilityMeasureSpace}[1]{\MeasureSpaceSymbol_1(#1)}
\newcommand{\Stochastically}[1]{\stackrel{st}{#1}}
\newcommand{\Tree}{\mathbb{T}}
\newcommand{\GrowthRateSymbol}{gr}
\newcommand{\GrowthUpperRate}[1]{{\operatorname{\overline{\GrowthRateSymbol}}(#1)}}
\newcommand{\RenderName}[1]{#1} 
\newcommand{\MakeName}[2]{
	\newcommand{#1}{\RenderName{#2}}
}
\MakeName{\NameAaronson}{Aaronson}
\MakeName{\NameAaronsonEtAl}{\NameAaronson{} et al.}
\MakeName{\NameAaronsonGilatKeaneDeValk}{\NameAaronson{}, Gilat, Keane \& de Valk}
\MakeName{\NameAntalPisztora}{Antal \& Pisztora}
\MakeName{\NameBernoulli}{Bernoulli}
\MakeName{\NameBalister}{Balister}
\MakeName{\NameBollobas}{Bollob\'{a}s}
\MakeName{\NameBalisterBollobas}{\NameBalister{} \& \NameBollobas{}}
\MakeName{\NameBoltzmann}{Boltzmann}
\MakeName{\NameCayley}{Cayley}
\MakeName{\NameDijkstra}{Dijkstra}
\MakeName{\NameDobrushin}{Dobrushin}
\MakeName{\NameDobrushinLangfordRuelle}{\NameDobrushin{},Langford \& Ruelle}
\MakeName{\NameDurrett}{Durrett}
\MakeName{\NameErdos}{Erd\"{o}s}
\MakeName{\NameErdosLovasz}{\NameErdos{} \& \NameLovasz{}}
\MakeName{\NameFaris}{Faris}
\MakeName{\NameFernandez}{Fern\'{a}ndez}
\MakeName{\NameFernandezProcacci}{\NameFernandez{} \& \NameProcacci{}}
\MakeName{\NameFolner}{F\o{}lner}
\MakeName{\NameGaltonWatson}{Galton-Watson}
\MakeName{\NameGibbs}{Gibbs}
\MakeName{\NameGuttman}{Guttman}
\MakeName{\NameHausdorff}{Hausdorff}
\MakeName{\NameHessenberg}{Hessenberg}
\MakeName{\NameIverson}{Iverson}
\MakeName{\NameKolmogorov}{Kolmogorov}
\MakeName{\NameKotecky}{Koteck\'{y}}
\MakeName{\NameKoteckyPreiss}{\NameKotecky{} \& \NamePreiss{}}
\MakeName{\NameLebesgue}{Lebesgue}
\MakeName{\NameLiggett}{Liggett}
\MakeName{\NameLiggettEtAl}{\NameLiggett{} et al.}
\MakeName{\NameLiggettSchonmannStacey}{\NameLiggett{}, \NameSchonmann{} \& \NameStacey{}}
\MakeName{\NameLovaszLocalLemma}{\NameLovasz{} Local Lemma}
\MakeName{\NameLovasz}{Lov\'{a}sz}
\MakeName{\NameLyons}{Lyons}
\MakeName{\NameMathieu}{Mathieu}
\MakeName{\NameMayer}{Mayer}
\MakeName{\NameMiracleSole}{Miracle-Sol\'{e}}
\MakeName{\NamePenrose}{Penrose}
\MakeName{\NamePeres}{Peres}
\MakeName{\NamePreiss}{Preiss}
\MakeName{\NameProcacci}{Procacci}
\MakeName{\NameRamsey}{Ramsey}
\MakeName{\NameSchonmann}{Schonmann}
\MakeName{\NameScott}{Scott}
\MakeName{\NameScottSokal}{\NameScott{} \& \NameSokal{}}
\MakeName{\NameShearer}{Shearer}
\MakeName{\NameShearersMeasure}{\NameShearer{}'s measure}
\MakeName{\NameSokal}{Sokal}
\MakeName{\NameStacey}{Stacey}
\MakeName{\NameTarski}{Tarski}
\MakeName{\NameTemmel}{Temmel}
\MakeName{\NameTodo}{Todo}
\MakeName{\NameUrsell}{Ursell}
\MakeName{\NameVanHove}{van Hove}
\MakeName{\NameWoess}{Woess}
\MakeName{\NameYoung}{Young}
\MakeName{\NameZorn}{Zorn}
\theoremstyle{plain}
\newtheorem{Thm}{Theorem}
\newtheorem{Lem}[Thm]{Lemma}
\newtheorem{Prop}[Thm]{Proposition}
\newtheorem{Conj}[Thm]{Conjecture}
\theoremstyle{definition}
\newtheorem{Def}[Thm]{Definition}
\theoremstyle{definition}
\theoremstyle{remark}
\newtheorem*{Rem}{Remark}
\theoremstyle{definition}
\theoremstyle{plain}
\theoremstyle{plain} 
\newcommand{\MxyProofStructureVerticalInclusion}{\text{%
	\begin{sideways}\begin{sideways}\begin{sideways}
		$\subseteq$
	\end{sideways}\end{sideways}\end{sideways}
}}
\newcommand{\MxyProofStructureVerticalInequality}{\text{%
	\begin{sideways}\begin{sideways}\begin{sideways}
		$\ge$
	\end{sideways}\end{sideways}\end{sideways}
}}
\newcommand{\LabelNonShearerImpliesNonDomination}{\text{(ND)}}
\newcommand{\LabelShearerImpliesUniformDomination}{\text{(UD)}}
\newcommand{\MxyDominationProofInclusions}[2]{
\xymatrix@C=#1@R=#2{
	*+{\PShearerSetInterior{G}}="n11"
		\ar@{}[r]|-{\stackrel{\LabelShearerImpliesUniformDomination}{\subseteq}} &
	*+{\PUniformDominationSet{\DependencyClassWeak{G}}}="n12"
		\ar@{}[r]|-{\subseteq} &
	*+{\PDominationSet{\DependencyClassWeak{G}}}="n13" &
	*+{}="n14"\\
	*+{}="n21" &
	*+{\PUniformDominationSet{\DependencyClassStrong{G}}}="n22"
		\ar@{}[r]|-{\subseteq}
		\ar@{}[u]|-{\MxyProofStructureVerticalInclusion} &
	*+{\PDominationSet{\DependencyClassStrong{G}}}="n23"
		\ar@{}[r]|-{\stackrel{\LabelNonShearerImpliesNonDomination}{\subseteq}}
		\ar@{}[u]|-{\MxyProofStructureVerticalInclusion}  &
	*+{\PShearerSetInterior{G}}="n24"
}}
\newcommand{\MxyDominationProofInequalities}[2]{
\xymatrix@C=#1@R=#2{
	*+{\PShearer{G}}="n11"
		\ar@{}[r]|-{\stackrel{\LabelShearerImpliesUniformDomination}{\ge}} &
	*+{\PUniformDomination{\DependencyClassWeak{G}}}="n12"
		\ar@{}[r]|-{\ge} &
	*+{\PDomination{\DependencyClassWeak{G}}}="n13" &
	*+{}="n14"\\
	*+{}="n21" &
	*+{\PUniformDomination{\DependencyClassStrong{G}}}="n22"
		\ar@{}[r]|-{\ge}
		\ar@{}[u]|-{\MxyProofStructureVerticalInequality} &
	*+{\PDomination{\DependencyClassStrong{G}}}="n23"
		\ar@{}[r]|-{\stackrel{\LabelNonShearerImpliesNonDomination}{\ge}}
		\ar@{}[u]|-{\MxyProofStructureVerticalInequality}  &
	*+{\PShearer{G}}="n24"
}}
\newcommand{\MxyDominationProofInequalitiesCenter}[2]{
\xymatrix@C=#1@R=#2{
	*+{\PDomination{\DependencyClassWeak{G}}}="n12"
		\ar@{}[r]|-{\le} &
	*+{\PUniformDomination{\DependencyClassWeak{G}}}="n13"\\
	*+{\PDomination{\DependencyClassStrong{G}}}="n22"
		\ar@{}[r]|-{\le}
		\ar@{}[u]|-{\MxyProofStructureVerticalInequality} &
	*+{\PUniformDomination{\DependencyClassStrong{G}}}="n23"
		\ar@{}[u]|-{\MxyProofStructureVerticalInequality}
}}
\newcommand{\AutomorphismGroup}[1]{\operatorname{Aut}(#1)}
\newcommand{\BernoulliProductField}[2]{{\Pi_{#1}^{#2}}}
\newcommand{\Configurations}[1]{{\mathcal{X}_{#1}}}
\newcommand{\Cylinder}[2]{{\Pi_{#1}^{-1}(#2)}}
\newcommand{\DependencyClassWeak}[1]{{\mathcal{C}_{#1}^{\text{weak}}}}
\newcommand{\DependencyClassWeakInvariant}[1]{{\mathcal{C}_{#1\text{-inv}}^{\text{weak}}}}
\newcommand{\DependencyClassStrong}[1]{{\mathcal{C}_{#1}^{\text{strong}}}}
\newcommand{\DependencyClassStrongInvariant}[1]{{\mathcal{C}_{#1\text{-inv}}^{\text{strong}}}}
\newcommand{\DominatedValue}[1]{{\sigma(#1)}}
\newcommand{\DominatedVectors}[1]{{\Sigma(#1)}}
\newcommand{\FuzzKZ}{{\IntNum_{(k)}}}
\newcommand{\Independent}{\text{ indep}}
\newcommand{\MonotoneFunctionsOn}[1]{{\operatorname{Mon}(#1)}}
\newcommand{\PowerFracTriple}[3]{{\frac{#1^{#1}#2^{#2}}{#3^{#3}}}}
\newcommand{\PowerFracDual}[2]{{\PowerFracTriple{#1}{\phantom{}}{#2}}}
\newcommand{\ShearerCriticalFunction}[1]{{\Xi_{#1}}}
\newcommand{\ShearerMeasure}[2]{{\mu_{#1,#2}}}
\newcommand{\ShearerOVOEP}[2]{\alpha_{#1}^{#2}}
\newcommand{\StrictlyGreater}{\gg}
\newcommand{\StrictlyLess}{\ll}
\newcommand{\PDomination}[1]{{p_{dom}^{#1}}}
\newcommand{\QDomination}[1]{{q_{dom}^{#1}}}
\newcommand{\PDominationSet}[1]{\mathcal{P}_{dom}^{#1}}
\newcommand{\PUniformDomination}[1]{{p_{udom}^{#1}}}
\newcommand{\QUniformDomination}[1]{{q_{udom}^{#1}}}
\newcommand{\PUniformDominationSet}[1]{\mathcal{P}_{udom}^{#1}}
\newcommand{\PShearer}[1]{{p_{sh}^{#1}}}
\newcommand{\QShearer}[1]{{q_{sh}^{#1}}}
\newcommand{\PShearerSet}[1]{\mathcal{P}_{sh}^{#1}}
\newcommand{\PShearerSetBoundary}[1]{\partial\mathcal{P}_{sh}^{#1}}
\newcommand{\PShearerSetInterior}[1]{\mathring{\mathcal{P}}_{sh}^{#1}}
\newcommand{\PShearerSetInteriorInvariant}[1]{\mathring{\mathcal{P}}_{sh}^{#1\text{-inv}}}
\title{\NameShearer{}'s measure and stochastic domination of product measures}
\author{
	Christoph Temmel
		\footnote{5030 Institut für Mathematische Strukturtheorie, Technische Universität Graz, Steyrergasse 30/III, 8010 Graz, Austria}
		\footnote{Email: temmel@math.tugraz.at}
}
\date{}
\begin{document}

\generalisodate
\maketitle

\begin{abstract}
Let $G:=(V,E)$ be a locally finite graph. Let $\vec{p}\in[0,1]^V$. We show that \NameShearersMeasure{}, introduced in the context of the \NameLovaszLocalLemma{}, with marginal distribution determined by $\vec{p}$, exists on $G$ iff every \NameBernoulli{} random field with the same marginals and dependency graph $G$ dominates stochastically a non-trivial \NameBernoulli{} product field. Additionally we derive a non-trivial uniform lower bound for the parameter vector of the dominated \NameBernoulli{} product field. This generalises previous results by \NameLiggettSchonmannStacey{} in the homogeneous case, in particular on the $k$-fuzz of $\IntNum$. Using the connection between \NameShearersMeasure{} and a hardcore lattice gas established by \NameScottSokal{}, we transfer bounds derived from cluster expansions of lattice gas partition functions to the stochastic domination problem.
\end{abstract}



\newcommand{\Keywords}{
\begin{paragraph}{Keywords:}
  stochastic domination,
  \NameLovaszLocalLemma{},
  product measure,
  \NameBernoulli{} random field,
  stochastic order,
  hardcore lattice gas.
\end{paragraph}
}


\newcommand{\Classification}{
\begin{paragraph}{MSC 2010:}
  60E15 (primary), 
  60G60, 
  82B20, 
  05D40. 
\end{paragraph}
}

\Keywords
\Classification

$\phantom{}$\vspace{1em}\hrule\vspace{1em}
This is an extended version of \cite{Temmel_domination}.
\vspace{1em}\hrule\vspace{1em}

\tableofcontents
\listoffigures

\section{Introduction}
\label{sec:introduction}

The question under which conditions a \NameBernoulli{} random field (short BRF) stochastically dominates a \NameBernoulli{} product field (short BPF) is of interest in probability and percolation theory. Knowledge of this kind allows the transfer of results from the independent case to more general settings. Of particular interest are BRFs with a dependency structure described by a graph $G$ and prescribed common marginal parameter $p$, as they often arise from rescaling arguments \cite{Grimmett_percolation_2nd}, dependent models \cite{BollobasRiordan_percolation} or particle systems \cite{Liggett_ips}. In this setting an interesting question is to find lower bounds on $p$ which guarantee stochastic domination for every such BRF.\\

This question has been investigated in the setting of boot-strap percolation \cite[section 2]{Andjel_characteristic} and super-critical \NameBernoulli{} percolation \cite[section 2]{AntalPisztora_chemical}. Finally \NameLiggettSchonmannStacey{} \cite{LiggettSchonmannStacey_domination} derived a generic lower bound for dependency graphs with uniformly bounded degree. Of particular interest is the $k$-fuzz of $\IntNum$ (short $\FuzzKZ$, that is the graph with vertex set $\IntNum$ and edges between all integers at distance less than or equal to $k$), which is the dependency graph of $k$-dependent BRFs on $\IntNum$. In this case they determined the minimal $p$, for which stochastic domination of a non-trivial BPF holds for each such BRF on $\FuzzKZ$. Even more, they showed, that in this case the parameter of the dominated BPF is uniformly bounded from below and nonzero for this minimal $p$ and made a conjecture about the size of the jump of the value of the parameter of the dominated BPF at this minimal $p$.\\

Their main tools have been a sufficient condition highly reminiscent of the \NameLovaszLocalLemma{} \cite{ErdosLovasz_hypergraphs} (short LLL, also known as the \NameDobrushin{} condition \cite{Dobrushin_perturbation} in statistical mechanics) and the explicit use of \NameShearersMeasure{} \cite{Shearer_problem} on $\FuzzKZ$ to construct a series of probability measures dominating only trivial BPFs. Recall that \NameShearersMeasure{} is the uniform minimal probability measure in the context of the LLL. It is also related to the grand canonical partition function of a lattice gas with both hard-core interaction and hard-core self-repulsion \cite{ScottSokal_repulsive,BissacotFernandezProccaciScoppola_improvement}.\\

Extending the work of \NameLiggettSchonmannStacey{} in a natural way we demonstrate, that the use of \NameShearersMeasure{} and the overall similarity between their proof and those concerning only \NameShearersMeasure{} is not coincidence, but part of a larger picture. We show that there is a non-trivial uniform lower bound on the parameter vector of the BPF dominated by a BRF with marginal parameter vector $\vec{p}$ and dependency graph $G$ iff \NameShearersMeasure{} with prescribed marginal parameter vector $\vec{p}$ exists on $G$.\\

After reparametrisation, the set of admissible vectors $\vec{p}$ is equivalent to the poly-disc of absolute and uniform convergence of the cluster expansion of the partition function of a hard-core lattice gas around fugacity $\vec{0}$ \cite{ScottSokal_repulsive,BissacotFernandezProccaciScoppola_improvement} allowing a high-temperature expansion \cite{Dobrushin_perturbation}. This opens the door to a reinterpretation of results from cluster expansion techniques \cite{GruberKunz_general,FernandezProcacci_cluster,BissacotFernandezProccaci_convergence} or tree equivalence techniques \cite[sections 6 \& 8]{ScottSokal_repulsive}, leading to improved estimates on admissible $\vec{p}$ for the domination problem. Possible future lines of research include the search for probabilistic interpretations of these combinatorial and analytic results.\\

The layout of this paper is as follows: we formulate the stochastic domination problem in section \ref{sec:setupAndProblemStatement} and give a short introduction to \NameShearersMeasure{} in section \ref{sec:shearerPrimer}. Section \ref{sec:mainResultsAndDiscussion} contains our new results, followed by examples of reinterpreted bounds in section \ref{sec:reinterpretationOfBounds}. Finally section \ref{sec:weakInvariantCase} deals with the weak invariant case and we refute the conjecture by \NameLiggettSchonmannStacey{} concerning the minimality of \NameShearersMeasure{} for the dominated parameter on $\FuzzKZ$ in section \ref{sec:asymptoticJumpSizeKFuzzZ}.

\section{Setup and problem statement}
\label{sec:setupAndProblemStatement}
Let $G:=(V,E)$ be a locally finite graph. Denote by $\Neighbours{v}$ the \emph{set of neighbours} of $v$ and by $\NeighboursIncluded{v}:=\Neighbours{v}\uplus\Set{v}$ the \emph{neighbourhood of $v$ including $v$ itself}. For every $W\subseteq V$ denote by $\Graph{W}$ the \emph{subgraph of $G$ induced by} $W$.\\

Vectors are indexed by $V$, i.e. $\vec{x}:=(x_v)_{v\in V}$. Multiplication of vectors acts coordinate-wise. We have the natural partial order $\le$ on real-valued vectors. Of particular importance is the notion of $\vec{x}\StrictlyLess\vec{y}$, which means that there is a strict inequality in all coordinates.  For $W\subseteq V$ let $\vec{x}_W:=(x_v)_{v\in W}$, where needed for disambiguation. We otherwise ignore superfluous coordinates. If we use a scalar $x$ in place of a vector $\vec{x}$ we mean to use $\vec{x}=x\vec{1}$ and call this the \emph{homogeneous setting}. We \emph{always assume the relation} $q=1-p$, also in vectorized form and when having corresponding subscripts. Denote by $\Configurations{V}:=\Set{0,1}^V$ the compact \emph{space of binary configurations} indexed by $V$. Equip $\Configurations{V}$ with the natural partial order induced by $\vec{x}\le\vec{y}$ (isomorph to the partial order induced by the subset relation in $\PowerSet{V}$).\\

A \emph{\NameBernoulli{} random field} (short BRF) $Y:=(Y_v)_{v\in V}$ on $G$ is a rv taking values in $\Configurations{V}$, seen as a collection of \NameBernoulli{} rvs $Y_v$ indexed by $V$. A \emph{\NameBernoulli{} product field} (short BPF) $X$ is a BRF where $(X_v)_{v\in V}$ is a collection of independent \NameBernoulli{} rvs. We write its law as $\BernoulliProductField{\vec{x}}{V}$, where $x_v:=\BernoulliProductField{\vec{x}}{V}(X_v=1)$.\\

A subset $A$ of the space $\Configurations{V}$ or the space $[0,1]^V$ is an \emph{up-set} iff
\begin{equation}\label{eq:upSet}
	\ForAll \vec{x}\in A,\vec{y}\in\Configurations{V}:\quad
	\vec{x}\le\vec{y} \Then \vec{y}\in A\,.
\end{equation}
Replacing $\le$ by $\ge$ in \eqref{eq:upSet} we define a \emph{down-set}.\\

We recall the definition of \emph{stochastic domination} \cite{Liggett_ips}. Let $Y$ and $Z$ be two BRFs on $G$. Denote by $\MonotoneFunctionsOn{V}$ the set of \emph{monotone continuous functions} from $\Configurations{V}$ to $\RealNum$, that is $\vec{s}\le\vec{t}$ implies $f(\vec{s})\le f(\vec{t})$. We say that $Y$ dominates $Z$ stochastically iff they respect monotonicity in expectation:
\begin{equation}\label{eq:stochasticDomination}
	Y\Stochastically{\ge}Z \Iff
	\Big(\ForAll f\in\MonotoneFunctionsOn{V}:
	\quad\Expect[f(Y)]\ge\Expect[f(Z)]\,\Big)\,.
\end{equation}
Equation \eqref{eq:stochasticDomination} actually refers to the laws of $Y$ and $Z$. We abuse notation and treat a BRF and its law as interchangeable. Stochastic domination is equivalent to the existence of a coupling of $Y$ and $Z$ with $\Proba(Y\ge Z)=1$ \cite{Strassen_existenceGivenMarginals}.\\

The \emph{set of all dominated \NameBernoulli{} parameter vectors} (short: set of dominated vectors) by a BRF Y is
\begin{subequations}\label{eq:baseDefinitionsInhomogeneous}
\begin{equation}\label{eq:dominated\NameBernoulli{}Vectors}
	\DominatedVectors{Y}
	:=\Set{\vec{c}: Y\Stochastically{\ge}\BernoulliProductField{\vec{c}}{V}}\,.
\end{equation}
It describes all the different BPFs minorating $Y$ stochastically. The set $\DominatedVectors{Y}$ is a closed down-set. The definition of dominated vector extends to a non-empty class $C$ of BRFs by
\begin{equation}\label{eq:dominatedClassVectors}
	\DominatedVectors{C}
	:=\bigcap_{Y\in C} \DominatedVectors{Y}\\
	=\Set{
		\vec{c}: \ForAll Y\in C:
		Y\Stochastically{\ge}\BernoulliProductField{\vec{c}}{V}
	}\,.
\end{equation}
For a class $C$ of BRFs denote by $C(\vec{p})$ the subclass consisting of BRFs with marginal parameter vector $\vec{p}$. We call a BPF with law $\BernoulliProductField{\vec{c}}{V}$, respectively the vector $\vec{c}$, \emph{non-trivial} iff $\vec{c}\StrictlyGreater0$. Our \emph{main question} is under which conditions all BRFs in a class $C$ dominate a non-trivial BPF. Even stronger, we ask whether they all dominate a common non-trivial BPF. Hence, given a class $C$, we investigate the \emph{set of parameter vectors guaranteeing non-trivial domination}
\begin{equation}\label{eq:dominatedParametersNonuniform}
	\PDominationSet{C}:=\BigSet{
		\vec{p}\in[0,1]^V:
		\ForAll Y\in C(\vec{p}):
		\Exists \vec{c}\StrictlyGreater\vec{0}:
		\vec{c}\in\DominatedVectors{Y}
	}
\end{equation}
and the \emph{set of parameter vectors guaranteeing uniform non-trivial domination}
\begin{equation}\label{eq:dominatedParametersUniform}
	\PUniformDominationSet{C}:=\BigSet{
		\vec{p}\in[0,1]^V:
		\Exists \vec{c}\StrictlyGreater\vec{0}:
		\vec{c}\in\DominatedVectors{C(\vec{p})}
	}\,.
\end{equation}
We have the obvious inclusion
\begin{equation}\label{eq:dominatedParametersInclusion}
	\PUniformDominationSet{C}\subseteq\PDominationSet{C}\,.
\end{equation}
\end{subequations}
The main contribution of this paper is the characterization and description of certain properties of the sets \eqref{eq:dominatedParametersUniform} and \eqref{eq:dominatedParametersNonuniform} for some classes of BRFs.\\

A first class of BRFs is the so-called \emph{weak dependency class} \cite[(1.1)]{LiggettSchonmannStacey_domination} with marginal parameter $\vec{p}$ on $G$:
\begin{equation}\label{eq:dependencyClassWeak}
	\DependencyClassWeak{G}(\vec{p}):=
		\Set{\text{BRF }Y: \ForAll v\in V:
		\Proba(Y_v=1|Y_{V\setminus\NeighboursIncluded{v}})\ge p_v}\,.
\end{equation}

\begin{NoteToSelf}
The class of \emph{lop-sided} probability measures \cite[page 70]{AlonSpencer_probmethod_third}, \cite[theorem 1.2]{ScottSokal_repulsive} is a subclass of $\DependencyClassWeak{G}(\vec{p})$.
\end{NoteToSelf}

\begin{NoteToSelf}
In the rhs of \eqref{eq:dependencyClassWeak} an $=p_v$ would be too restrictive. Nevertheless, $=p_v$ for all $v$ does not imply that $Y\in\DependencyClassStrong{G}(\vec{p})$ -- \cite[end of section $2$]{LiggettSchonmannStacey_domination} demonstrates this in the mixing example of finite \NameShearersMeasure{}s on $\IntNum$.
\end{NoteToSelf}

In this context $G$ is a \emph{weak dependency graph} of $Y$. We say that $G$ is a \emph{strong dependency graph} of a BRF $Y$ iff
\begin{equation}\label{eq:dependencyGraphStrong}
	\ForAll W_1,W_2\subset V:\quad
	\GraphMetric{W_1}{W_2}>1\Then Y_{W_1}\text{ is independent of }Y_{W_2}\,.
\end{equation}
In both cases, adding edges does not change $G$'s status as dependency graph of $Y$. It is possible that $Y$ has multiple minimal dependency graphs \cite[section 4.1]{ScottSokal_repulsive}. The second class is the so-called \emph{strong dependency class} \cite[section $0$]{LiggettSchonmannStacey_domination} with marginal parameter $\vec{p}$ on $G$:
\begin{equation}\label{eq:dependencyClassStrong}
	\DependencyClassStrong{G}(\vec{p}):=
		\left\{\text{BRF }Y:
			\begin{gathered}
				\ForAll v\in V:\quad\Proba(Y_v=1)=p_v\\
				G\text{ is a strong dependency graph of }Y
			\end{gathered}
		\right\}\,.
\end{equation}
In particular
\begin{equation}\label{eq:strongIsWeak}
	\DependencyClassStrong{G}(\vec{p})
	\subseteq\DependencyClassWeak{G}(\vec{p})\,.
\end{equation}
In all but some trivial cases the inclusion  \ref{eq:strongIsWeak} is strict (see after theorem \ref{thm:weakInvariantDominationShearerEquivalence}).
\section{A primer on \NameShearersMeasure{}}
\label{sec:shearerPrimer}
This section contains an introduction to and overview of \NameShearersMeasure{}. The following construction is due to \NameShearer{} \cite{Shearer_problem}. Let $G:=(V,E)$ be finite and $\vec{p}\in[0,1]^V$. Recall that an \emph{independent set of vertices} (in the graph theoretic sense) contains no adjacent vertices. Create a signed measure $\ShearerMeasure{G}{\vec{p}}$ on $\Configurations{V}$ with strong dependency graph $G$ by setting the marginals
\begin{subequations}\label{eq:shearersMeasure}
\begin{equation}\label{eq:shearerZeroMarginalDefinition}
	\ForAll W\subseteq V:\quad
	\ShearerMeasure{G}{\vec{p}}(Y_W=\vec{0})
	:=\begin{cases}
		\prod_{v\in W} q_v &W\text{ independent,}\\
		0 &W\text{ not independent.}
	\end{cases}
\end{equation}
Use the \emph{inclusion-exclusion principle} to complete $\ShearerMeasure{G}{\vec{p}}$:
\begin{equation}\label{eq:shearerInclusionExclusion}
	\ForAll W\subseteq V:\quad
	\ShearerMeasure{G}{\vec{p}}(Y_W=\vec{0},Y_{V\setminus W}=\vec{1})
	:=\sum_{\substack{W\subseteq T\subseteq V\\T\Independent}}
	(-1)^{\Cardinality{T}-\Cardinality{W}} \prod_{v\in T} q_v\,.
\end{equation}
\end{subequations}

\begin{NoteToSelf}
Do not write the definition \eqref{eq:shearerZeroMarginalDefinition} via explicit cylinder sets, as it would harm readability.
\end{NoteToSelf}

Define the \emph{critical function} of \NameShearer{}'s signed measure on $G$ by
\begin{equation}\label{eq:shearerCriticalFunction}
	\ShearerCriticalFunction{G}:\quad [0,1]^V\to\RealNum\qquad
	\vec{p}\mapsto \ShearerCriticalFunction{G}(\vec{p})
	:=\ShearerMeasure{G}{\vec{p}}(Y_V=\vec{1})
	=\sum_{\substack{T\subseteq V\\T\Independent}} \prod_{v\in T} (-q_v)\,.
\end{equation}
In graph theory \eqref{eq:shearerCriticalFunction} is also known as the \emph{independent set polynomial} of $G$ \cite{FisherSolow_dependence,HoedeLi_clique} and in lattice gas theory as the grand \emph{canonical partition function} at negative fugacity $-\vec{q}$ \cite[section 2]{ScottSokal_repulsive}. It satisfies a \emph{fundamental identity} (an instance of a \emph{deletion-contraction identity})
\begin{equation}\label{eq:shearerCriticalFunctionFundamentalIdentity}
	\ForAll v\in V, \vec{p}\in[0,1]^V:\quad
		\ShearerCriticalFunction{G}(\vec{p})
		=\ShearerCriticalFunction{\Graph{V\setminus\Set{v}}}(\vec{p})
		-q_v\,\ShearerCriticalFunction{\Graph{V\setminus\NeighboursIncluded{v}}}(\vec{p})\,,
\end{equation}
derived from \eqref{eq:shearerCriticalFunction} by discriminating between independent sets containing $v$ and those which do not.\\

The \emph{set of admissible parameters for \NameShearersMeasure{}} is
\begin{equation}\label{eq:shearerParameters}
	\begin{aligned}
	\PShearerSet{G}
	:=&\Set{
		\vec{p}\in[0,1]^V:\quad
		\ShearerMeasure{G}{\vec{p}}\text{ is a probability measure}
	}\\
	=&\Set{
		\vec{p}\in[0,1]^V:\quad\ForAll W\subseteq V:\quad
		\ShearerCriticalFunction{\Graph{W}}(\vec{p})\ge0 }\,.
	\end{aligned}
\end{equation}
The set $\PShearerSet{G}$ is closed, strictly decreasing when adding edges and an up-set \cite[proposition 2.15 (b)]{ScottSokal_repulsive}, hence connected. It always contains the vector $\vec{1}$ and, unless $E=\emptyset$, never the vector $\vec{0}$. Therefore it is a non-trivial subset of $[0,1]^V$ (see also section \ref{sec:reinterpretationOfBounds}). The function $\ShearerCriticalFunction{G}$ is strictly increasing on $\PShearerSet{G}$. It is convenient to subdivide $\PShearerSet{G}$ further into its \emph{boundary}
\begin{equation}\label{eq:shearerParametersBoundary}
	\PShearerSetBoundary{G}
	:=\Set{\vec{p}:
		\ShearerCriticalFunction{G}(\vec{p})=0
		\text{ and }
		\ShearerMeasure{G}{\vec{p}}\text{ is a probability measure}
	}
\end{equation}
and \emph{interior} (both seen as subsets of the space $[0,1]^V$)
\begin{equation}\label{eq:shearerParametersInterior}
	\begin{aligned}
	\PShearerSetInterior{G}
	:=\PShearerSet{G}\setminus\PShearerSetBoundary{G}
	&=\Set{\vec{p}:
		\ShearerCriticalFunction{G}(\vec{p})>0
		\text{ and }
		\ShearerMeasure{G}{\vec{p}}\text{ is a probability measure}
	}\\
	&=\Set{\vec{p}: \ShearerCriticalFunction{H}(\vec{p})>0\text{ for all subgraphs }H\text{ of }G}\,.
	\end{aligned}
\end{equation}

Finally we see that for $\vec{p}\in\PShearerSet{G}$ the probability measure $\ShearerMeasure{G}{\vec{p}}$
\begin{subequations}\label{eq:shearerCharacterizationProbability}
\begin{gather}
	\text{has dependency graph $G$,}
	\label{eq:shearerCharacterizationProbabilityDependencyGraph}\\
	\text{has marginal parameter $\vec{p}$, i.e $\ForAll v\in V: \ShearerMeasure{G}{\vec{p}}(Y_v=1)=p_v$,}
	\label{eq:shearerCharacterizationProbabilityMarginalParameter}\\
	\text{and forbids neighbouring $0$s, i.e. $\ForAll (v,w)\in E: \ShearerMeasure{G}{\vec{p}}(Y_v=Y_w=0)=0$.}
	\label{eq:shearerCharacterizationProbabilityMarginalNeighbouringZeros}
\end{gather}
\end{subequations}
Properties \eqref{eq:shearerCharacterizationProbabilityDependencyGraph} and \eqref{eq:shearerCharacterizationProbabilityMarginalParameter} are equivalent to $\ShearerMeasure{G}{\vec{p}}\in\DependencyClassStrong{G}(\vec{p})$. Every probability measure $\nu$ on $\Configurations{V}$ fulfilling \eqref{eq:shearerCharacterizationProbability} can be constructed by \eqref{eq:shearersMeasure} and thus coincides with $\ShearerMeasure{G}{\vec{p}}$. Hence \eqref{eq:shearerCharacterizationProbability} \emph{characterizes} $\ShearerMeasure{G}{\vec{p}}$.\\

The importance of \NameShearersMeasure{} is due to its \emph{uniform minimality} with respect to certain conditional probabilities:

\begin{Lem}[{\cite[theorem 1]{Shearer_problem}}]\label{lem:shearerMinimality}
Let $\vec{p}\in\PShearerSet{G}$ and $Z\in\DependencyClassWeak{G}(\vec{p})$. Then $\ForAll W\subseteq V$:
\begin{subequations}\label{eq:shearerMinimality}
\begin{equation}\label{eq:shearerMinimalityCriticalFunction}
	\Proba(Z_W=\vec{1})
	\ge\ShearerMeasure{G}{\vec{p}}(Y_W=\vec{1})
	=\ShearerCriticalFunction{\Graph{W}}(\vec{p})
	\ge 0
\end{equation}
and $\ForAll W\subseteq U\subseteq V$: if $\ShearerCriticalFunction{\Graph{W}}(\vec{p})>0$, then
\begin{equation}\label{eq:shearerMinimalityOVOEP}
	\Proba(Z_{U}=\vec{1}|Z_{W}=\vec{1})
	\ge \ShearerMeasure{G}{\vec{p}}(Y_{U}=\vec{1}|Y_{W}=\vec{1})
	= \frac{\ShearerCriticalFunction{\Graph{U}}(\vec{p})}%
	       {\ShearerCriticalFunction{\Graph{W}}(\vec{p})}
	\ge 0\,.
\end{equation}
\end{subequations}
\end{Lem}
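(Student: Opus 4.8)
The middle equalities in \eqref{eq:shearerMinimality} are quick: the restriction of $\ShearerMeasure{G}{\vec{p}}$ to the coordinates in $W$ satisfies the characterisation \eqref{eq:shearerCharacterizationProbability} for the induced subgraph $\Graph{W}$, hence equals $\ShearerMeasure{\Graph{W}}{\vec{p}}$ (a probability measure, since $\vec{p}$ restricts into $\PShearerSet{\Graph{W}}$), and $\ShearerMeasure{\Graph{W}}{\vec{p}}(Y_W=\vec{1})=\ShearerCriticalFunction{\Graph{W}}(\vec{p})$ by \eqref{eq:shearerCriticalFunction}; the conditional form follows by division, and $\ShearerCriticalFunction{\Graph{W}}(\vec{p})\ge0$ because $\vec{p}\in\PShearerSet{G}$. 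The real content is the two probabilistic inequalities, and the plan is to prove them simultaneously by induction on $n:=\Cardinality{U}$ in the single conditional form
\[
	(\star)\qquad
	\ForAll W\subseteq U\subseteq V:\quad
	\ShearerCriticalFunction{\Graph{W}}(\vec{p})>0
	\ \Then\
	\Proba(Z_U=\vec{1})\ \ge\ \Proba(Z_W=\vec{1})\,\frac{\ShearerCriticalFunction{\Graph{U}}(\vec{p})}{\ShearerCriticalFunction{\Graph{W}}(\vec{p})}\,.
\]
Specialising $(\star)$ to $W=\emptyset$ (where $\ShearerCriticalFunction{\Graph{\emptyset}}(\vec{p})=1$) recovers \eqref{eq:shearerMinimalityCriticalFunction} when $\ShearerCriticalFunction{\Graph{W}}(\vec{p})>0$, while $\ShearerCriticalFunction{\Graph{W}}(\vec{p})=0$ makes that bound the trivial $\Proba(Z_W=\vec{1})\ge0$; specialising to $U=W$ shows $\ShearerCriticalFunction{\Graph{W}}(\vec{p})>0$ forces $\Proba(Z_W=\vec{1})>0$, so dividing $(\star)$ by $\Proba(Z_W=\vec{1})$ legitimately yields \eqref{eq:shearerMinimalityOVOEP}.

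Two preliminaries feed the induction. From the fundamental identity \eqref{eq:shearerCriticalFunctionFundamentalIdentity} together with $\ShearerCriticalFunction{H}(\vec{p})\ge0$ for all subgraphs $H$ (as $\vec{p}\in\PShearerSet{G}$), peeling off vertices one at a time gives $\ShearerCriticalFunction{\Graph{W'}}(\vec{p})\ge\ShearerCriticalFunction{\Graph{W}}(\vec{p})$ whenever $W'\subseteq W$, so positivity of $\ShearerCriticalFunction{\Graph{W}}(\vec{p})$ descends to every subset of $W$. From $Z\in\DependencyClassWeak{G}(\vec{p})$ one gets $\Proba(Z_v=0\mid Z_{V\setminus\NeighboursIncluded{v}})\le q_v$, hence $\Proba(Z_v=0\mid Z_{W''}=\vec{1})\le q_v$ for every $W''\subseteq V\setminus\NeighboursIncluded{v}$ with $\Proba(Z_{W''}=\vec{1})>0$, by the tower property.

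For the inductive step fix $U$ of size $n$ and $W\subseteq U$ with $\ShearerCriticalFunction{\Graph{W}}(\vec{p})>0$; the case $W=U$ is immediate, so pick $v\in U\setminus W$ and set $U':=U\setminus\Set{v}\supseteq W$ and $U'':=U\setminus\NeighboursIncluded{v}\subseteq U'$. If $\ShearerCriticalFunction{\Graph{U'}}(\vec{p})=0$ then $\ShearerCriticalFunction{\Graph{U}}(\vec{p})=0$ and $(\star)$ is trivial, so assume $\ShearerCriticalFunction{\Graph{U'}}(\vec{p})>0$; then $\ShearerCriticalFunction{\Graph{U''}}(\vec{p})>0$, and $\Proba(Z_{U'}=\vec{1})>0$ and $\Proba(Z_{U''}=\vec{1})>0$, by the preliminaries. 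Using $W\subseteq U'\subseteq U$, factor $\Proba(Z_U=\vec{1}\mid Z_W=\vec{1})=\Proba(Z_{U'}=\vec{1}\mid Z_W=\vec{1})\cdot\Proba(Z_v=1\mid Z_{U'}=\vec{1})$. The first factor is $\ge\ShearerCriticalFunction{\Graph{U'}}(\vec{p})/\ShearerCriticalFunction{\Graph{W}}(\vec{p})$ by the induction hypothesis for $W\subseteq U'$ ($\Cardinality{U'}=n-1$). For the second, $U''\subseteq U'$ and the weak-dependency bound give $\Proba(Z_v=0,Z_{U'}=\vec{1})\le\Proba(Z_v=0,Z_{U''}=\vec{1})\le q_v\Proba(Z_{U''}=\vec{1})$; dividing by $\Proba(Z_{U'}=\vec{1})$, then using the induction hypothesis for $U''\subseteq U'$ in the form $\Proba(Z_{U''}=\vec{1})/\Proba(Z_{U'}=\vec{1})\le\ShearerCriticalFunction{\Graph{U''}}(\vec{p})/\ShearerCriticalFunction{\Graph{U'}}(\vec{p})$, and finally \eqref{eq:shearerCriticalFunctionFundamentalIdentity} at $v$ in $\Graph{U}$, yields $\Proba(Z_v=1\mid Z_{U'}=\vec{1})\ge 1-q_v\ShearerCriticalFunction{\Graph{U''}}(\vec{p})/\ShearerCriticalFunction{\Graph{U'}}(\vec{p})=\ShearerCriticalFunction{\Graph{U}}(\vec{p})/\ShearerCriticalFunction{\Graph{U'}}(\vec{p})\ge0$. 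Multiplying the two non-negative lower bounds cancels $\ShearerCriticalFunction{\Graph{U'}}(\vec{p})$ and produces $(\star)$ for $U$; the base case $n=0$ reads $\Proba(Z_\emptyset=\vec{1})=1$.

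The step I expect to need the most care is exactly the decision to carry the \emph{conditional} statement $(\star)$ through the induction rather than the plain inequality $\Proba(Z_W=\vec{1})\ge\ShearerCriticalFunction{\Graph{W}}(\vec{p})$: a naive deletion--contraction on the plain inequality calls for an \emph{upper} bound on the grandchild mass $\Proba(Z_{U''}=\vec{1})$, which the induction hypothesis does not supply, whereas $(\star)$ hands over precisely the ratio estimate $\Proba(Z_{U''}=\vec{1})/\Proba(Z_{U'}=\vec{1})\le\ShearerCriticalFunction{\Graph{U''}}(\vec{p})/\ShearerCriticalFunction{\Graph{U'}}(\vec{p})$ needed after the weak-dependency step. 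The only other thing requiring attention is keeping every conditioning event of positive probability, which is absorbed by the subgraph monotonicity of $\ShearerCriticalFunction{}$ (first preliminary) and the implication $\ShearerCriticalFunction{\Graph{W}}(\vec{p})>0\Rightarrow\Proba(Z_W=\vec{1})>0$.
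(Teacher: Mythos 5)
Your proof is correct and is essentially the paper's own argument: the same weak-dependency bound $\Proba(Z_v=0,Z_{U'}=\vec{1})\le q_v\,\Proba(Z_{U\setminus\NeighboursIncluded{v}}=\vec{1})$, the fundamental identity \eqref{eq:shearerCriticalFunctionFundamentalIdentity}, and an induction on set size, with the only cosmetic difference that you carry the ratio $\ShearerCriticalFunction{\Graph{U}}(\vec{p})/\ShearerCriticalFunction{\Graph{W}}(\vec{p})$ through an induction on $\Cardinality{U}$, whereas the paper inducts on the conditioning set and telescopes that ratio into one-vertex open extension probabilities. (One slip of wording: the implication $\ShearerCriticalFunction{\Graph{W}}(\vec{p})>0\Then\Proba(Z_W=\vec{1})>0$ comes from the instance of $(\star)$ with inner set $\emptyset$, not from ``specialising to $U=W$'', but this does not affect the argument.)
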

It is the cost of isolating $0$s, that drives and is equivalent to the above minimality.\\

If $G$ is \emph{infinite} define
\begin{equation}\label{eq:shearerParametersInfinite}
	\PShearerSet{G}:=
		\bigcap_{E'\subseteq E,\Cardinality{E'}<\infty}
		\PShearerSet{(V,E')}
	\qquad\text{ and }\qquad
	\PShearerSetInterior{G}:=
		\bigcap_{E'\subseteq E,\Cardinality{E'}<\infty}
		\PShearerSetInterior{(V,E')}\,.
\end{equation}
This is well defined \cite[(8.4)]{ScottSokal_repulsive}. The set $\PShearerSetInterior{G}$ is not the interior of the closed set $\PShearerSet{G}$ (discussed in detail in \cite[theorem 8.1]{ScottSokal_repulsive}). For $\vec{p}\in\PShearerSet{G}$ the family of marginals $\Set{\ShearerMeasure{\Graph{W}}{p}:W\subsetneq V,W\text{ finite}}$ forms a consistent family à la \NameKolmogorov{} \cite[(36.1) \& (36.2)]{Billingsley_probability}. Hence \NameKolmogorov{}'s existence theorem \cite[theorem 36.2]{Billingsley_probability} establishes the existence of an extension of this family, which we call $\ShearerMeasure{G}{\vec{p}}$. The $\pi$-$\lambda$ theorem \cite[theorem 3.3]{Billingsley_probability} asserts the uniqueness of this extension. Furthermore $\ShearerMeasure{G}{\vec{p}}$ has all the properties listed in \eqref{eq:shearerCharacterizationProbability} on the infinite graph $G$. Conversely let $\nu$ be a probability measure having the properties \eqref{eq:shearerCharacterizationProbability}. Then all its finite marginals have them, too, and they coincide with \NameShearersMeasure{}. Hence by the uniqueness of the \NameKolmogorov{} extension $\nu$ coincides with $\ShearerMeasure{G}{\vec{p}}$ and \eqref{eq:shearerCharacterizationProbability} \emph{characterizes} $\ShearerMeasure{G}{\vec{p}}$ also on infinite graphs.

\section{Main results and discussion}
\label{sec:mainResultsAndDiscussion}
Our main result is

\begin{Thm}\label{thm:shearerDominationEquivalenceInhomogeneous}
For every locally finite graph $G$, we have
\begin{equation}\label{eq:shearerParametersEqualsDominationParameterSets}
	\PDominationSet{\DependencyClassWeak{G}}
	=\PUniformDominationSet{\DependencyClassWeak{G}}
	=\PDominationSet{\DependencyClassStrong{G}}
	=\PUniformDominationSet{\DependencyClassStrong{G}}
	=\PShearerSetInterior{G}\,.
\end{equation}
\end{Thm}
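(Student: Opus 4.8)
The strategy is to prove a chain of inclusions around the ``diamond'' depicted in the commuting diagrams defined earlier, establishing each of the four parameter sets equals $\PShearerSetInterior{G}$. Because of the obvious inclusions \eqref{eq:dominatedParametersInclusion} and the containment $\DependencyClassStrong{G}(\vec{p})\subseteq\DependencyClassWeak{G}(\vec{p})$ from \eqref{eq:strongIsWeak}, it suffices to prove two things: first, the ``easy'' direction $\PShearerSetInterior{G}\subseteq\PUniformDominationSet{\DependencyClassWeak{G}}$ (labelled (UD)), which forces all four sets to contain $\PShearerSetInterior{G}$; and second, the ``hard'' direction $\PDominationSet{\DependencyClassStrong{G}}\subseteq\PShearerSetInterior{G}$ (labelled (ND)), equivalently its contrapositive: if $\vec{p}\notin\PShearerSetInterior{G}$ then there exists a BRF $Y\in\DependencyClassStrong{G}(\vec{p})$ dominating no non-trivial BPF. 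Since all intermediate sets are squeezed between these two, equality propagates everywhere.

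For (UD), I would use the uniform minimality of Shearer's measure (Lemma~\ref{lem:shearerMinimality}). Fix $\vec{p}\in\PShearerSetInterior{G}$; then $\ShearerCriticalFunction{\Graph{W}}(\vec{p})>0$ for every finite $W$. The plan is to exhibit an explicit non-trivial $\vec{c}\StrictlyGreater\vec{0}$ — built coordinatewise from the ``one-vertex-out-of-everything'' conditional probabilities $\ShearerMeasure{G}{\vec{p}}(Y_v=1\mid Y_{V\setminus\Set{v}}=\vec{1})$ or a suitable uniform lower bound thereof coming from the fundamental identity \eqref{eq:shearerCriticalFunctionFundamentalIdentity} — and to show $\BernoulliProductField{\vec{c}}{V}\Stochastically{\le}Y$ for every $Y\in\DependencyClassWeak{G}(\vec{p})$. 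Stochastic domination of a product field is checked via a one-site-at-a-time coupling: order the vertices and reveal coordinates sequentially, using Lemma~\ref{lem:shearerMinimality} (in particular the conditional bound \eqref{eq:shearerMinimalityOVOEP}) to guarantee that at each step the conditional probability that $Y_v=1$ given the already-revealed coordinates is at least $c_v$, uniformly over the class. The ratio form $\ShearerCriticalFunction{\Graph{U}}(\vec{p})/\ShearerCriticalFunction{\Graph{W}}(\vec{p})$ is exactly what controls these conditional probabilities, so positivity of all $\ShearerCriticalFunction{\Graph{W}}(\vec{p})$ yields a strictly positive lower bound; a compactness/monotone-limit argument passes from finite $G$ to locally finite $G$.

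For (ND), the idea is to use Shearer's measure itself — or rather a family of finite Shearer measures glued together — as the obstruction, exactly as \NameLiggettSchonmannStacey{} did on $\FuzzKZ$. If $\vec{p}\notin\PShearerSetInterior{G}$, then by \eqref{eq:shearerParametersInterior} and \eqref{eq:shearerParametersInfinite} there is a finite subgraph $H=\Graph{W}$ with $\ShearerCriticalFunction{H}(\vec{p})\le 0$; take a minimal such $W$, so that $\ShearerCriticalFunction{\Graph{W'}}(\vec{p})>0$ for all $W'\subsetneq W$ but $\ShearerCriticalFunction{\Graph{W}}(\vec{p})\le 0$. On this minimal $W$ the measure $\ShearerMeasure{\Graph{W}}{\vec{p}}$ is still a (sub-)probability object whose defining property forces $\Proba(Y_W=\vec{1})=\ShearerCriticalFunction{\Graph{W}}(\vec{p})\le 0$ — contradiction — so instead one works just below criticality: take $\vec{p}\,'\StrictlyGreater\vec{p}$ slightly, build $\ShearerMeasure{\Graph{W}}{\vec{p}\,'}$, which is a genuine probability measure with a vanishingly small probability of the all-ones event on $W$, extend it to a BRF $Y$ on all of $G$ in $\DependencyClassStrong{G}(\vec{p}\,')$ (by taking a product of independent copies over a tiling of $V$ by translates/copies of $W$, padded with independent Bernoulli$(p_v)$ sites — this keeps $G$ a strong dependency graph), and observe that if $\BernoulliProductField{\vec{c}}{V}\Stochastically{\le}Y$ with $\vec{c}\StrictlyGreater\vec{0}$ then $\prod_{v\in W}c_v\le\Proba(Y_W=\vec{1})$ can be made arbitrarily small — forcing some $c_v=0$. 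Running this for a sequence $\vec{p}\,'\downarrow\vec{p}$ and taking a weak limit produces a single $Y\in\DependencyClassStrong{G}(\vec{p})$ dominating no non-trivial BPF. The technical care needed is in the extension/tiling step (ensuring the glued field genuinely has $G$ as a strong dependency graph and the right marginals) and in the limiting argument (compactness of $\Configurations{V}$ gives a convergent subsequence, but one must check the limit lies in $\DependencyClassStrong{G}(\vec{p})$ and still dominates nothing non-trivial, using that $\DominatedVectors{\cdot}$ is closed).

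\textbf{Main obstacle.} The delicate part is the (ND) direction, specifically the interplay between the minimality of the ``bad'' finite subgraph, the need to approach criticality from above rather than sit on it, and the gluing that upgrades a finite obstruction to a BRF on the whole locally finite graph $G$ while preserving membership in $\DependencyClassStrong{G}(\vec{p})$; handling graphs $G$ that are not vertex-transitive (so $W$ cannot simply be translated) requires instead a disjoint-union / independent-blocks construction and a careful check of \eqref{eq:dependencyGraphStrong}. The (UD) direction is comparatively routine given Lemma~\ref{lem:shearerMinimality}, the only subtlety being the uniform (in the class and in $v$) positive lower bound on $\vec{c}$ and the passage to infinite $G$.
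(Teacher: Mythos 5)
Your overall architecture (the diamond of inclusions, reducing everything to the two directions (UD) and (ND)) is exactly the paper's, but both key steps as you sketch them have genuine gaps. For (UD): Lemma \ref{lem:shearerMinimality} only bounds conditional probabilities of the form ``open on $U$ given open on $W$'', i.e.\ conditioned on \emph{all-ones} events. A sequential coupling via Proposition \ref{prop:russoInhomogeneousExtension} needs a uniform lower bound on $\Proba(Y_v=1\mid Y_W=\vec{s}_W)$ for \emph{arbitrary} $\vec{s}_W$, and for a general $Y\in\DependencyClassWeak{G}(\vec{p})$ this can be zero: on a single edge $\Set{u,v}$ the field with $Y_u=Y_v$ and $\Proba(Y_v=1)=p$ lies in the weak class (the weak condition only constrains conditioning outside $\NeighboursIncluded{v}$), yet $\Proba(Y_v=1\mid Y_u=0)=0$. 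So revealing coordinates of $Y$ itself does not work. The paper's missing ingredient is the auxiliary perturbation: take $X$ distributed as $\BernoulliProductField{\vec{q}}{V}$ independent of $Y$, set $Z:=X\land Y$, and prove by induction (Proposition \ref{prop:dominationConditionalMinoration}) that $\Proba(Z_v=1\mid Z_W=\vec{s}_W)\ge q_v\,\ShearerOVOEP{W}{v}(\vec{p})$, the zeros in $\vec{s}_W$ being ``blamed'' on $X$. Moreover, for infinite $G$ one needs the escaping-pair bound \eqref{eq:shearerOVOEPBoundsBelow} together with an ordering of each finite $W$ in which every pair $(W_i,v_i)$ has an escape, because non-escaping $\ShearerOVOEP{W}{v}(\vec{p})$ are not bounded away from $0$ near the boundary of $\PShearerSetInterior{G}$; the finite case is handled separately by comparing only $f(\vec{0})$ and $f(\vec{1})$ against $\ShearerCriticalFunction{G}(\vec{p})$.

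For (ND): your limiting scheme $\vec{p}\,'\downarrow\vec{p}$ cannot be run. If $\vec{p}$ lies strictly outside $\PShearerSet{\Graph{W}}$, then $\ShearerMeasure{\Graph{W}}{\vec{p}\,'}$ exists only for $\vec{p}\,'$ at or beyond the point $\vec{r}$ where the segment $[\vec{p},\vec{1}]$ meets the boundary, so the admissible parameters do not approach $\vec{p}$, and any weak limit is Shearer's measure at $\vec{r}$, a field with marginal vector $\vec{r}\neq\vec{p}$; it is not in $\DependencyClassStrong{G}(\vec{p})$ and therefore says nothing about $\vec{p}\in\PDominationSet{\DependencyClassStrong{G}}$ (note also that the dominated vector may depend on the individual BRF, so a family of fields with small but positive all-ones probability proves nothing). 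The paper's Lemma \ref{lem:zeroProbabilityBelow} avoids the limit entirely: sit exactly on the boundary, where $\Proba(Y_W=\vec{1})=\ShearerCriticalFunction{\Graph{W}}(\vec{r})=0$, let $\vec{x}$ solve $\vec{p}=\vec{x}\,\vec{r}$, take $X$ distributed as $\BernoulliProductField{\vec{x}}{W}$ independent of $Y$, and set $Z:=Y\land X$; this thinning has marginals exactly $\vec{p}$, keeps $\Graph{W}$ as strong dependency graph, and still gives $\Proba(Z_W=\vec{1})=0$. Extending by an independent BPF on $V\setminus W$ (no tiling is needed) yields the single counterexample in $\DependencyClassStrong{G}(\vec{p})$. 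With these two repairs your outline coincides with the paper's proof.
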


Its proof is in section \ref{sec:proof}. Theorem \ref{thm:shearerDominationEquivalenceInhomogeneous} consists of two a priori unrelated statements: The first one consists of the left three equalities in \eqref{eq:shearerParametersEqualsDominationParameterSets}: uniform and non-uniform domination of a non-trivial BPF are the same, and even taking the smaller class $\DependencyClassStrong{G}$ does not admit more $\vec{p}$. The second one is that these sets are equivalent to the set of parameters for which \NameShearersMeasure{} exists. The minimality of \NameShearersMeasure{} (see lemma \ref{lem:shearerMinimality}) lets us construct BRFs dominating only trivial BPFs for $\vec{p}\not\in\PShearerSetInterior{G}$ (see section \ref{sec:nondomination}) and clarifies the role \NameShearersMeasure{} played as a counterexample in the work of \NameLiggettSchonmannStacey{} \cite[section 2]{LiggettSchonmannStacey_domination}. Even more, this minimality implies an explicit lower bound for the non-trivial uniform dominated vector:

\begin{Thm}\label{thm:uniformlyDominatedVector}
For $\vec{p}\in\PShearerSetInterior{G}$, define the vector $\vec{c}$ component-wise by
\begin{subnumcases}{\label{eq:uniformlyDominatedVector}c_v:=}
	\label{eq:uniformlyDominatedVectorTrival}
	1
	&if $p_v=1$
	\\\label{eq:uniformlyDominatedVectorFinite}
	1-\left(1-\ShearerCriticalFunction{G_v}(\vec{p})\right)^{1/\Cardinality{V_v}}
	&if $p_v<1$ and $\Cardinality{V_v}<\infty$
	\\\label{eq:uniformlyDominatedVectorInfinite}
	q_v\min\Set{q_w:w\in\Neighbours{v}\cap V_v}
	&if $p_v<1$ and $\Cardinality{V_v}=\infty$,
\end{subnumcases}
where $V_v$ are the vertices of the connected component of $v$ in the subgraph of $G$ induced by all vertices $v$ with $p_v<1$. Then $\vec{0}\StrictlyLess\vec{c}\in\DominatedVectors{\DependencyClassWeak{G}(\vec{p})}$.
\end{Thm}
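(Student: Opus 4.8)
The plan is to reduce the statement to a single connected graph $G$ all of whose vertices satisfy $p_v<1$, and then to apply Strassen's theorem, i.e.\ the up-set criterion for stochastic domination, feeding it the lower bound on $\Proba(Z_W=\vec 1)$ from lemma~\ref{lem:shearerMinimality} (and, for infinite components, a complementary lower bound on $\Proba(Z_W\neq\vec 0)$). First I would dispose of the bookkeeping. A vertex with $p_v=1$ has $Z_v=1$ $\AlmostSureLy$ for every $Z\in\DependencyClassWeak{G}(\vec p)$, so it is matched deterministically by $c_v=1$ and may be deleted. Write $\Set{v:p_v<1}$ as the disjoint union of the connected components $V_\alpha$ of the induced subgraph; since every neighbour of a vertex of $V_\alpha$ lies in $V_\alpha$ or has $p=1$, conditioning $Z$ successively on the blocks $Z_{V_{\alpha_1}},Z_{V_{\alpha_2}},\dots$ leaves the conditional law of $Z_{V_{\alpha_k}}$ in $\DependencyClassWeak{\Graph{V_{\alpha_k}}}(\vec p_{V_{\alpha_k}})$, because passing to a coarser conditioning $\sigma$-algebra keeps the bound $\Proba(Z_v=1\mid\cdot)\ge p_v$ (and $\NeighboursIncluded{v}$ in $\Graph{V_{\alpha_k}}$ agrees with $\NeighboursIncluded{v}$ in $G$ on $V_{\alpha_k}$). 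As the target product field on each block has a fixed parameter vector independent of the blocks revealed so far, a coupling assembled block by block automatically has mutually independent blocks, so it suffices to treat one component $G$ with all $p_v<1$ and $\vec p\in\PShearerSetInterior{G}$. Finally, $Z\Stochastically{\ge}X$ holds iff $Z_W\Stochastically{\ge}X_W$ for every finite $W$, so in the infinite case I may restrict to finite $W\subseteq V$, where by the same reasoning $Z_W\in\DependencyClassWeak{\Graph{W}}(\vec p_W)$ and $\vec p_W\in\PShearerSetInterior{\Graph{W}}$.

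The finite connected case is the heart of the matter. Put $n:=\Cardinality{V}$ and $c:=1-(1-\ShearerCriticalFunction{G}(\vec p))^{1/n}$; then $c\in(0,1)$, since $\ShearerCriticalFunction{G}(\vec p)>0$ by $\vec p\in\PShearerSetInterior{G}$ and \eqref{eq:shearerParametersInterior}, while $\ShearerCriticalFunction{G}(\vec p)=\ShearerMeasure{G}{\vec p}(Y_V=\vec 1)<1$ because each marginal $\ShearerMeasure{G}{\vec p}(Y_v=1)=p_v<1$. By Strassen's theorem it suffices to verify $\Proba(Z\in A)\ge\BernoulliProductField{c}{V}(A)$ for every up-set $A\subseteq\Configurations{V}$ and every $Z\in\DependencyClassWeak{G}(\vec p)$. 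If $A=\Configurations{V}$ both sides equal $1$. Otherwise $\vec 0\notin A$, hence $A$ is contained in the up-set $\Set{\vec x\neq\vec 0}$, so $\BernoulliProductField{c}{V}(A)\le\BernoulliProductField{c}{V}(\Set{\vec x\neq\vec 0})=1-(1-c)^n=\ShearerCriticalFunction{G}(\vec p)$; on the other hand every non-empty up-set contains $\vec 1$, so lemma~\ref{lem:shearerMinimality} applied with $W=V$ gives $\Proba(Z\in A)\ge\Proba(Z_V=\vec 1)\ge\ShearerCriticalFunction{\Graph{V}}(\vec p)=\ShearerCriticalFunction{G}(\vec p)$. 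Combining the two inequalities proves $Z\Stochastically{\ge}\BernoulliProductField{c}{V}$, i.e.\ $\vec 0\StrictlyLess c\vec 1\in\DominatedVectors{\DependencyClassWeak{G}(\vec p)}$, which is exactly \eqref{eq:uniformlyDominatedVectorFinite} (and \eqref{eq:uniformlyDominatedVectorTrival} for the deleted vertices).

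For the infinite connected case one again runs Strassen's criterion on each finite window $W$, but the shortcut above breaks down, since $\BernoulliProductField{\vec c_W}{W}(\Set{\vec x\neq\vec 0})$ may be arbitrarily close to $1$ whereas $\Proba(Z_W=\vec 1)$ is only bounded below by $\ShearerCriticalFunction{\Graph{W}}(\vec p)$, which decays in $W$. This is where the value $c_v:=q_v\min\Set{q_u:u\in\Neighbours{v}\cap V}$ is calibrated to a second family of estimates: besides lemma~\ref{lem:shearerMinimality}, which still controls the ``near $\vec 1$'' up-sets $\Set{\vec x_T=\vec 1}$, one uses that for any $S\subseteq W$ and any maximal independent subset $S_0$ of $\Graph{S}$, iterating $\Proba(Z_u=1\mid Z_{V\setminus\NeighboursIncluded{u}})\ge p_u$ along the pairwise non-adjacent vertices of $S_0$ yields $\Proba(Z_S=\vec 0)\le\Proba(Z_{S_0}=\vec 0)\le\prod_{u\in S_0}q_u$. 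Writing an arbitrary up-set via its complementary down-set, $A=\Configurations{W}\setminus\bigcup_j\Set{\vec x:\vec x_{S_j}=\vec 0}$ with the $S_j$ an antichain, one verifies $\Proba(Z_W\in A)\ge\BernoulliProductField{\vec c_W}{W}(A)$ by balancing these two families of bounds; the product form $c_v=q_vq_{u(v)}$ is exactly what keeps the overhead factors $1-c_w$ (for $w\in S_j\setminus S_0$) from spoiling the independent-set estimate, using $q_v(1+q_{u(v)})<1$, which follows from $q_v+q_{u(v)}<1$. Since $v$ has a neighbour in its (infinite) component and $p_v<1$, every $c_v>0$; letting $W\uparrow V$ and reassembling over components as in the reductions gives $\vec 0\StrictlyLess\vec c\in\DominatedVectors{\DependencyClassWeak{G}(\vec p)}$ in general. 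The step I expect to be the main obstacle is precisely this last up-set accounting: in the finite case the single up-set $\Set{\vec x=\vec 1}$ dominates all the others, whereas in the infinite case one must show that the two quantitatively different lower bounds --- the one from lemma~\ref{lem:shearerMinimality} and the independent-set one --- jointly majorise $\BernoulliProductField{\vec c_W}{W}(A)$ over all up-sets $A$ simultaneously, and it is this requirement that pins down the precise shape of $c_v$ for infinite components.
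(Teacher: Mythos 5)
Your reductions (deleting $p_v=1$ vertices, decomposing into components, restricting to finite windows via lemma \ref{lem:dominationAndSubfields}) are sound, and your finite-component argument is correct: it is the paper's proposition \ref{prop:shearerImpliesUniformDominationFinite} rephrased through Strassen's criterion, resting on the same two facts, namely $\Proba(Z_V=\vec{1})\ge\ShearerCriticalFunction{G}(\vec{p})$ from lemma \ref{lem:shearerMinimality} and the calibration $(1-c)^{\Cardinality{V}}=1-\ShearerCriticalFunction{G}(\vec{p})$.

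The infinite case, however, is where the theorem has its substance, and there your proposal has a genuine gap --- one you flag yourself as ``the main obstacle''. Verifying Strassen's criterion for \emph{all} up-sets of a finite window from the two marginal families you invoke (lower bounds on $\Proba(Z_T=\vec{1})$ from lemma \ref{lem:shearerMinimality}, and upper bounds $\Proba(Z_S=\vec{0})\le\prod_{u\in S_0}q_u$ over independent sets) cannot be carried out as sketched: the complement of a general up-set is a union of possibly exponentially many maximal cylinders $\Set{\vec{x}_{S_j}=\vec{0}}$, so the implicit union bound is far too lossy, and neither family says anything about the joint behaviour of $Z$ on overlapping blocks (consider up-sets of the form $\bigcap_j\Set{\vec{x}_{S_j}\neq\vec{0}}$). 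The ``balancing'' you defer is not a computation one can postpone --- it is the entire difficulty, and no calibration of $c_v$ repairs an argument that only uses these marginal bounds. The paper proceeds quite differently: it perturbs $Y$ by an independent $\BernoulliProductField{\vec{q}}{V}$-distributed field $X$ and works with $Z:=Y\land X$, which makes it possible to prove (proposition \ref{prop:dominationConditionalMinoration}, by induction over $\Cardinality{W}$ using the fundamental identity \eqref{eq:shearerOVOEPFundamentalIdentity}) the uniform \emph{conditional} bound $\Proba(Z_v=1|Z_W=\vec{s}_W)\ge q_v\ShearerOVOEP{W}{v}(\vec{p})$ for \emph{arbitrary} configurations $\vec{s}_W$, zeros included --- the auxiliary field exists precisely to absorb adjacent zeros in the conditioning. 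It then enumerates the window $W$ so that every pair $(W_i,v_i)$ is escaping (possible since the component is infinite and connected), invokes $\ShearerOVOEP{W_i}{v_i}(\vec{p})\ge q_{w_i}$ for escaping pairs \eqref{eq:shearerOVOEPBoundsBelow}, and feeds the resulting bound $q_{v_i}q_{w_i}\ge c_{v_i}$ into the sequential coupling of proposition \ref{prop:russoInhomogeneousExtension}, finishing with $Y_W\Stochastically{\ge}Z_W\Stochastically{\ge}\BernoulliProductField{\vec{c}_W}{W}$. The auxiliary perturbation, the conditional bound valid under conditioning on configurations containing zeros, and the escaping enumeration are exactly the ingredients missing from your sketch, and without some substitute for them the infinite case remains unproved.
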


The proof of theorem \ref{thm:uniformlyDominatedVector} is in section \ref{sec:domination}. For infinite, connected $G$ we have a \emph{discontinuous transition} in $\vec{c}$ as $\vec{p}$ approaches the boundary of $\PShearerSetInterior{G}$ \eqref{eq:uniformlyDominatedVectorInfinite}, while in the finite case it is continuous \eqref{eq:uniformlyDominatedVectorFinite}. On the other hand there are classes of BRFs having a continuous transition also in the infinite case, for example the class of $2$-factors on $\IntNum$ \cite[theorem 3.0]{LiggettSchonmannStacey_domination}.\\

Our proof trades accuracy in capturing all of $\PShearerSetInterior{G}$ against accuracy in the lower bound for the parameter of the dominated BPF. Intuitively it is clear, that $\DominatedVectors{\DependencyClassWeak{G}(\vec{p})}$ should increase with $\vec{p}$ \eqref{eq:dominatedParameterMonotonicity}, but our explicit lower bound \eqref{eq:uniformlyDominatedVectorInfinite} decreases in $\vec{p}$. There is an explicit growing lower bound already shown by \NameLiggettSchonmannStacey{} \cite[corollary 1.4]{LiggettSchonmannStacey_domination}, although only on a restricted set of parameters \eqref{eq:LLLClassicalCondition}.\\

Equation \eqref{eq:shearerMinimalityCriticalFunction} does not imply, that $\ShearerMeasure{G}{\vec{p}}\Stochastically{\le}Y$ for all $Y\in\DependencyClassWeak{G}(\vec{p})$: for a finite $W\subsetneq V$ take $f:=1-\Indicator{\Set{\vec{0}}}\in\MonotoneFunctionsOn{W}$ and see that $\BernoulliProductField{\vec{p}}{W}\Stochastically{\not\ge}\ShearerMeasure{\Graph{W}}{\vec{p}}$. Furthermore $\DominatedVectors{\ShearerMeasure{G}{\vec{p}}}$ is neither minimal nor maximal (with respect to set inclusion) in the class $\DependencyClassWeak{G}(\vec{p})$. The maximal law is $\BernoulliProductField{\vec{p}}{W}$ itself, as $[\vec{0},\vec{p}]=\DominatedVectors{\BernoulliProductField{\vec{p}}{W}}$. We give a counterexample to the minimality of $\DominatedVectors{\ShearerMeasure{G}{\vec{p}}}$ in section \ref{sec:asymptoticJumpSizeKFuzzZ}.
\subsection{Reinterpretation of bounds}
\label{sec:reinterpretationOfBounds}
Theorem \ref{thm:shearerDominationEquivalenceInhomogeneous} allows the application of conditions for admissible $\vec{p}$ for $\PUniformDominationSet{\DependencyClassWeak{G}}$ to $\PShearerSetInterior{G}$ and vice-versa. Hence we can play questions about the existence of a BRF dominating only trivial BPFs or the existence of \NameShearersMeasure{} back and forth. In the following we list known necessary or sufficient conditions for $\vec{p}$ to lie in $\PShearerSetInterior{G}$, most of them previously unknown for the domination problem. We assume that $G$ contains no isolated vertices. The classical sufficient condition for the existence of \NameShearersMeasure{} has been established independently several times and is known as either the ``\NameLovaszLocalLemma{}'' \cite{ErdosLovasz_hypergraphs} in graph theory or the ``\NameDobrushin{} condition'' \cite[theorem 6.1]{Dobrushin_perturbation} in statistical mechanics:

\begin{Thm}[{version of \cite[(2.13)]{FernandezProcacci_cluster}}]
\label{thm:LLLClassical}
Let $\vec{p}\in[0,1]^V$. If there exists $\vec{s}\in]0,\infty[^V$ such that
\begin{equation}\label{eq:LLLClassicalCondition}
	\ForAll v\in V:\quad
	q_v\,\prod_{w\in\NeighboursIncluded{v}} (1+s_w)\le s_v\,,
\end{equation}
then $\vec{p}\in\PShearerSetInterior{G}$.
\end{Thm}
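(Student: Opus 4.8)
The plan is to reduce the statement to finite graphs and then prove, by induction on the number of vertices, that \NameShearer{}'s critical function is strictly positive on every finite subgraph of $G$; for the induction to close one carries along an auxiliary bound on single-vertex deletion ratios, the engine being the deletion--contraction identity \eqref{eq:shearerCriticalFunctionFundamentalIdentity}. Two elementary observations come first. Keeping only the factor $w=v$ in \eqref{eq:LLLClassicalCondition} (the remaining ones are $\ge1$) gives $q_v(1+s_v)\le s_v$, so $q_v<1$ and $p_v>0$ for every $v\in V$. And \eqref{eq:LLLClassicalCondition} is inherited by every subgraph $H=(W,E'')$ of $G$ with the restricted witness $\vec{s}_W$, since the neighbourhood $\NeighboursIncluded{v}$ computed in $H$ is contained in the one computed in $G$ while every factor $1+s_w\ge1$. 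By \eqref{eq:shearerParametersInterior} (for $G$ finite) and by \eqref{eq:shearerParametersInfinite} together with \eqref{eq:shearerParametersInterior} (for $G$ infinite, where each finite $E'$ meets only finitely many vertices and the remaining isolated vertices contribute the now-known positive factors $p_v$), the inclusion $\vec{p}\in\PShearerSetInterior{G}$ follows as soon as $\ShearerCriticalFunction{H}(\vec{p})>0$ for every finite subgraph $H$ of $G$; by the inheritance just noted, this in turn reduces to the single claim that $\ShearerCriticalFunction{G}(\vec{p})>0$ for every finite $G$ satisfying \eqref{eq:LLLClassicalCondition}.

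This claim I would prove, strengthened by the auxiliary bound, by induction on $\Cardinality{V}$: for a finite graph $G=(V,E)$ and a witness $\vec{s}\in]0,\infty[^V$ of \eqref{eq:LLLClassicalCondition} one has \emph{(i)} $\ShearerCriticalFunction{G}(\vec{p})>0$, and \emph{(ii)} $\ShearerCriticalFunction{\Graph{V\setminus\Set{v}}}(\vec{p})\le(1+s_v)\,\ShearerCriticalFunction{G}(\vec{p})$ for every $v\in V$. The base case $V=\emptyset$ holds with $\ShearerCriticalFunction{G}(\vec{p})=1$. For the step, fix $v\in V$, write $\Neighbours{v}=\Set{w_1,\dots,w_d}$, and set $H_0:=\Graph{V\setminus\Set{v}}$ and $H_i:=\Graph{V\setminus\Set{v,w_1,\dots,w_i}}$, so that $H_d=\Graph{V\setminus\NeighboursIncluded{v}}$. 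Each $H_i$ is a proper induced subgraph of $G$, hence satisfies \eqref{eq:LLLClassicalCondition}, and the induction hypothesis applies to it: by \emph{(i)} every $\ShearerCriticalFunction{H_i}(\vec{p})$ is positive, and by \emph{(ii)} applied to $H_{i-1}$ and $w_i$ one has $\ShearerCriticalFunction{H_i}(\vec{p})\le(1+s_{w_i})\,\ShearerCriticalFunction{H_{i-1}}(\vec{p})$; multiplying these $d$ inequalities between positive numbers yields $\ShearerCriticalFunction{\Graph{V\setminus\NeighboursIncluded{v}}}(\vec{p})\le\big(\prod_{w\in\Neighbours{v}}(1+s_w)\big)\ShearerCriticalFunction{\Graph{V\setminus\Set{v}}}(\vec{p})$.

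Inserting this into the fundamental identity \eqref{eq:shearerCriticalFunctionFundamentalIdentity} and using \eqref{eq:LLLClassicalCondition} in the form $q_v\prod_{w\in\Neighbours{v}}(1+s_w)\le s_v/(1+s_v)$, one obtains
\[
	\frac{\ShearerCriticalFunction{G}(\vec{p})}{\ShearerCriticalFunction{\Graph{V\setminus\Set{v}}}(\vec{p})}
	=1-q_v\,\frac{\ShearerCriticalFunction{\Graph{V\setminus\NeighboursIncluded{v}}}(\vec{p})}{\ShearerCriticalFunction{\Graph{V\setminus\Set{v}}}(\vec{p})}
	\ge 1-q_v\prod_{w\in\Neighbours{v}}(1+s_w)
	\ge 1-\frac{s_v}{1+s_v}
	=\frac{1}{1+s_v}>0\,.
\]
Since $\ShearerCriticalFunction{\Graph{V\setminus\Set{v}}}(\vec{p})>0$ by the induction hypothesis, this simultaneously gives $\ShearerCriticalFunction{G}(\vec{p})>0$, which is \emph{(i)}, and $\ShearerCriticalFunction{\Graph{V\setminus\Set{v}}}(\vec{p})\le(1+s_v)\,\ShearerCriticalFunction{G}(\vec{p})$, which is \emph{(ii)} for the chosen $v$; as $v\in V$ was arbitrary, the induction — and with it the proof — closes.

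I do not expect a genuine obstacle: this is the classical \NameLovaszLocalLemma{}/\NameDobrushin{}-condition argument phrased in terms of \NameShearer{}'s independent-set polynomial. The one nonroutine point is the choice of the strengthened induction hypothesis — keeping the deletion-ratio bound \emph{(ii)} in play alongside the positivity \emph{(i)}, without which the subtracted term $q_v\,\ShearerCriticalFunction{\Graph{V\setminus\NeighboursIncluded{v}}}(\vec{p})$ in \eqref{eq:shearerCriticalFunctionFundamentalIdentity} cannot be bounded. Everything else (the telescoping over $\Neighbours{v}$, the identity $1-\tfrac{s_v}{1+s_v}=\tfrac1{1+s_v}$, and the finite-to-infinite reduction via \eqref{eq:shearerParametersInfinite}) is bookkeeping.
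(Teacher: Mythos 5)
Your proof is correct. A point of comparison first: the paper does not prove Theorem \ref{thm:LLLClassical} at all --- it imports it as a known result (a version of Fern\'andez--Procacci (2.13), i.e.\ the classical LLL/Dobrushin condition), so there is no in-paper argument to match yours against. What you supply is the standard inductive proof in Shearer-polynomial form, and it is sound: the inheritance of \eqref{eq:LLLClassicalCondition} by subgraphs, the reduction of $\vec{p}\in\PShearerSetInterior{G}$ to positivity of $\ShearerCriticalFunction{H}(\vec{p})$ on all finite subgraphs (with the isolated-vertex bookkeeping for the infinite case via \eqref{eq:shearerParametersInfinite}), and the strengthened induction (i)+(ii) with the telescoping over $\Neighbours{v}$ all check out; note only that your step uses the hypothesis for \emph{all} smaller vertex counts (the $H_i$ have various cardinalities), so it is strong induction on $\Cardinality{V}$, which is harmless. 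It is worth observing that your auxiliary bound (ii) is exactly a lower bound on a one-vertex open extension probability, $\ShearerOVOEP{V\setminus\Set{v}}{v}(\vec{p})=\ShearerCriticalFunction{G}(\vec{p})/\ShearerCriticalFunction{\Graph{V\setminus\Set{v}}}(\vec{p})\ge 1/(1+s_v)$, i.e.\ with $r_v:=s_v/(1+s_v)$ you are running the same ratio induction that the paper itself uses (via \eqref{eq:shearerOVOEPFundamentalIdentity}) in its proof of the stronger homogeneous bound, Theorem \ref{thm:LLLHomogeneousEscaping}; the paper's version additionally exploits ``escaping'' pairs to get a bound uniform in $W$ on infinite graphs, which your argument does not need since you only require positivity on finite subgraphs. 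So: correct, complete in the essentials, and consistent with the paper's machinery, just written multiplicatively in terms of $\ShearerCriticalFunction{}$ rather than in terms of the OVOEPs.
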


In the homogeneous case there has been again a parallel and independent improvement on theorem \ref{thm:LLLClassical} by \NameLiggettSchonmannStacey{} in probability theory and \NameScottSokal{} in statistical mechanics. Here $\PShearer{G}$ is identified with the endpoint of the interval $[\PShearer{G},1]$ corresponding to $\PShearerSetInterior{G}$.

\begin{Thm}[{\cite[theorem 1.3]{LiggettSchonmannStacey_domination}, \cite[corollary 5.7]{ScottSokal_repulsive}}]
\label{thm:LLLHomogeneousEscaping}
If $G$ is uniformly bounded with degree $D$, then
\begin{equation}\label{eq:LLLHomogeneousEscaping}
	\PShearer{G}\le 1-\PowerFracDual{(D-1)}{D}\,.
\end{equation}
\end{Thm}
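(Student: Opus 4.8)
The plan is to prove the bound $\PShearer{G}\le 1-\PowerFracDual{(D-1)}{D}$ by exhibiting, in the homogeneous setting with $q$ fixed so that $q\le\PowerFracDual{(D-1)}{D}$, a uniform positive lower bound for the critical function $\ShearerCriticalFunction{\Graph{W}}(q)$ over all finite subgraphs $W$ of $G$; by the characterisation \eqref{eq:shearerParametersInterior} and its infinite-graph version \eqref{eq:shearerParametersInfinite}, this places $p=1-q$ in $\PShearerSetInterior{G}$. Since $\PShearerSet{G}$ decreases when adding edges, it suffices to treat the worst case, namely the $D$-regular tree $\mathbb{T}_D$ (more precisely, to bound $\ShearerCriticalFunction{\Graph{W}}$ for $W$ an arbitrary finite induced subgraph, which embeds in a finite subtree of a tree with maximum degree $D$, since removing edges only increases $\Xi$). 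On trees the fundamental identity \eqref{eq:shearerCriticalFunctionFundamentalIdentity} becomes especially tractable because deleting a vertex disconnects the graph into independent pieces, so $\Xi$ factorises over connected components.

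First I would set up the recursion on a finite rooted subtree: for a vertex $v$ with children subtrees $T_1,\dots,T_m$ ($m\le D-1$ away from the root, $m\le D$ at the root), write $T^{(v)}$ for the subtree rooted at $v$ and apply \eqref{eq:shearerCriticalFunctionFundamentalIdentity} at $v$ to get $\Xi_{T^{(v)}} = \prod_i \Xi_{T_i} - q\prod_i \Xi_{T_i'}$, where $T_i'$ is $T_i$ with its root deleted. This invites the standard ratio substitution: set $R_v := \Xi_{T^{(v)}}/\prod_i \Xi_{T_i}$ (the conditional probability, à la lemma \ref{lem:shearerMinimality}, that $v$ is a $1$ given its descendants are all $1$), which satisfies $R_v = 1 - q\prod_i R_{w_i}$ where the product runs over the children $w_i$ of $v$. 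The goal is a uniform bound $R_v \ge \rho$ for some $\rho>0$ depending only on $q$ and $D$; by induction this reduces to showing the map $x\mapsto 1-qx^{D-1}$ (applied to values $\ge\rho$) stays $\ge\rho$, i.e. that $1-q\rho^{D-1}\ge\rho$ has a solution $\rho\in(0,1)$. Optimising $q$ so that this fixed-point equation is exactly tangent (double root) gives the critical $q$: the tangency conditions $1-q\rho^{D-1}=\rho$ and $q(D-1)\rho^{D-2}=1$ solve to $\rho = (D-1)/D$ and $q = \PowerFracDual{(D-1)}{D}$, which is exactly the claimed threshold. For $q$ strictly below this, a genuine (non-degenerate) fixed point $\rho>(D-1)/D$ exists and the induction goes through, with the root contributing one extra factor that is harmless since $\rho>0$.

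The remaining step is to convert the uniform lower bound on the ratios back into a uniform positive lower bound on $\Xi_{\Graph{W}}$ itself, not just on ratios — but multiplying the telescoping ratios over the (bounded-degree, possibly large) tree gives $\Xi_{\Graph{W}} = \prod_{v} R_v \ge \rho^{|W|}$, which is positive but \emph{not} bounded away from $0$ uniformly in $|W|$. This is the main obstacle, and the fix is that \eqref{eq:shearerParametersInterior} only requires $\Xi_{\Graph{H}}(\vec p)>0$ for every finite subgraph $H$, not a uniform bound — so $\rho^{|W|}>0$ already suffices to conclude $p\in\PShearerSetInterior{G}$ via \eqref{eq:shearerParametersInfinite}. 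One must still handle the case $q$ exactly equal to the threshold versus strictly below: the theorem statement asserts $\PShearer{G}\le 1-\PowerFracDual{(D-1)}{D}$, i.e. that every $p > 1-\PowerFracDual{(D-1)}{D}$, equivalently every $q<\PowerFracDual{(D-1)}{D}$, lies in $\PShearerSetInterior{G}$; for such $q$ the fixed-point inequality $1-qx^{D-1}\ge x$ has a solution $\rho\in(0,1)$ by continuity and the intermediate value theorem (at $q=\PowerFracDual{(D-1)}{D}$ it has the tangent root $\rho=(D-1)/D$, and decreasing $q$ pushes the curve $1-qx^{D-1}$ up, creating a transversal crossing), so the argument closes. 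If one also wants the endpoint $q = \PowerFracDual{(D-1)}{D}$ to lie in $\PShearerSet{G}$ (boundary, not interior), the same induction with $\rho=(D-1)/D$ still yields $R_v\ge\rho>0$ and hence $\Xi_{\Graph{W}}>0$ for all finite $W$, but this is not needed for the stated inequality.
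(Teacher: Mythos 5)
Your fixed-point computation (the recursion $x\mapsto 1-qx^{D-1}$, tangency at $\rho=\frac{D-1}{D}$, $q=\PowerFracDual{(D-1)}{D}$, then telescoping to $\ShearerCriticalFunction{\Graph{W}}(p)\ge\rho^{\Cardinality{W}}>0$) is exactly the right calculation, but the step that reduces a general graph of maximum degree $D$ to the tree $\Tree_D$ is a genuine gap. A finite induced subgraph $\Graph{W}$ with cycles does not embed as a subgraph of any tree, so the claimed embedding does not exist; and the monotonicity you invoke points the wrong way. Removing edges (to pass to a spanning forest of $\Graph{W}$) can only \emph{increase} the critical function, i.e.\ it gives $\ShearerCriticalFunction{\text{forest}}\ge\ShearerCriticalFunction{\Graph{W}}$ — and even this holds only on the Shearer region, which is what you are trying to establish — so a positive lower bound on the tree side says nothing about $\ShearerCriticalFunction{\Graph{W}}$ itself. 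In set language: adding edges shrinks $\PShearerSet{\cdot}$, so membership of $p$ in the Shearer set of a forest does not imply membership for the denser graph $\Graph{W}$. The legitimate graph-to-tree transfer is the pruned SAW-tree machinery of \NameScottSokal{} (the route behind theorem \ref{thm:prunedSAWBound}), which is a substantial construction, not the one-line subgraph monotonicity your proposal relies on.

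The repair is to run your induction directly on $G$, which is what the paper does: for every \emph{escaping} pair $(W,v)$ (definition \ref{def:shearerOVOEPEscaping}) show by induction on $\Cardinality{W}$ that $\ShearerOVOEP{W}{v}(p)\ge 1-\frac{1}{D}$ whenever $q\le\PowerFracDual{(D-1)}{D}$. The escape guarantees $v$ has at most $D-1$ neighbours $w_1,\dotsc,w_m$ inside $W$, and in the fundamental identity \eqref{eq:shearerOVOEPFundamentalIdentity} each factor $\ShearerOVOEP{W\setminus\Set{w_i,\dotsc,w_m}}{w_i}(p)$ is again escaping (with escape $v$), so the induction hypothesis applies and the step closes via
\begin{equation*}
	\ShearerOVOEP{W}{v}(p)\ \ge\ 1-\frac{q}{\left(\frac{D-1}{D}\right)^{D-1}}\ \ge\ 1-\frac{1}{D}\,,
\end{equation*}
which is precisely your tangency computation, now valid on arbitrary bounded-degree graphs; telescoping these conditional probabilities then yields $\ShearerCriticalFunction{\Graph{W}}(p)>0$ for all finite $W$ and hence $\PShearer{G}\le 1-\PowerFracDual{(D-1)}{D}$, with no tree reduction needed.
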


This leads to the only two cases of infinite graphs where $\PShearer{G}$ is exactly known, namely the $D$-regular tree $\Tree_D$ with $\PShearer{\Tree_D}=1-\PowerFracDual{(D-1)}{D}$ and $\FuzzKZ$, the $k$-fuzz of $\IntNum$, with $\PShearer{\FuzzKZ}=1-\PowerFracDual{k}{(k+1)}$. The complementary inequality is \cite[before theorem 2]{Shearer_problem} and \cite[corollary 2.2]{LiggettSchonmannStacey_domination} for $\Tree_d$ and $\FuzzKZ$ respectively. In these cases explicit constructions of \NameShearersMeasure{} are possible. See for example the construction as a $(k+1)$-factor in the case of $\FuzzKZ$ \cite[section 4.2]{MathieuTemmel_kindependent}.\\

\NameFernandezProcacci{} derived another more recent and elaborate sufficient condition for a vector $\vec{p}$ to lie in $\PShearerSetInterior{G}$:

\begin{Thm}[{\cite[theorem 1]{FernandezProcacci_cluster}}]
\label{thm:clusterExpansion}
Let $\vec{p}\in[0,1]^V$. If there exists $\vec{s}\in]0,\infty[^V$, such that
\begin{equation}\label{eq:clusterExpansionCondition}
	\ForAll v\in V:\quad
	q_v\,\ShearerCriticalFunction{\Graph{\NeighboursIncluded{v}}}(-\vec{s})\le s_v\,,
\end{equation}
then $\vec{p}\in\PShearerSetInterior{G}$.
\end{Thm}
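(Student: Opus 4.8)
The plan is to reduce the statement to finite graphs and then to read off condition \eqref{eq:clusterExpansionCondition} as precisely the \NameFernandezProcacci{} convergence hypothesis for the cluster (Mayer) expansion of the partition function $\ShearerCriticalFunction{G}$, whence positivity of $\ShearerCriticalFunction{G}(\vec{p})$.

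\medskip\noindent\emph{Reduction to finite $G$.}\ By \eqref{eq:shearerParametersInterior} and \eqref{eq:shearerParametersInfinite}, $\vec{p}\in\PShearerSetInterior{G}$ is equivalent to $\ShearerCriticalFunction{\Graph{W}}(\vec{p})>0$ for every finite $W\subseteq V$, so I fix such a $W$. The hypothesis \eqref{eq:clusterExpansionCondition} descends to $\Graph{W}$: since $\Graph{W}$ is an \emph{induced} subgraph, for $v\in W$ the neighbourhood of $v$ inside $\Graph{W}$ is $\NeighboursIncluded{v}\cap W$, a subset of $\NeighboursIncluded{v}\cap W$ is independent in $\Graph{W}$ iff it is independent in $G$, and $\ShearerCriticalFunction{\Graph{\NeighboursIncluded{v}}}(-\vec{s})$ — the sum of the non-negative quantities $\prod_{w\in T}s_w$ over independent $T\subseteq\NeighboursIncluded{v}$ — only decreases when the index set shrinks. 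Hence I may assume $G$ finite and must prove $\ShearerCriticalFunction{G}(\vec{p})>0$.

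\medskip\noindent\emph{Cluster expansion.}\ By \eqref{eq:shearerCriticalFunction}, $\ShearerCriticalFunction{G}(\vec{p})$ is the grand canonical partition function of the hard-core lattice gas on $G$ at fugacity $-\vec{q}$ (polymers are single vertices, two vertices incompatible iff equal or adjacent, weight of $v$ equal to $-q_v$), and its logarithm has the formal Mayer expansion
\begin{equation*}
	\log\ShearerCriticalFunction{G}(\vec{p})
	=\sum_{n\ge1}\frac{1}{n!}\sum_{(v_1,\dots,v_n)\in V^n}
		\phi^T(v_1,\dots,v_n)\prod_{i=1}^n(-q_{v_i})\,,
\end{equation*}
where the Ursell coefficient $\phi^T(v_1,\dots,v_n)$ depends only on the subgraph of $G$ induced by $\Set{v_1,\dots,v_n}$. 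The cluster weight naturally attached to a vertex $v$ is $\sum_{T\subseteq\NeighboursIncluded{v},\ T\text{ independent}}\prod_{w\in T}s_w=\ShearerCriticalFunction{\Graph{\NeighboursIncluded{v}}}(-\vec{s})$, so \eqref{eq:clusterExpansionCondition}, i.e.\ $q_v\,\ShearerCriticalFunction{\Graph{\NeighboursIncluded{v}}}(-\vec{s})\le s_v$ for all $v$, is exactly the hypothesis of the \NameFernandezProcacci{} convergence theorem \cite[theorem 1]{FernandezProcacci_cluster} — with $\vec{s}$ the free parameter vector — applied at fugacity $-\vec{q}$. Thus the series above converges absolutely; its terms are real, so $\log\ShearerCriticalFunction{G}(\vec{p})$ is a finite real number and therefore $\ShearerCriticalFunction{G}(\vec{p})=\exp\!\bigl(\log\ShearerCriticalFunction{G}(\vec{p})\bigr)>0$, as required.

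\medskip\noindent\emph{The hard part.}\ Everything above is bookkeeping; the substance is the quantitative estimate inside the cited convergence theorem. I would proceed via a \NamePenrose{}/Ruelle tree-graph inequality bounding $\sum_{n\ge1}\tfrac{1}{n!}\sum_{(v_1,\dots,v_n)}\Modulus{\phi^T(v_1,\dots,v_n)}\prod_i q_{v_i}$ by a sum over rooted trees, followed by a resummation in which the admissible ``first generation'' at a vertex is exactly an independent subset of its neighbourhood (optionally with the vertex itself): this is the step where $\ShearerCriticalFunction{\Graph{\NeighboursIncluded{v}}}(-\vec{s})$ appears in place of the coarser factor $\prod_{w\in\NeighboursIncluded{v}}(1+s_w)$ of theorem \ref{thm:LLLClassical}. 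An equivalent, more hands-on variant is an induction on $\Cardinality{V}$ built on the deletion--contraction identity \eqref{eq:shearerCriticalFunctionFundamentalIdentity} that simultaneously proves $\ShearerCriticalFunction{\Graph{W}}(\vec{p})>0$ and bounds the ratio $q_v\,\ShearerCriticalFunction{\Graph{W\setminus\NeighboursIncluded{v}}}(\vec{p})/\ShearerCriticalFunction{\Graph{W}}(\vec{p})$ — a quantity of the kind controlled by \NameShearersMeasure{}'s minimality, lemma \ref{lem:shearerMinimality} — by $s_v$. The obstacle I expect is exactly that the naive per-neighbour bound reproduces only the weaker theorem \ref{thm:LLLClassical}; extracting the independent-set refinement requires choosing the deletion order (or organising the resummation) so that one pays only for \emph{independent} subsets of each neighbourhood, which is the content of \cite{FernandezProcacci_cluster}.
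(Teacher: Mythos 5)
The paper does not prove this statement at all: it is imported verbatim (in translated notation) from \NameFernandezProcacci{}, \cite[theorem 1]{FernandezProcacci_cluster}, and used as a black box in the ``reinterpretation of bounds'' section. So there is no internal proof to compare against, and your proposal should be judged on its own terms. Your bookkeeping is sound: reading $\ShearerCriticalFunction{\Graph{\NeighboursIncluded{v}}}(-\vec{s})$ as the independent-set polynomial with non-negative weights $s_w$ agrees with the paper's remark that this quantity is $\ge 1$, the restriction of condition \eqref{eq:clusterExpansionCondition} to induced subgraphs $\Graph{W}$ is correct (the sum over independent subsets only loses terms), and reducing $\vec{p}\in\PShearerSetInterior{G}$ to $\ShearerCriticalFunction{\Graph{W}}(\vec{p})>0$ for finite $W$ is the right first move, matching \eqref{eq:shearerParametersInterior} and \eqref{eq:shearerParametersInfinite}.

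The problem is that the core of your argument is circular as a proof of \emph{this} statement: after the reduction you verify that \eqref{eq:clusterExpansionCondition} ``is exactly the hypothesis of the \NameFernandezProcacci{} convergence theorem'' and then invoke that theorem --- but the statement under proof \emph{is} that theorem, only rewritten in the $\ShearerCriticalFunction{}$ language. Invoking it is exactly what the paper does by citation, so as a gloss on the paper your proposal is fine, but as a proof it establishes nothing beyond the translation. The substance you correctly flag in ``the hard part'' --- a \NamePenrose{}-type tree-graph bound with a resummation in which each generation ranges only over independent subsets of $\NeighboursIncluded{v}$, or equivalently an induction on \eqref{eq:shearerCriticalFunctionFundamentalIdentity} organised so that one pays only for independent subsets of each neighbourhood --- is precisely the content of \cite{FernandezProcacci_cluster} and is not carried out. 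Two smaller points: (i) absolute convergence of the Mayer series at fugacity $-\vec{q}$ does not by itself give $\ShearerCriticalFunction{G}(\vec{p})=\exp(\log\ShearerCriticalFunction{G}(\vec{p}))>0$; one needs the standard non-vanishing argument on the whole closed polydisc $\Set{\Modulus{w_v}\le q_v}$ (a zero of $\ShearerCriticalFunction{}$ inside the polydisc would force divergence of the series), which the full statement of the cited theorem supplies but your write-up elides; (ii) for infinite $G$ one should note, as in \eqref{eq:shearerParametersInfinite}, that positivity on induced finite subgraphs suffices because deleting edges only increases the critical function on the relevant region.
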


The minus in \eqref{eq:clusterExpansionCondition} stems from their cluster expansion technique and assures that $\ShearerCriticalFunction{\Graph{\NeighboursIncluded{v}}}(-\vec{s})\ge 1$, whence $q_v\le 1$. The condition takes into account the local structure of $G$, via the triangles in $\NeighboursIncluded{v}$. It thus improves upon the LLL, which only considers the degree of $v$.\\

We present an example of a necessary condition by \NameScottSokal{} in the homogeneous case. Define the \emph{upper growth rate} of a tree $\Tree$ rooted at $o$ by
\begin{equation}\label{eq:upperGrowthRate}
	\GrowthUpperRate{\Tree}:=\limsup_{n\to\infty} \Cardinality{V_n}^{1/n}\,,
\end{equation}
where $V_n$ are the vertices of $\Tree$ at distance $n$ from $o$. Then we have

\begin{Thm}[{\cite[proposition 8.3]{ScottSokal_repulsive}}]
\label{thm:prunedSAWBound}
Let $G$ be infinite. Then
\begin{equation}\label{eq:prunedSAWBound}
	\PShearer{G}\ge 1- \PowerFracDual{\GrowthUpperRate{\Tree}}{(\GrowthUpperRate{\Tree}+1)}\,.
\end{equation}
Here $\Tree$ is a particular pruned subtree of the SAW (self-avoiding-walk) tree of $G$ defined in \cite[section 6.2]{ScottSokal_repulsive}.
\end{Thm}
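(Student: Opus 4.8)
The plan is to read Theorem~\ref{thm:prunedSAWBound} as a necessary condition and to establish its contrapositive: I fix a homogeneous parameter with $q=1-p>\PowerFracDual{\GrowthUpperRate{\Tree}}{(\GrowthUpperRate{\Tree}+1)}$ and show that $p\notin\PShearerSetInterior{G}$. By \eqref{eq:shearerParameters}, \eqref{eq:shearerParametersInterior} and \eqref{eq:shearerParametersInfinite} it suffices to produce a \emph{finite} $W\subseteq V$ with $\ShearerCriticalFunction{\Graph{W}}(p)\le 0$. (A crude bound $\PShearer{G}\ge\tfrac34$ is already immediate from subgraph monotonicity of $\PShearerSet{\cdot}$, since a connected locally finite infinite graph contains a ray, of \NameShearer{}-threshold $\tfrac34=1-\PowerFracDual{1}{2}$, the value for $\Tree_2\cong\IntNum$; the content of the theorem is to replace that exponent $1$ by $\GrowthUpperRate{\Tree}$.) I split this into a tree-equivalence reduction $\PShearer{G}\ge\PShearer{\Tree}$ and a tree estimate $\PShearer{\Tree}\ge 1-\PowerFracDual{\GrowthUpperRate{\Tree}}{(\GrowthUpperRate{\Tree}+1)}$.

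For the reduction I iterate the deletion--contraction identity \eqref{eq:shearerCriticalFunctionFundamentalIdentity}. On the domain where the occurring polynomials are positive, set $\rho_v^{W}:=\ShearerCriticalFunction{\Graph{W\setminus\NeighboursIncluded{v}}}(p)/\ShearerCriticalFunction{\Graph{W\setminus\Set{v}}}(p)$, so \eqref{eq:shearerCriticalFunctionFundamentalIdentity} becomes $\ShearerCriticalFunction{\Graph{W}}(p)=\ShearerCriticalFunction{\Graph{W\setminus\Set{v}}}(p)\bigl(1-q\,\rho_v^{W}\bigr)$; peeling $v$, then one of its neighbours, then a neighbour of that, never revisiting a vertex, expands $\rho_v^{W}$ along self-avoiding walks from $v$ and yields the SAW-tree recursion of \cite[section 6]{ScottSokal_repulsive}. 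On a tree the subtrees below the children of a vertex are disjoint, so this decouples into
\begin{equation*}
	S_v=\Bigl(1-q\,\textstyle\prod_{w\in\Children{v}}S_w\Bigr)^{-1}\ge 1\,,
\end{equation*}
where $S_v:=\ShearerCriticalFunction{T_v\setminus\Set{v}}(p)/\ShearerCriticalFunction{T_v}(p)=(1-q\rho_v)^{-1}$ for the subtree $T_v$ rooted at $v$. The pruned subtree $\Tree$ of the SAW tree from \cite[section 6.2]{ScottSokal_repulsive} is built so as to retain a bad zero while having upper growth rate exactly $\GrowthUpperRate{\Tree}$: a non-positive value $\ShearerCriticalFunction{\Tree^{(n)}}(p)$ on some depth-$n$ truncation of $\Tree$ forces a non-positive value $\ShearerCriticalFunction{\Graph{\Ball{v}{n}}}(p)$ on $G$, i.e.\ $\PShearer{G}\ge\PShearer{\Tree}$.

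For the tree estimate, suppose for contradiction $\ShearerCriticalFunction{\Tree^{(n)}}(p)>0$ for every $n$; then all the $S_v$ exist, exceed $1$, and positivity of $\ShearerCriticalFunction{\Tree^{(n)}}(p)$ is equivalent to $1-q\prod_{w\in\Children{v}}S_w>0$ at every vertex $v$. On the $b$-ary regular tree the recursion is the single map $S\mapsto(1-qS^b)^{-1}$, whose positive fixed point is lost in a tangent bifurcation at $S=\tfrac{b+1}{b}$, $q=\PowerFracDual{b}{(b+1)}$ --- this is the origin of the formula and reproduces $\PShearer{\Tree_D}=1-\PowerFracDual{(D-1)}{D}$ via internal degree $b=D-1$. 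For the irregular $\Tree$ I compare with the $\GrowthUpperRate{\Tree}$-ary regular tree along a subsequence of depths realising the $\limsup$ in \eqref{eq:upperGrowthRate}: a convexity / AM--GM estimate on $\prod_{w}S_w$ over a level shows that the branching bursts which push $\Cardinality{V_n}^{1/n}$ up towards $\GrowthUpperRate{\Tree}$ accumulate enough multiplicative pressure to carry the recursion past the regular-tree threshold, so some factor $1-q\prod_{w}S_w$ turns non-positive once $q>\PowerFracDual{\GrowthUpperRate{\Tree}}{(\GrowthUpperRate{\Tree}+1)}$, contradicting the hypothesis. I expect the main obstacle to be precisely this last step coupled with the pruning: a tree with only $\limsup$ control on its level sizes may branch very unevenly, and the technical core of \cite[section 6.2 and proposition 8.3]{ScottSokal_repulsive} is to replace $G$, and then its SAW tree, by a tree whose level sizes are fully controlled without losing the bad zero.
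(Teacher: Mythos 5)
First, a point of reference: the paper does not prove this statement at all --- it is imported verbatim from \NameScottSokal{} \cite[proposition 8.3]{ScottSokal_repulsive}, and the surrounding text only sketches where the pruned SAW tree comes from. So your proposal can only be judged on its own merits, and there it has a genuine gap. Your architecture is the right one (and matches \NameScottSokal{}'s): (i) a tree-equivalence reduction $\PShearer{G}\ge\PShearer{\Tree}$ via iterating the deletion--contraction identity \eqref{eq:shearerCriticalFunctionFundamentalIdentity} into the pruned SAW tree, and (ii) a tree estimate $\PShearer{\Tree}\ge 1-\PowerFracDual{\GrowthUpperRate{\Tree}}{(\GrowthUpperRate{\Tree}+1)}$. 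Your recursion $S_v=\bigl(1-q\prod_{w\in\Children{v}}S_w\bigr)^{-1}$ and the tangent-bifurcation computation for the $b$-ary tree are correct and do explain the shape of the bound. But step (ii) for an \emph{irregular} tree is precisely the theorem, and your treatment of it --- ``a convexity / AM--GM estimate on $\prod_w S_w$ over a level shows that the branching bursts \ldots{} accumulate enough multiplicative pressure'' --- is not an argument. The recursion is not spherically symmetric: the quantities $S_w$ multiply along the children of a \emph{single} vertex and then propagate down root-to-leaf paths, whereas $\Cardinality{V_n}$ aggregates additively across a level; a $\limsup$ bound on $\Cardinality{V_n}^{1/n}$ permits extremely uneven branching (a few vertices per level carrying almost all of the growth), and no level-wise AM--GM directly converts that into a single factor $1-q\prod_w S_w\le 0$. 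Turning ``$q>\PowerFracDual{\GrowthUpperRate{\Tree}}{(\GrowthUpperRate{\Tree}+1)}$'' into a non-positive critical function of some finite truncation requires a quantitative argument (in \cite{ScottSokal_repulsive} this is the content of their tree analysis in sections 6 and 8), and you yourself flag that you expect this to be the main obstacle --- which is another way of saying the proof is missing at its core.

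A secondary, smaller gap is in step (i): you assert that a non-positive value of $\ShearerCriticalFunction{\Tree^{(n)}}(p)$ on a depth-$n$ truncation ``forces'' a non-positive value of $\ShearerCriticalFunction{\Graph{\Ball{v}{n}}}(p)$. What actually underlies this is the exact tree identity of \NameScottSokal{}: the ratios $\ShearerCriticalFunction{\Graph{W}}(p)/\ShearerCriticalFunction{\Graph{W\setminus\Set{v}}}(p)$ of $G$ coincide with the corresponding quantities computed on the pruned SAW tree, and one must check that the pruning (which deletes not only revisited vertices but also some of their neighbours) preserves the direction of the implication you need. As written, both the pruning construction and the transfer of the zero are outsourced to the reference, so the proposal is a plausible reconstruction of the cited route rather than a proof.
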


The pruned subtree $\Tree$ referred to above stems from a recursive expansion of the critical function via the fundamental identity \eqref{eq:shearerCriticalFunctionFundamentalIdentity} and the subsequent identification of this calculation with the one on $\Tree$. It is a subtree of the SAW tree of $G$, which not only avoids revisiting previously visited nodes, but also some of their neighbours. An example demonstrating this result is the following statement \cite[(8.53)]{ScottSokal_repulsive}:
\begin{equation}\label{eq:prunedSAWZdSimpleBound}
	\PShearer{\IntNum^d}\ge 1-\PowerFracDual{d}{(d+1)}\,.
\end{equation}
It follows from the fact that one can embed a regular rank $d$ rooted tree in the pruned SAW $\Tree$ of $\IntNum^d$, whence $d\le\GrowthUpperRate{\Tree}$. For the full details we refer the reader to \cite[sections 6 \& 8]{ScottSokal_repulsive}.

\section{Proofs}
\label{sec:proof}
We prove theorem \ref{thm:shearerDominationEquivalenceInhomogeneous} by showing all inclusions outlined in figure \ref{fig:structureOfInclusionsInShearerDominationEquivalenceInhomogeneous}. The four center inclusions follow straight from \eqref{eq:dominatedParametersInclusion} and \eqref{eq:strongIsWeak}. The core part are two inclusions marked \LabelShearerImpliesUniformDomination{} and \LabelNonShearerImpliesNonDomination{} in figure \ref{fig:structureOfInclusionsInShearerDominationEquivalenceInhomogeneous}. The second inclusion \LabelNonShearerImpliesNonDomination{} generalizes an idea of \NameLiggettSchonmannStacey{} in section \ref{sec:nondomination}. The key is the usage of \NameShearersMeasure{} on finite subgraphs $H$ for suitable $\vec{p}\in\PShearerSetBoundary{H}$ to create BRFs dominating only trivial BPFs. Our novel contribution is the inclusion \LabelShearerImpliesUniformDomination{}. It replaces the LLL style proof for restricted parameters employed in \cite[proposition 1.2]{LiggettSchonmannStacey_domination} by an optimal bound reminiscent of the optimal bound presented in \cite[section 5.3]{ScottSokal_repulsive}, using the fundamental identity \eqref{eq:shearerCriticalFunctionFundamentalIdentity} to full extent. After some preliminary work on \NameShearersMeasure{} in section \ref{sec:shearerOVOEP} we prove the inclusion \LabelShearerImpliesUniformDomination{} in section \ref{sec:domination}.

\begin{figure}[!htbp]
\begin{displaymath}
	\MxyDominationProofInclusions{0.5cm}{0.5cm}
\end{displaymath}
\caption{Inclusions in the proof of \eqref{eq:shearerParametersEqualsDominationParameterSets}. }
\label{fig:structureOfInclusionsInShearerDominationEquivalenceInhomogeneous}
\end{figure}

\subsection{Tools for stochastic domination}
\label{sec:toolsForStochasticDomination}
In this section we list useful statements related to stochastic domination between BRFs.

\begin{Lem}[{\cite[chapter II, page 79]{Liggett_ips}}]
\label{lem:dominationAndSubfields}
Let $Y,Z$ be two BRFs indexed by $V$, then
\begin{equation}\label{eq:dominationAndSubfields}
	Y\Stochastically{\ge}Z\quad\Iff\quad
	\left(
		\ForAll\text{ finite }W\subseteq V:
		Y_W\Stochastically{\ge}Z_W
	 \right)\,.
\end{equation}
\end{Lem}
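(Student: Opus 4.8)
The plan is to prove both implications directly from the definition \eqref{eq:stochasticDomination}, the forward one by lifting monotone functions from a finite block to all of $V$, and the reverse one by a finite-dimensional approximation argument that exploits the compactness of $\Configurations{V}$. For the implication $\Rightarrow$: fix a finite $W\subseteq V$ and $h\in\MonotoneFunctionsOn{W}$, and set $\tilde h(\vec x):=h(\vec x_W)$ for $\vec x\in\Configurations{V}$. The projection $\vec x\mapsto\vec x_W$ is continuous and order-preserving, so $\tilde h\in\MonotoneFunctionsOn{V}$, and $\Expect[h(Y_W)]=\Expect[\tilde h(Y)]\ge\Expect[\tilde h(Z)]=\Expect[h(Z_W)]$; hence $Y_W\Stochastically{\ge}Z_W$.

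For the implication $\Leftarrow$: fix $f\in\MonotoneFunctionsOn{V}$ and $\varepsilon>0$. Since $\Configurations{V}=\Set{0,1}^V$ is compact, $f$ is uniformly continuous, so there is a finite $W\subseteq V$ and a reference configuration $\vec x^0$ such that $g(\vec x):=f(\vec x_W,\vec x^0_{V\setminus W})$ satisfies $\Modulus{f-g}<\varepsilon$ uniformly; this $g$ depends only on the coordinates $(x_v)_{v\in W}$. Since $g$ need not be monotone, I monotonize it by $\tilde g(\vec x):=\inf\Set{g(\vec y):\vec y\ge\vec x}$. One checks that $\tilde g$ is monotone, that it still depends only on $(x_v)_{v\in W}$ (the off-$W$ part of the constraint $\vec y\ge\vec x$ is free and does not affect $g$), hence is continuous, and that $\Modulus{\tilde g-f}\le\varepsilon$: indeed $\tilde g(\vec x)\le g(\vec x)\le f(\vec x)+\varepsilon$, while for every $\vec y\ge\vec x$ monotonicity of $f$ gives $g(\vec y)\ge f(\vec y)-\varepsilon\ge f(\vec x)-\varepsilon$, so $\tilde g(\vec x)\ge f(\vec x)-\varepsilon$. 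Restricting $\tilde g$ to $\Configurations{W}$ gives an element of $\MonotoneFunctionsOn{W}$, so the hypothesis $Y_W\Stochastically{\ge}Z_W$ yields $\Expect[\tilde g(Y)]=\Expect[\tilde g(Y_W)]\ge\Expect[\tilde g(Z_W)]=\Expect[\tilde g(Z)]$, whence $\Expect[f(Y)]\ge\Expect[\tilde g(Y)]-\varepsilon\ge\Expect[\tilde g(Z)]-\varepsilon\ge\Expect[f(Z)]-2\varepsilon$. Letting $\varepsilon\downarrow0$ gives $\Expect[f(Y)]\ge\Expect[f(Z)]$, and since $f$ was arbitrary, $Y\Stochastically{\ge}Z$.

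The step needing the most care is the monotonization $g\mapsto\tilde g$: one must verify simultaneously that the uniform approximation bound survives and that $\tilde g$ remains a cylinder function on $W$, both of which rest on the monotonicity of $f$ and on the order structure of $\Configurations{V}$ being a product order. An alternative route, closer to the coupling viewpoint used elsewhere in the paper, avoids this: apply Strassen's theorem on each finite $W$ to obtain a coupling $\pi_W$ of $Y_W$ and $Z_W$ concentrated on $\Set{\vec y_W\ge\vec z_W}$, extend each $\pi_W$ to a coupling of the full fields $Y$ and $Z$ by resampling the coordinates off $W$ conditionally independently, and extract a weak-$*$ limit point $\pi$ of $(\pi_W)$ along the net of finite $W$ directed by inclusion. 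Each set $\Set{(\vec y,\vec z):\vec y_{W_0}\ge\vec z_{W_0}}$ is clopen and carries full mass under $\pi_W$ once $W\supseteq W_0$, so it carries full mass under $\pi$; intersecting over all finite $W_0$ shows $\pi$ is concentrated on $\Set{\vec y\ge\vec z}$, which is the required coupling and again gives $Y\Stochastically{\ge}Z$.
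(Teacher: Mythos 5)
Your proof is correct. Note, however, that the paper does not prove this lemma at all --- it is quoted as a known result from Liggett's book (chapter II), so there is no in-paper argument to compare against; what you have supplied is essentially the standard proof, in two independent ways. The forward direction (lifting $h\in\MonotoneFunctionsOn{W}$ to the cylinder function $\tilde h(\vec x)=h(\vec x_W)$) is exactly right. In the reverse direction your analytic route is sound, but the monotonization step is superfluous: since $f$ is monotone and $g(\vec x)=f(\vec x_W,\vec x^0_{V\setminus W})$ is obtained by freezing the off-$W$ coordinates, $g$ is itself already monotone (if $\vec x\le\vec y$ then $(\vec x_W,\vec x^0_{V\setminus W})\le(\vec y_W,\vec x^0_{V\setminus W})$), so $\tilde g=g$ and you could pass directly to the comparison $\Expect[g(Y)]\ge\Expect[g(Z)]$; your detour through $\tilde g(\vec x)=\inf\Set{g(\vec y):\vec y\ge\vec x}$ is harmless and the verification that it stays a monotone cylinder function within $\varepsilon$ of $f$ is correct, it is just not needed. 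Your alternative coupling route (Strassen on each finite block, extension by conditionally independent resampling off $W$, weak-$*$ limit along the net of finite sets, and the observation that each clopen event $\Set{\vec y_{W_0}\ge\vec z_{W_0}}$ retains full mass in the limit) is also valid and is closer in spirit to how such statements are proved in Liggett's framework and to the coupling viewpoint the paper uses elsewhere; the only point deserving explicit mention there is the final intersection over all finite $W_0$, which is immediate when $V$ is countable (the relevant setting here, $G$ being locally finite) and otherwise requires invoking regularity of the limit measure, since $\Set{\vec y\ge\vec z}$ is then an intersection of uncountably many full-measure events.
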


\begin{NoteToSelf}
One way to prove lemma \ref{lem:dominationAndSubfields} is using the Lemma of Zorn on \NameKolmogorov{} consistent chains in the space of all coupling measures (in the context of the theorem by Liggett).
\end{NoteToSelf}

We build on the following technical result, inspired by \cite[lemma 1]{Russo_zeroone}.

\begin{Prop}\label{prop:russoInhomogeneousExtension}
If $Z:=\Set{Z_n}_{n\in\NatNum}$ is a BRF with
\begin{equation}\label{eq:russoInhomogeneousExtension}
	\ForAll n\in\NatNum,
	\vec{s}_{\IntegerSet{n}}\in\Configurations{{\IntegerSet{n}}}:\quad
	\Proba(Z_{n+1}=1|Z_{\IntegerSet{n}}=\vec{s}_{\IntegerSet{n}})\ge p_n\,,
\end{equation}
then $Z\Stochastically{\ge}\BernoulliProductField{\vec{p}}{\NatNum}$.
\end{Prop}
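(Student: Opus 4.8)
The plan is to reduce the infinite index set to finite initial segments and then, on each segment, to couple $Z$ monotonically with the relevant product field by a greedy ``reveal the coordinates one at a time'' construction — in effect the one-dimensional Strassen coupling applied inductively.

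First I would apply Lemma~\ref{lem:dominationAndSubfields}. Since $\BernoulliProductField{\vec{p}}{\NatNum}$ is a product measure, $(\BernoulliProductField{\vec{p}}{\NatNum})_W=\BernoulliProductField{\vec{p}}{W}$ for every $W\subseteq\NatNum$, so $Z\Stochastically{\ge}\BernoulliProductField{\vec{p}}{\NatNum}$ is equivalent to $Z_W\Stochastically{\ge}\BernoulliProductField{\vec{p}}{W}$ for every finite $W$. Each finite $W$ lies in some initial segment $\IntegerSet{n}$, and restricting a monotone coupling to the coordinates of $W$ shows that $Z_{\IntegerSet{n}}\Stochastically{\ge}\BernoulliProductField{\vec{p}}{\IntegerSet{n}}$ implies $Z_W\Stochastically{\ge}\BernoulliProductField{\vec{p}}{W}$. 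Hence it suffices to prove $Z_{\IntegerSet{n}}\Stochastically{\ge}\BernoulliProductField{\vec{p}}{\IntegerSet{n}}$ for each fixed $n$.

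For fixed $n$ I would build on one probability space a pair $(\widehat{Z},X)$ with $\widehat{Z}\DistributedAs Z_{\IntegerSet{n}}$, with $X\DistributedAs\BernoulliProductField{\vec{p}}{\IntegerSet{n}}$, and with $X\le\widehat{Z}$ pointwise; by the coupling description of $\Stochastically{\ge}$ recalled after \eqref{eq:stochasticDomination} this is exactly the goal. Reveal the coordinates $1,\dots,n$ in order (convention $\IntegerSet{0}:=\emptyset$). Given $(\widehat{Z}_{\IntegerSet{k}},X_{\IntegerSet{k}})$, sample $\widehat{Z}_{k+1}$ from the conditional law $\Proba(Z_{k+1}=\cdot\mid Z_{\IntegerSet{k}}=\widehat{Z}_{\IntegerSet{k}})$ and set $r:=\Proba(Z_{k+1}=1\mid Z_{\IntegerSet{k}}=\widehat{Z}_{\IntegerSet{k}})$, so that $r\ge p_k$ by \eqref{eq:russoInhomogeneousExtension}. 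Then put $X_{k+1}:=0$ if $\widehat{Z}_{k+1}=0$, and if $\widehat{Z}_{k+1}=1$ let $X_{k+1}:=1$ with probability $p_k/r\in[0,1]$ and $X_{k+1}:=0$ otherwise, the extra randomisation being independent of all previously revealed variables (in the degenerate case $r=0$, which forces $p_k=0$, just take $X_{k+1}:=0$). By construction $X_{k+1}\le\widehat{Z}_{k+1}$, hence inductively $X\le\widehat{Z}$.

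Finally I would identify the laws. By construction $\widehat{Z}\DistributedAs Z_{\IntegerSet{n}}$. For $X$ one computes $\Proba(X_{k+1}=1\mid\widehat{Z}_{\IntegerSet{k}},X_{\IntegerSet{k}})=r\cdot(p_k/r)=p_k$, a constant, so $X_{k+1}$ is a \NameBernoulli{}$(p_k)$ variable independent of $(\widehat{Z}_{\IntegerSet{k}},X_{\IntegerSet{k}})$, in particular of $X_{\IntegerSet{k}}$; inducting on $k$ gives $X\DistributedAs\BernoulliProductField{\vec{p}}{\IntegerSet{n}}$. This, together with the reduction above, proves the proposition. The step I expect to be the crux is exactly this last one: the greedy construction could a priori leave $X$ carrying spurious dependence, and it is the choice of the thinning probability $p_k/r$ — rather than anything larger, even though $r\ge p_k$ would permit it — that pins the conditional probability of $X_{k+1}=1$ given the past down to $p_k$ exactly, so that $X$ is an honest product field. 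The reduction also uses, tacitly, that $\Stochastically{\ge}$ is inherited by subfamilies of coordinates, which is immediate from \eqref{eq:stochasticDomination}.
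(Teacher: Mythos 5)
Your proposal is correct and is essentially the paper's own argument: the paper proves this proposition by the same reveal-one-coordinate-at-a-time monotone coupling, only written directly as a coupling measure $\nu$ on the infinite product $\Configurations{\NatNum^2}$ whose one-step conditional table (both coordinates $1$ with probability $p$, first $1$/second $0$ with probability $r-p$, both $0$ with probability $1-r$, never $0$/$1$) is exactly the joint law your thinning construction produces, and which is then checked against lemma \ref{lem:stochasticDominationCouplingEquivalence}. Your only packaging difference is reducing first to finite initial segments via lemma \ref{lem:dominationAndSubfields} and phrasing the coupling with random variables and an auxiliary thinning coin rather than as an explicit measure, which changes nothing essential.
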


\begin{proof}
Essentially the same inductive proof as in \cite[lemma 1]{Russo_zeroone}.
\end{proof}

If $Y$ and $Z$ are two independent BRFs with marginal vectors $\vec{p}$ and $\vec{r}$, then we denote by
\begin{equation}\label{eq:vertexWiseMinimum}
	Y\land Z := (Y_v\land Z_v)_{v\in V}
\end{equation}
the \emph{vertex-wise minimum} with marginal vector $\vec{p}\,\vec{r}$. Coupling shows that for every two BRFs $Y$ and $Z$ we have
\begin{subequations}\label{eq:dominationVWMM}
\begin{equation}\label{eq:dominationVWMM_Chain}
	Y\land Z\Stochastically{\le} Y\,,
\end{equation}
and if $X$ is a third BRF independent of $(Y,Z)$ also
\begin{equation}\label{eq:dominationVWMM_Multiplication}
	Y\Stochastically{\ge}Z\Then
	(Y\land X)\Stochastically{\ge}(Z\land X)\,.
\end{equation}
\end{subequations}

\begin{Prop}\label{prop:dominatedParameterMonotonicity}
For each dependency class $C$ used in this paper and all $\vec{p}$ and $\vec{r}$, we have
\begin{equation}\label{eq:dominatedParameterMonotonicity}
	\DominatedVectors{C(\vec{p}\,\vec{r})}\subseteq\DominatedVectors{C(\vec{p})}\,.
\end{equation}
\end{Prop}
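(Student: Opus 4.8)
The plan is to exhibit, for every BRF $Y\in C(\vec{p}\,\vec{r})$, a BRF $Y'\in C(\vec{p})$ together with a coupling showing $Y\Stochastically{\le}Y'$; then any non-trivial $\vec{c}\in\DominatedVectors{C(\vec{p})}$ automatically lies in $\DominatedVectors{Y'}\subseteq\DominatedVectors{Y}$ via transitivity of $\Stochastically{\ge}$, and since $Y$ was arbitrary this gives $\DominatedVectors{C(\vec{p})}\subseteq\DominatedVectors{C(\vec{p}\,\vec{r})}$ --- wait, that is the wrong direction, so more precisely: I want to show that every $\vec{c}\in\DominatedVectors{C(\vec{p}\,\vec{r})}$ belongs to $\DominatedVectors{C(\vec{p})}$, i.e. given $\vec{c}$ dominated by every field in $C(\vec{p}\,\vec{r})$ and given an arbitrary $Z\in C(\vec{p})$, I must produce $Z\Stochastically{\ge}\BernoulliProductField{\vec{c}}{V}$. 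The idea is to \emph{thin} $Z$: let $X$ be a BPF with parameter vector $\vec{r}$, independent of $Z$, and set $Z':=Z\land X$ in the notation of \eqref{eq:vertexWiseMinimum}. Then $Z'$ has marginal vector $\vec{p}\,\vec{r}$.

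The first substantive step is to check that $Z'\in C(\vec{p}\,\vec{r})$ for each of the three classes $C\in\{\DependencyClassWeak{G},\DependencyClassStrong{G},\DependencyClassWeakInvariant{G}\}$ appearing in the paper. For $\DependencyClassStrong{G}$ this is immediate: independence of $X$ from $Z$ and the product structure of $X$ mean that if $\GraphMetric{W_1}{W_2}>1$ then $Z_{W_1}\IndependentOf Z_{W_2}$ and $X_{W_1}\IndependentOf X_{W_2}$ and the two pairs are mutually independent, hence $Z'_{W_1}\IndependentOf Z'_{W_2}$; marginals multiply coordinate-wise, giving parameter $\vec{p}\,\vec{r}$. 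For $\DependencyClassWeak{G}$ one computes, using independence of $X$,
\begin{equation*}
	\Proba(Z'_v=1\mid Z'_{V\setminus\NeighboursIncluded{v}})
	=\Proba(X_v=1)\,\Proba(Z_v=1\mid Z'_{V\setminus\NeighboursIncluded{v}})
	\ge r_v\,p_v\,,
\end{equation*}
where the last inequality needs the observation that conditioning $Z_v$ on $Z'_{V\setminus\NeighboursIncluded{v}}=(Z\land X)_{V\setminus\NeighboursIncluded{v}}$ --- a coarsening of $(Z_{V\setminus\NeighboursIncluded{v}},X_{V\setminus\NeighboursIncluded{v}})$ --- still yields conditional probability at least $p_v$, since $\Proba(Z_v=1\mid Z_{V\setminus\NeighboursIncluded{v}})\ge p_v$ pointwise and $X$ is independent of $Z$; one averages over the fibres. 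The invariant case follows because $\land$ with an i.i.d.\ (hence automorphism-invariant) $X$ preserves whatever invariance $Z$ has.

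The second step is the domination $Z\Stochastically{\ge}Z'$, which is exactly \eqref{eq:dominationVWMM_Chain} applied with the roles $Y:=Z$, and $Z':=Z\land X$. Combining: since $\vec{c}\in\DominatedVectors{C(\vec{p}\,\vec{r})}$ and $Z'\in C(\vec{p}\,\vec{r})$, we get $Z'\Stochastically{\ge}\BernoulliProductField{\vec{c}}{V}$, and then transitivity with $Z\Stochastically{\ge}Z'$ yields $Z\Stochastically{\ge}\BernoulliProductField{\vec{c}}{V}$. As $Z\in C(\vec{p})$ was arbitrary, $\vec{c}\in\DominatedVectors{C(\vec{p})}$, proving \eqref{eq:dominatedParameterMonotonicity}. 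I expect the only delicate point to be the weak-dependency conditional-probability estimate above: one must be careful that conditioning on the \emph{minimum} field $Z'_{V\setminus\NeighboursIncluded{v}}$ rather than on $Z_{V\setminus\NeighboursIncluded{v}}$ does not destroy the bound, which is handled by the tower property and the independence of $X$ from $Z$. Everything else is a direct appeal to \eqref{eq:dominationVWMM}, transitivity of $\Stochastically{\ge}$, and the coordinate-wise multiplicativity of marginals.
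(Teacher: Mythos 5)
Your proposal is correct and follows essentially the same route as the paper's proof: thin an arbitrary $Y\in C(\vec{p})$ by an independent $\BernoulliProductField{\vec{r}}{V}$-distributed field, observe $Y\land X\in C(\vec{p}\,\vec{r})$, and chain $\BernoulliProductField{\vec{c}}{V}\Stochastically{\le}Y\land X\Stochastically{\le}Y$ using \eqref{eq:dominationVWMM}. The only difference is that you spell out the class-membership check for $Y\land X$ (including the tower-property argument in the weak case), which the paper leaves implicit.
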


\begin{proof}
Let $\vec{c}\in\DominatedVectors{C(\vec{p}\,\vec{r})}$. Let $Y\in C(\vec{p})$ and $X$ be $\BernoulliProductField{\vec{r}}{V}$-distributed independently of $Y$. Using \eqref{eq:dominationVWMM} we get $\BernoulliProductField{\vec{c}}{V}\Stochastically{\le}Y\land X\Stochastically{\le}Y$, whence $\vec{c}\in\DominatedVectors{Y}$. As this holds for every $Y\in C(\vec{p})$ we have $\vec{c}\in\DominatedVectors{C(\vec{p})}$.
\end{proof}

\subsection{Nondomination}
\label{sec:nondomination}
In this section we prove inclusion \LabelNonShearerImpliesNonDomination{} from figure \ref{fig:structureOfInclusionsInShearerDominationEquivalenceInhomogeneous}, that is $\PDominationSet{\DependencyClassStrong{G}}\subseteq\PShearerSetInterior{G}$. The plan is as follows: in lemma \ref{lem:zeroProbabilityBelow} we recall a coupling involving \NameShearersMeasure{} on a finite graph $H$ \cite[proof of theorem 1]{Shearer_problem}, which creates a BRF dominating only trivial BPFs for every $\vec{p}\not\in\PShearerSetInterior{H}$. In proposition \ref{prop:nondomination} we generalize an approach used by \NameLiggettSchonmannStacey{} \cite[theorem $2.1$]{LiggettSchonmannStacey_domination} to arbitrary graphs and inhomogeneous parameters. For infinite $G$ and $\vec{p}\not\in\PShearerSetInterior{G}$, we find a suitable finite subgraph $H$ of $G$ on which to effectuate the above mentioned coupling and extend it with an independent BPF on the complement. The resulting BRF dominates only trivial BPFs.

\begin{Lem}[{\cite[proof of theorem $1$]{Shearer_problem}}]
\label{lem:zeroProbabilityBelow}
Let $G$ be finite. If $\vec{p}\not\in\PShearerSetInterior{G}$, then there exists a BRF $Z\in\DependencyClassStrong{G}(\vec{p})$ with $\Proba(Z_V=\vec{1})=0$.
\end{Lem}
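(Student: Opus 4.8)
The plan is to exhibit, for a finite graph $G$ with $\vec{p}\not\in\PShearerSetInterior{G}$, an explicit BRF $Z$ with strong dependency graph $G$, marginals $\vec{p}$, and $\Proba(Z_V=\vec{1})=0$. First I would reduce to the critical case: since $\vec{p}\not\in\PShearerSetInterior{G}$, by the characterization \eqref{eq:shearerParametersInterior} there is a subgraph $H$ of $G$ (we may take $H=\Graph{W}$ for some $W\subseteq V$, or just an edge-subgraph) with $\ShearerCriticalFunction{H}(\vec{p})\le 0$. Since $\PShearerSet{H}$ is an up-set and $\ShearerCriticalFunction{H}$ is continuous and strictly increasing on it, I can find a parameter vector $\vec{p}^{\,\prime}\ge\vec{p}$ (componentwise), with $p'_v = p_v$ wherever that already suffices, such that $\vec{p}^{\,\prime}\in\PShearerSetBoundary{H}$, i.e.\ $\ShearerCriticalFunction{H}(\vec{p}^{\,\prime})=0$ and $\ShearerMeasure{H}{\vec{p}^{\,\prime}}$ is a genuine probability measure. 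Then $\ShearerMeasure{H}{\vec{p}^{\,\prime}}(Y_W=\vec{1})=\ShearerCriticalFunction{H}(\vec{p}^{\,\prime})=0$ by \eqref{eq:shearerCriticalFunction}; here the vertex set of $H$ is $W$.

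Next I would realize $Z$ on all of $V$. On the vertex set $W$ of $H$, take a copy $Y^{(1)}\sim\ShearerMeasure{H}{\vec{p}^{\,\prime}}$; this field has strong dependency graph $H$ (a subgraph of $G$), marginal parameter vector $\vec{p}^{\,\prime}_W$, and $\Proba(Y^{(1)}_W=\vec{1})=0$. On $V\setminus W$ take an independent BPF $Y^{(2)}\sim\BernoulliProductField{\vec{1}}{V\setminus W}$ (i.e.\ all ones a.s.), which trivially has $G$ as a strong dependency graph and marginal $1$ everywhere. Their independent concatenation still has strong dependency graph (a subgraph of) $G$, but the marginals on $W$ are $\vec{p}^{\,\prime}_W\ge\vec{p}_W$, possibly too large, and the marginals off $W$ are $1\ge\vec{p}_{V\setminus W}$. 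To correct them down to exactly $\vec{p}$, take an independent BPF $X\sim\BernoulliProductField{\vec{r}}{V}$ with $\vec{r}$ chosen so that $\vec{r}\,\vec{p}^{\,\prime} = \vec{p}$ on $W$ and $r_v = p_v$ off $W$ — this is possible since $p_v\le p'_v$ forces $r_v\in[0,1]$ — and set $Z:=(Y^{(1)}\uplus Y^{(2)})\land X$. By construction $Z$ has marginal vector $\vec{p}$; adding independent coordinatewise noise does not enlarge the dependency graph, so $G$ is still a strong dependency graph of $Z$, i.e.\ $Z\in\DependencyClassStrong{G}(\vec{p})$; and since $Z_W\le Y^{(1)}_W$ coordinatewise, $\Proba(Z_V=\vec{1})\le\Proba(Z_W=\vec{1})\le\Proba(Y^{(1)}_W=\vec{1})=0$.

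The main obstacle is the first step — producing, from the mere failure $\ShearerCriticalFunction{H}(\vec{p})\le 0$ for some subgraph $H$, an honest boundary parameter $\vec{p}^{\,\prime}\ge\vec{p}$ with $\ShearerMeasure{H}{\vec{p}^{\,\prime}}$ a \emph{probability} measure (not merely a signed measure with vanishing top marginal). One has to walk up along a ray or a monotone path from $\vec{p}$ inside $[0,1]^W$ until the first time some $\ShearerCriticalFunction{\Graph{W'}}$ hits zero, using that $\PShearerSet{H}$ is a closed up-set and that on it $\ShearerCriticalFunction{H}$ is strictly increasing, so that at the first zero of $\ShearerCriticalFunction{H}$ all smaller-subgraph critical functions are still $\ge 0$; this is exactly the content of the boundary description \eqref{eq:shearerParametersBoundary}–\eqref{eq:shearerParametersInterior} together with the up-set property cited from \cite[proposition 2.15]{ScottSokal_repulsive}. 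Once that is in hand, everything else is routine coupling bookkeeping via \eqref{eq:dominationVWMM} and the observation that coordinatewise independent thinning preserves strong dependency graphs.
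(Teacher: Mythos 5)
Your proposal is correct and is essentially the paper's own argument: move up to a boundary parameter $\vec{p}'\ge\vec{p}$ of the Shearer set (where $\ShearerMeasure{}{}$ exists as a probability measure and the critical function vanishes), and thin the corresponding Shearer field by an independent BPF via $\land$ so as to restore the marginals $\vec{p}$, which preserves the strong dependency graph and keeps $\Proba(Z_V=\vec{1})=0$. The only difference is that the paper does this directly on $G$, taking the unique crossing $\vec{r}$ of the segment $[\vec{p},\vec{1}]$ with $\PShearerSetBoundary{G}$ and setting $Z:=Y\land X$ with $\vec{p}=\vec{x}\,\vec{r}$, so your detour through an induced subgraph $H=\Graph{W}$ with the all-ones extension off $W$ is harmless but unnecessary.
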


\begin{proof}
As $\vec{p}\not\in\PShearerSetInterior{G}$ and $\vec{1}\in\PShearerSetInterior{G}$ the line segment $[\vec{p},\vec{1}]$ crosses $\PShearerSetBoundary{G}$ at the vector $\vec{r}$ (unique because $\PShearerSetInterior{G}$ is an up-set \cite[proposition 2.15 (b)]{ScottSokal_repulsive}). Let $\vec{x}$ be the solution of $\vec{p}=\vec{x}\,\vec{r}$. Let $Y$ be $\ShearerMeasure{G}{\vec{r}}$-distributed and $X$ be $\BernoulliProductField{\vec{x}}{V}$-distributed independently of $Y$. Set $Z:=Y\land X$. Then $Z\in\DependencyClassStrong{G}(\vec{p})$ and
\begin{equation*}
	\Proba(Z_V=\vec{1})=\Proba(X_V=\vec{1})\ShearerMeasure{G}{\vec{r}}(Y_V=\vec{1})=0\,.
\end{equation*}
\end{proof}

\begin{Prop}\label{prop:nondomination}
We have $\PDominationSet{\DependencyClassStrong{G}}\subseteq\PShearerSetInterior{G}$.
\end{Prop}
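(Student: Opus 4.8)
The plan is to prove the contrapositive: if $\vec{p}\notin\PShearerSetInterior{G}$, then $\vec{p}\notin\PDominationSet{\DependencyClassStrong{G}}$, i.e. there exists a BRF $Z\in\DependencyClassStrong{G}(\vec{p})$ such that the only vector $\vec{c}\in\DominatedVectors{Z}$ with $\vec{c}\ge\vec{0}$ satisfying $\vec{c}\StrictlyGreater\vec{0}$ actually fails to exist — every BPF dominated by $Z$ is trivial (has some zero coordinate). The finite case is exactly lemma \ref{lem:zeroProbabilityBelow}: there we get $Z\in\DependencyClassStrong{G}(\vec{p})$ with $\Proba(Z_V=\vec{1})=0$, and since any $\BernoulliProductField{\vec{c}}{V}$ with $\vec{c}\StrictlyGreater\vec{0}$ has $\BernoulliProductField{\vec{c}}{V}(X_V=\vec{1})=\prod_v c_v>0$ when $V$ is finite, the monotone function $\Indicator{\Set{\vec{1}}}$ (the indicator of the all-ones configuration, which is an up-set, hence monotone) witnesses $Z\Stochastically{\not\ge}\BernoulliProductField{\vec{c}}{V}$. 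So no non-trivial $\vec{c}$ lies in $\DominatedVectors{Z}$.

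For infinite $G$, the key is the definition \eqref{eq:shearerParametersInfinite}: $\vec{p}\notin\PShearerSetInterior{G}$ means there is a finite edge set $E'\subseteq E$ with $\vec{p}\notin\PShearerSetInterior{(V,E')}$, and then, restricting to the finite induced subgraph $H:=\Graph{W}$ spanned by the endpoints of $E'$ (using that $\ShearerCriticalFunction{\Graph{W'}}$ only sees edges inside $W'$, and monotonicity of $\PShearerSetInterior{\cdot}$ under adding edges \eqref{eq:shearerParametersInterior}), we get a finite induced subgraph $H$ of $G$ with $\vec{p}_W\notin\PShearerSetInterior{H}$. Apply lemma \ref{lem:zeroProbabilityBelow} on $H$ to obtain $Z_W\in\DependencyClassStrong{H}(\vec{p}_W)$ with $\Proba(Z_W=\vec{1}_W)=0$. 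Now extend: let $\tilde X$ be a $\BernoulliProductField{\vec{p}_{V\setminus W}}{V\setminus W}$-distributed BPF independent of $Z_W$, and set $Z:=(Z_W,\tilde X)$ on all of $V$. One checks $Z\in\DependencyClassStrong{G}(\vec{p})$: the marginals are correct by construction, and $G$ is a strong dependency graph of $Z$ because $H$ is a strong dependency graph of $Z_W$ and the complement is independent of everything (so for $W_1,W_2$ with $\GraphMetric{W_1}{W_2}>1$, independence of $Z_{W_1}$ and $Z_{W_2}$ follows by splitting each $W_i$ into its part in $W$ and its part outside).

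Finally, suppose $\vec{c}\in\DominatedVectors{Z}$ with $\vec{c}\StrictlyGreater\vec{0}$. By lemma \ref{lem:dominationAndSubfields}, $Z_W\Stochastically{\ge}\BernoulliProductField{\vec{c}_W}{W}$; but $W$ is finite, so $\BernoulliProductField{\vec{c}_W}{W}(X_W=\vec{1}_W)=\prod_{v\in W}c_v>0$ while $\Proba(Z_W=\vec{1}_W)=0$, contradicting domination via the monotone indicator $\Indicator{\Set{\vec{1}_W}}\in\MonotoneFunctionsOn{W}$. Hence no non-trivial dominated vector exists, so $\vec{p}\notin\PDominationSet{\DependencyClassStrong{G}}$. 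The main obstacle — and the only genuinely non-routine point — is the reduction from "$\vec{p}\notin\PShearerSetInterior{G}$ for infinite $G$" to a \emph{finite induced} subgraph on which lemma \ref{lem:zeroProbabilityBelow} applies, i.e. passing from an arbitrary finite edge subset to the induced subgraph on its vertex set while preserving the failure of the Shearer interior condition; everything after that is bookkeeping with couplings and the monotone function $\Indicator{\Set{\vec{1}}}$.
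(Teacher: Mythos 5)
Your proof is correct and follows essentially the same route as the paper: reduce to a finite $W\subseteq V$ with $\vec{p}_W\notin\PShearerSetInterior{\Graph{W}}$, apply lemma \ref{lem:zeroProbabilityBelow} there, extend by an independent BPF on $V\setminus W$, and use lemma \ref{lem:dominationAndSubfields} with the monotone indicator of $\Set{\vec{1}}$ to force a zero coordinate in any dominated vector. Your extra care in passing from a finite edge set $E'$ to the induced subgraph on its endpoints (plus, if needed, any vertex with $p_v=0$) merely makes explicit the one-line reduction the paper asserts.
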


\begin{proof}
Let $\vec{p}\not\in\PShearerSetInterior{G}$. Then there exists a finite set $W\subseteq V$ with $\vec{p}_W\not\in\PShearerSetInterior{\Graph{W}}$. Using lemma \ref{lem:zeroProbabilityBelow} create a $Y_W\in\DependencyClassStrong{\Graph{W}}(\vec{p})$ with $\Proba(Y_W=\vec{1})=0$. Extend this to a $Y\in\DependencyClassStrong{G}(\vec{p})$ by letting $Y_{V\setminus W}$ be $\BernoulliProductField{\vec{p}_{V\setminus W}}{V\setminus W}$-distributed independently of $Y_W$. Suppose that $Y\Stochastically{\ge}X$, where $X$ is $\BernoulliProductField{\vec{x}}{V}$-distributed. Then lemma \ref{lem:dominationAndSubfields} implies that $Y_W\Stochastically{\ge}X_W$ and, using $f:=\Indicator{\Set{\vec{1}}}\in\MonotoneFunctionsOn{W}$, that
\begin{equation*}
	0
	=\Proba(Y_W=\vec{1})
	=\Expect[f(Y_W)]
	\ge\Expect[f(X_W)]
	=\Proba(X_W=\vec{1})
	=\prod_{v\in W} x_v
	\ge 0\,.
\end{equation*}
Hence there exists a $v\in W$ with $x_v=0$, whence $\vec{x}\not\StrictlyGreater\vec{0}$ and $\vec{p}\not\in\PDominationSet{\DependencyClassStrong{G}}$.
\end{proof}
\subsection{One vertex open extension probabilities}
\label{sec:shearerOVOEP}
In this section we reencode our knowledge of \NameShearersMeasure{} from the critical functions as ratios of critical functions, that is conditional probabilities of the form ``open on some vertices \textbar{} open on some other vertices''. These are exactly the ones \NameShearersMeasure{} is minimal for \eqref{eq:shearerMinimalityOVOEP}. This viewpoint admits a more succinct formulation of the fundamental identity \eqref{eq:shearerCriticalFunctionFundamentalIdentity} and bounds in proposition \ref{prop:shearerOVOEPBounds}. The notion of ``escaping'' pair introduced in this section is inspired by \cite[theorem 2]{Shearer_problem}. It allows us to push the mass of unwanted conditional events away. We obtain lower bounds on conditional events of the above form, which are independent of size of the condition, as long as the escape persists.\\

For finite $W\subseteq V$ with $v\not\in W$ and when $\ShearerCriticalFunction{\Graph{W}}(\vec{p})>0$ define the \emph{one vertex open extension probability} of $(W,v)$ by
\begin{equation}\label{eq:shearerOVOEP}
	\ShearerOVOEP{W}{v}(\vec{p})
	:=\ShearerMeasure{G}{\vec{p}}(Y_v=1|Y_W=\vec{1})\,.
\end{equation}
Reformulate the \emph{fundamental identity} \eqref{eq:shearerCriticalFunctionFundamentalIdentity} as
\begin{equation}\label{eq:shearerOVOEPFundamentalIdentity}
	\ShearerOVOEP{W}{v}(\vec{p})
	=1-\frac{q_v}{\prod_{i=1}^m \ShearerOVOEP{W\setminus\Set{w_i,\dotsc,w_m}}{w_i}(\vec{p})},
\end{equation}
where $W\cap\Neighbours{v}=:\Set{w_1,\dotsc,w_m}$.\\

\begin{Def}\label{def:shearerOVOEPEscaping}
Call the pair $(W,v)$, respectively $\ShearerOVOEP{W}{v}$, \emph{escaping} iff $\Neighbours{v}\setminus W\not=\emptyset$ and call every vertex $w\in\Neighbours{v}\setminus W$ an \emph{escape} of $(W,v)$.
\end{Def}

\begin{Prop}\label{prop:shearerOVOEPBounds}
Let $\vec{p}\in\PShearerSet{G}$, then
\begin{subequations}\label{eq:shearerOVOEPBounds}
\begin{equation}\label{eq:shearerOVOEPBoundsAbove}
	\ForAll (W,v):\quad
	\ShearerOVOEP{W}{v}(\vec{p})\le p_v
\end{equation}
and
\begin{equation}\label{eq:shearerOVOEPBoundsBelow}
	\ForAll (W,v), w\in\Neighbours{v}\setminus W:\quad
	q_w \le \ShearerOVOEP{W}{v}(\vec{p}).
\end{equation}
\end{subequations}
\end{Prop}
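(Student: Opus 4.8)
The upper bound \eqref{eq:shearerOVOEPBoundsAbove} should follow from the minimality lemma \ref{lem:shearerMinimality} applied to a cleverly chosen field, or more directly from the fact that $\ShearerMeasure{G}{\vec{p}}\in\DependencyClassWeak{G}(\vec{p})$ combined with a general inequality $\Proba(Y_v=1\mid Y_W=\vec{1})\le\Proba(Y_v=1\mid Y_{V\setminus\NeighboursIncluded{v}})$... no, that goes the wrong way. Let me reconsider. Actually the cleanest route to \eqref{eq:shearerOVOEPBoundsAbove} is via \eqref{eq:shearerMinimalityOVOEP} reversed: we compare $\ShearerMeasure{G}{\vec{p}}$ against the product field $\BernoulliProductField{\vec{p}}{V}\in\DependencyClassWeak{G}(\vec{p})$, but \eqref{eq:shearerMinimalityOVOEP} says Shearer is the *smallest*, so $\alpha_W^v(\vec p)=\ShearerMeasure{G}{\vec{p}}(Y_v=1\mid Y_W=\vec 1)\le\BernoulliProductField{\vec{p}}{V}(X_v=1\mid X_W=\vec 1)=p_v$. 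That is it.

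For the lower bound \eqref{eq:shearerOVOEPBoundsBelow}, the plan is to exploit the escape $w$. The idea (the "escaping" trick of Shearer) is: conditioning on $Y_W=\vec 1$ cannot hurt the event $\{Y_v=1\}$ too much as long as $v$ has a neighbour $w$ outside $W$, because $w$ can "absorb" the forbidden-neighbouring-zeros constraint. Concretely, I would write, using that $\ShearerMeasure{G}{\vec{p}}$ forbids neighbouring $0$s \eqref{eq:shearerCharacterizationProbabilityMarginalNeighbouringZeros} and that $w\AdjacentTo v$,
\[
	\ShearerMeasure{G}{\vec{p}}(Y_v=0,Y_W=\vec 1)
	=\ShearerMeasure{G}{\vec{p}}(Y_v=0,Y_w=1,Y_W=\vec 1)
	\le\ShearerMeasure{G}{\vec{p}}(Y_w=1,Y_W=\vec 1),
\]
hmm, that gives the wrong direction too. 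Let me think once more. We want $\alpha_W^v\ge q_w$, i.e. $\ShearerMeasure{G}{\vec{p}}(Y_v=1\mid Y_W=\vec 1)\ge q_w=1-p_w$. Equivalently $\ShearerMeasure{G}{\vec{p}}(Y_v=0\mid Y_W=\vec 1)\le p_w$. Now $\{Y_v=0\}$ forces $\{Y_w=1\}$ since $v\AdjacentTo w$ and neighbouring zeros are forbidden, so
\[
	\ShearerMeasure{G}{\vec{p}}(Y_v=0\mid Y_W=\vec 1)
	\le\ShearerMeasure{G}{\vec{p}}(Y_w=1\mid Y_W=\vec 1)
	=\alpha_W^w(\vec p)\le p_w,
\]
the last step by \eqref{eq:shearerOVOEPBoundsAbove} applied to $(W,w)$ (legitimate since $w\notin W$, as $w$ is an escape). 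This is clean and short.

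**Main obstacle.**
The only real subtlety is justifying \eqref{eq:shearerOVOEPBoundsAbove} in full generality — in particular making sure lemma \ref{lem:shearerMinimality} is being applied with a valid conditioning set (need $\ShearerCriticalFunction{\Graph{W}}(\vec p)>0$, which is part of the standing hypothesis "$\ShearerOVOEP{W}{v}$ is defined") and that $\BernoulliProductField{\vec{p}}{V}$ genuinely lies in $\DependencyClassWeak{G}(\vec p)$, which is immediate since for a product field the conditional probability equals $p_v$. One should also note the degenerate cases: if $p_v=1$ then $q_w\le 1=\alpha_W^v$ trivially, and if some $q_w=0$ the bound \eqref{eq:shearerOVOEPBoundsBelow} is vacuous; otherwise everything is a conditional probability of a genuine event of positive mass (mass $\ShearerCriticalFunction{\Graph{W}}(\vec p)>0$), so all the conditionings above are well-defined. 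I expect no genuinely hard step — this proposition is a bookkeeping consequence of the minimality lemma together with the forbidden-neighbouring-zeros property \eqref{eq:shearerCharacterizationProbabilityMarginalNeighbouringZeros}.
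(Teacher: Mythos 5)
Your proof is correct, but it follows a genuinely different route from the paper's. The paper stays entirely inside the OVOEP calculus: for \eqref{eq:shearerOVOEPBoundsAbove} it writes the fundamental identity \eqref{eq:shearerOVOEPFundamentalIdentity} and observes that the denominator is a product of probabilities, hence at most $1$, giving $\ShearerOVOEP{W}{v}(\vec{p})\le 1-q_v=p_v$; for \eqref{eq:shearerOVOEPBoundsBelow} it writes the fundamental identity for the pair $(W\uplus\Set{v},w)$, in which $\ShearerOVOEP{W}{v}(\vec{p})$ appears among the denominator factors, and uses $0\le\ShearerOVOEP{W\uplus\Set{v}}{w}(\vec{p})\le 1-q_w/\ShearerOVOEP{W}{v}(\vec{p})$. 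You instead get the upper bound by testing the minimality lemma \ref{lem:shearerMinimality} (with $U=W\uplus\Set{v}$) against the product field $\BernoulliProductField{\vec{p}}{V}\in\DependencyClassWeak{G}(\vec{p})$, and the lower bound probabilistically from the forbidden-neighbouring-zeros property \eqref{eq:shearerCharacterizationProbabilityMarginalNeighbouringZeros}: $\Set{Y_v=0}\subseteq\Set{Y_w=1}$ a.s.\ since $w\AdjacentTo v$, so $1-\ShearerOVOEP{W}{v}(\vec{p})\le\ShearerOVOEP{W}{w}(\vec{p})\le p_w$, using your upper bound for $(W,w)$, which is admissible because $w\notin W$. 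Both arguments are short and sound; yours is arguably more transparent probabilistically and sidesteps the (implicit) well-definedness of $\ShearerOVOEP{W\uplus\Set{v}}{w}(\vec{p})$ when $\ShearerCriticalFunction{\Graph{W\uplus\Set{v}}}(\vec{p})$ vanishes at the boundary of $\PShearerSet{G}$, while the paper's version is self-contained in the fundamental-identity framework (no appeal to lemma \ref{lem:shearerMinimality}) and is exactly the manipulation reused afterwards, e.g.\ in proposition \ref{prop:shearerOVOEPDecreasing}. Your attention to the degenerate cases ($\ShearerCriticalFunction{\Graph{W}}(\vec{p})>0$ forcing $p_u>0$ for $u\in W$, so all conditionings are legitimate) is appropriate and closes the only real gap such an argument could have.
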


\begin{proof}
We use the fundamental identity \eqref{eq:shearerOVOEPFundamentalIdentity} to see that
\begin{equation*}
	\ShearerOVOEP{W}{v}(\vec{p})
	=1-\frac{q_v}{\prod \ShearerOVOEP{\star}{\star}(\vec{p})}
	\le 1-q_v = p_v\,.
\end{equation*}
Likewise, if $(W,v)$ is escaping with escape $w\in\Neighbours{v}\setminus W$, then \eqref{eq:shearerOVOEPFundamentalIdentity} yields
\begin{equation*}
  0
  \le \ShearerOVOEP{W\uplus\Set{v}}{w}(\vec{p})
  =  1-\frac{q_w}{\ShearerOVOEP{W}{v}(\vec{p}) \prod \ShearerOVOEP{\star}{\star}(\vec{p})}
 \le 1-\frac{q_w}{\ShearerOVOEP{W}{v}(\vec{p})}\,
\end{equation*}
hence $q_w\le\ShearerOVOEP{W}{v}(\vec{p})$.
\end{proof}

\begin{Prop}\label{prop:shearerOVOEPDecreasing}
Let $\vec{p}\in\PShearerSet{G}$. Then $\ShearerOVOEP{W}{v}(\vec{p})$ decreases, as $W$ increases.
\end{Prop}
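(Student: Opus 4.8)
The plan is to show that $\ShearerOVOEP{W'}{v}(\vec{p})\le\ShearerOVOEP{W}{v}(\vec{p})$ whenever $W\subseteq W'$ are finite with $v\notin W'$ and both one vertex open extension probabilities are defined (i.e. $\ShearerCriticalFunction{\Graph{W'}}(\vec{p})>0$, which forces $\ShearerCriticalFunction{\Graph{W}}(\vec{p})>0$ since $\ShearerMeasure{G}{\vec{p}}(Y_{W'}=\vec{1})\le\ShearerMeasure{G}{\vec{p}}(Y_{W}=\vec{1})$ by \eqref{eq:shearerMinimalityCriticalFunction}). Inserting one vertex at a time along a chain $W=W_0\subsetneq W_1\subsetneq\dotsb\subsetneq W_k=W'$, it suffices to treat $W'=W\uplus\Set{u}$ with $u\notin W\cup\Set{v}$, and I would do this by strong induction on $\Cardinality{W}$. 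The degenerate case $\ShearerOVOEP{W}{v}(\vec{p})=0$ is immediate: then $\ShearerCriticalFunction{\Graph{W\cup\Set{v}}}(\vec{p})=0$, hence $\ShearerCriticalFunction{\Graph{W'\cup\Set{v}}}(\vec{p})=0$ and $\ShearerOVOEP{W'}{v}(\vec{p})=0$ as well; so assume henceforth that all the critical functions appearing are strictly positive.

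The base case $W=\emptyset$ is exactly $\ShearerOVOEP{\Set{u}}{v}(\vec{p})\le p_v=\ShearerMeasure{G}{\vec{p}}(Y_v=1)=\ShearerOVOEP{\emptyset}{v}(\vec{p})$, i.e. the upper bound \eqref{eq:shearerOVOEPBoundsAbove} of proposition \ref{prop:shearerOVOEPBounds} together with \eqref{eq:shearerCharacterizationProbabilityMarginalParameter}. For the inductive step write $W\cap\Neighbours{v}=\Set{w_1,\dotsc,w_m}$ and split on whether $u\in\Neighbours{v}$. If $u\notin\Neighbours{v}$, then $(W\uplus\Set{u})\cap\Neighbours{v}=\Set{w_1,\dotsc,w_m}$ too; if $m=0$ both probabilities equal $p_v$ and we are done, while if $m\ge1$ the fundamental identity \eqref{eq:shearerOVOEPFundamentalIdentity} expresses $\ShearerOVOEP{W}{v}(\vec{p})$ and $\ShearerOVOEP{W\uplus\Set{u}}{v}(\vec{p})$ as $1-q_v$ divided by $\prod_{i=1}^m\ShearerOVOEP{W\setminus\Set{w_i,\dotsc,w_m}}{w_i}(\vec{p})$, respectively $\prod_{i=1}^m\ShearerOVOEP{(W\setminus\Set{w_i,\dotsc,w_m})\uplus\Set{u}}{w_i}(\vec{p})$. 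For each $i$ the base condition $W\setminus\Set{w_i,\dotsc,w_m}$ has strictly fewer than $\Cardinality{W}$ elements (it omits $w_m$), so the induction hypothesis gives $\ShearerOVOEP{(W\setminus\Set{w_i,\dotsc,w_m})\uplus\Set{u}}{w_i}(\vec{p})\le\ShearerOVOEP{W\setminus\Set{w_i,\dotsc,w_m}}{w_i}(\vec{p})$. Hence the denominator for $(W\uplus\Set{u},v)$ is at most that for $(W,v)$; since $q_v\ge0$ and all factors are positive, $\ShearerOVOEP{W\uplus\Set{u}}{v}(\vec{p})\le\ShearerOVOEP{W}{v}(\vec{p})$.

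If instead $u\in\Neighbours{v}$, order the neighbours of $v$ in $W\uplus\Set{u}$ so that $u$ is last, $w_{m+1}:=u$ (the product in \eqref{eq:shearerOVOEPFundamentalIdentity} is independent of this ordering, being $\ShearerMeasure{G}{\vec{p}}(Y_{(W\cup\Set{u})\cap\Neighbours{v}}=\vec{1}\mid Y_{(W\cup\Set{u})\setminus\Neighbours{v}}=\vec{1})$). For $i\le m$ the condition $(W\uplus\Set{u})\setminus\Set{w_i,\dotsc,w_{m+1}}$ equals $W\setminus\Set{w_i,\dotsc,w_m}$, and for $i=m+1$ it equals $W$; thus the denominator of $\ShearerOVOEP{W\uplus\Set{u}}{v}(\vec{p})$ is that of $\ShearerOVOEP{W}{v}(\vec{p})$ multiplied by the extra factor $\ShearerOVOEP{W}{u}(\vec{p})\in(0,1]$. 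Multiplying by a number $\le1$ can only shrink the denominator, so again $\ShearerOVOEP{W\uplus\Set{u}}{v}(\vec{p})\le\ShearerOVOEP{W}{v}(\vec{p})$, completing the induction.

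The point to get right — and the place a naive proof goes wrong — is the choice of induction. Writing $\ShearerOVOEP{W\uplus\Set{u}}{v}(\vec{p})=\ShearerOVOEP{W}{v}(\vec{p})\,\ShearerOVOEP{W\uplus\Set{v}}{u}(\vec{p})/\ShearerOVOEP{W}{u}(\vec{p})$ by the chain rule only reduces the claim for $(W\uplus\Set{u},v)$ to the claim for $(W\uplus\Set{v},u)$, whose condition set has the same cardinality, so the recursion does not terminate. Using the fundamental identity \eqref{eq:shearerOVOEPFundamentalIdentity} to peel a neighbour of $v$ off the condition is exactly what lowers $\Cardinality{W}$ and breaks this circularity; the case $u\in\Neighbours{v}$ then needs no induction at all. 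The only remaining nuisance is the bookkeeping of strict positivity of the critical functions involved, which is dispatched by the monotonicity $\ShearerMeasure{G}{\vec{p}}(Y_{W'}=\vec{1})\le\ShearerMeasure{G}{\vec{p}}(Y_{W}=\vec{1})$ noted above and by treating the vanishing case separately.
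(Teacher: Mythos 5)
Your proof is correct and follows essentially the same route as the paper: induction on $\Cardinality{W}$ over one-vertex extensions, splitting on whether the added vertex is adjacent to $v$, applying the fundamental identity \eqref{eq:shearerOVOEPFundamentalIdentity} with the induction hypothesis in the non-adjacent case and absorbing the extra factor $\le 1$ in the adjacent case. Your explicit treatment of the degenerate (vanishing critical function) case and of the ordering-independence of the telescoping product only makes the bookkeeping slightly more careful than the paper's.
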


\begin{proof}
If $p_v=0$, then $\ShearerOVOEP{W}{v}(\vec{p})=0$ for all $W$. If $p_w=0$ for $w\in W$ and $v$ connected to $w$ then $\ShearerOVOEP{W}{v}(\vec{p})$ is not defined. Hence for the remainder of this proof assume $\vec{0}\StrictlyLess\vec{p}$. We prove the statement by simultaneous induction for all $v$ over the cardinality of $W$. The base case is
\begin{equation*}
\ShearerOVOEP{\emptyset}{v}(\vec{p})=1-q_v
\begin{cases}
	\ge \frac{1-q_v-q_w}{1-q_w}=\ShearerOVOEP{\Set{w}}{v}(\vec{p}) &\text{if }v\AdjacentTo w\\
	= 1-q_v=\ShearerOVOEP{\Set{w}}{v}(\vec{p}) &\text{if }v\NotAdjacentTo w\, .
\end{cases}
\end{equation*}

For the induction step we add just one vertex $w$ to $W$ and set $U:=W\uplus\Set{w}$. Let $\Set{w_1,\dotsc,w_m}:=\Neighbours{v}\cap U$. First assume that $w\NotAdjacentTo v$. Using the fundamental identity \eqref{eq:shearerOVOEPFundamentalIdentity} we have
\begin{equation*}
\ShearerOVOEP{U}{v}(\vec{p})
= 1-\frac{q_v}{
	\prod_{i=1}^m
	\ShearerOVOEP{U\setminus\Set{w_i,\dotsc,w_m}}{w_i}(\vec{p})
	}
\le 1-\frac{q_v}{
	\prod_{i=1}^m
	\ShearerOVOEP{W\setminus\Set{w_i,\dotsc,w_m}}{w_i}(\vec{p})
	}
= \ShearerOVOEP{W}{v}(\vec{p})\,.
\end{equation*}
Secondly assume that $v\AdjacentTo w=w_m$. Hence
\begin{equation*}
\ShearerOVOEP{U}{v}(\vec{p})
= 1-\frac{q_v}{%
	\prod_{i=1}^m
	\ShearerOVOEP{U\setminus\Set{w_i,\dotsc,w_m}}{w_i}(\vec{p})
	}
\le 1-\frac{q_v}{
	\prod_{i=1}^{m-1}
	\ShearerOVOEP{W\setminus\Set{w_i,\dotsc,w_{m-1}}}{w_i}(\vec{p})
	}
= \ShearerOVOEP{W}{v}(\vec{p})\,.
\end{equation*}
\end{proof}
\subsection{Domination}
\label{sec:domination}
In this section we prove inclusion \LabelShearerImpliesUniformDomination{} from figure \ref{fig:structureOfInclusionsInShearerDominationEquivalenceInhomogeneous}, that is $\PShearerSetInterior{G}\subseteq\PUniformDominationSet{\DependencyClassWeak{G}}$.  We split the proof in two and deal with finite and infinite $G$ separately in proposition \ref{prop:shearerImpliesUniformDominationFinite} and \ref{prop:shearerImpliesUniformDominationInfinite}, respectively. Additionally \eqref{eq:shearerImpliesUniformDominationFinite} and \eqref{eq:shearerImpliesUniformDominationInfinite} combined yield a proof of \eqref{eq:uniformlyDominatedVector} from theorem \ref{thm:uniformlyDominatedVector}.\\

On a finite graph our approach is direct: proposition \ref{prop:shearerImpliesUniformDominationFinite} uses the minimality of $\ShearerMeasure{G}{\vec{p}}$ to construct a homogeneous nontrivial dominated vector $\vec{0}\StrictlyLess\vec{c}\in\DominatedVectors{\DependencyClassWeak{G}(\vec{p})}$. For an infinite graph the situation is more involved and we use a technique of \NameAntalPisztora{} \cite[pages 1040--1041]{AntalPisztora_chemical}: Suppose you have a $Y\in\DependencyClassWeak{G}(\vec{p})$ with $\vec{0}\StrictlyLess\vec{y}\in\DominatedVectors{Y}$. Let $X$ be $\BernoulliProductField{V}{\vec{x}}$ with $\vec{0}\StrictlyLess\vec{x}$ independently of $Y$ and set $Z:=X\land Y$. Then $\vec{0}\StrictlyLess\vec{x}\,\vec{y}\in\DominatedVectors{Z}\subseteq\DominatedVectors{Y}$, that is an independent non-trivial \Iid{} perturbation does not change the quality of $Y$'s domination behaviour.\\

Proposition \ref{prop:dominationConditionalMinoration} uses this perturbation to blame adjacent $0$ realizations of $Z$ on $X$ instead of $Y$, leading to the uniform technical minorization \eqref{eq:dominationConditionalMinoration}:
\begin{equation*}
	\Proba(Z_v=1|Z_W=\vec{s}_W)\ge q_v\ShearerOVOEP{W}{v}(\vec{p})\,,
\end{equation*}
connecting the domination problem with \NameShearersMeasure{}. Finally in proposition \ref{prop:shearerImpliesUniformDominationInfinite} we ensure to look at only escaping $(W,v)$s, hence getting rid of the $\ShearerOVOEP{W}{v}(\vec{p})$ term. This allows us to apply proposition \ref{prop:russoInhomogeneousExtension} and guarantee stochastic domination of a non-trivial BPF.

\begin{Prop}\label{prop:shearerImpliesUniformDominationFinite}
Let $G$ be finite and $\vec{p}\in\PShearerSetInterior{G}$. Let $X$ be $\BernoulliProductField{c}{V}$-distributed with
\begin{equation}\label{eq:shearerImpliesUniformDominationFinite}
	c:=1-\left(1-\ShearerCriticalFunction{G}(\vec{p})\right)^{1/\Cardinality{V}}>0\,.
\end{equation}
Then every $Y\in\DependencyClassWeak{G}(\vec{p})$ fulfils $Y\Stochastically{\ge}X$, hence $\vec{p}\in\PUniformDominationSet{\DependencyClassWeak{G}}$.
\end{Prop}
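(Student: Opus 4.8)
The plan is to enumerate the finitely many vertices of $G$ as $v_1, \dots, v_n$ (with $n = \Cardinality{V}$) and verify the hypothesis \eqref{eq:russoInhomogeneousExtension} of Proposition \ref{prop:russoInhomogeneousExtension} — or rather its finite analogue obtained by combining with Lemma \ref{lem:dominationAndSubfields} — for an arbitrary $Y\in\DependencyClassWeak{G}(\vec{p})$. Concretely, I would show that for each $k$ and each configuration $\vec{s}\in\Configurations{\IntegerSet{k}}$ on the first $k$ vertices with $\Proba(Y_{\IntegerSet{k}}=\vec{s})>0$, we have
\begin{equation*}
	\Proba(Y_{v_{k+1}}=1\mid Y_{v_1}=s_1,\dots,Y_{v_k}=s_k)\ge c\,.
\end{equation*}
Granting this, Proposition \ref{prop:russoInhomogeneousExtension} (applied to the finite index set, which is the trivial truncation of its statement) gives $Y\Stochastically{\ge}\BernoulliProductField{c}{V}=X$, and since this holds for every $Y$ in the class, $\vec{p}\in\PUniformDominationSet{\DependencyClassWeak{G}}$.

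The heart of the matter is the lower bound on the one-step conditional probability. First I would reduce to the worst case $\vec{s}=\vec{1}$: the weak dependency condition \eqref{eq:dependencyClassWeak} together with monotonicity-type manipulations should show that conditioning on more $1$s only lowers the conditional probability of seeing a further $1$, so it suffices to bound $\Proba(Y_{v_{k+1}}=1\mid Y_{v_1}=\dots=Y_{v_k}=1)$ from below. (More carefully, I would argue directly: the event $\Set{Y_{\IntegerSet{k}}=\vec{1}}$ has probability at least $\ShearerCriticalFunction{\Graph{\IntegerSet{k}}}(\vec{p})>0$ by \eqref{eq:shearerMinimalityCriticalFunction} in Lemma \ref{lem:shearerMinimality}, so the conditional probability is well defined, and then I would compare conditional-on-$\vec{1}$ against conditional-on-general-$\vec{s}$.) For the bound on the all-ones conditioning, I would invoke the minimality \eqref{eq:shearerMinimalityOVOEP}: with $W=\IntegerSet{k}$ and $U=\IntegerSet{k+1}$,
\begin{equation*}
	\Proba(Y_{v_{k+1}}=1\mid Y_{\IntegerSet{k}}=\vec{1})
	\ge \frac{\ShearerCriticalFunction{\Graph{\IntegerSet{k+1}}}(\vec{p})}{\ShearerCriticalFunction{\Graph{\IntegerSet{k}}}(\vec{p})}\,.
\end{equation*}
The product of these ratios telescopes to $\ShearerCriticalFunction{G}(\vec{p})$ (the $k=0$ term being $\ShearerCriticalFunction{\Graph{\Set{v_1}}}(\vec{p})=p_{v_1}$, and the empty graph contributing $1$), and I would use the AM–GM / pigeonhole observation that a product of $n$ numbers in $[0,1]$ equal to $\ShearerCriticalFunction{G}(\vec{p})$ forces at least one factor to be $\ge \ShearerCriticalFunction{G}(\vec{p})^{1/n}$. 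That is not quite enough as stated, since I need \emph{every} factor bounded below, not just one — so instead I would observe that each individual ratio $\ShearerCriticalFunction{\Graph{\IntegerSet{k+1}}}(\vec{p})/\ShearerCriticalFunction{\Graph{\IntegerSet{k}}}(\vec{p})$ is itself the \NameShearer{} one-vertex open extension probability $\ShearerOVOEP{\IntegerSet{k}}{v_{k+1}}(\vec{p})$, and use the clean lower bound on such quantities together with the telescoping constraint to extract the uniform value $1-(1-\ShearerCriticalFunction{G}(\vec{p}))^{1/n}$.

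Reconsidering, the cleanest route — and the one I would actually write — bypasses the reduction subtlety: I would fix $v=v_{k+1}$ and a \emph{realizable} conditioning configuration $\vec{s}$ on $\IntegerSet{k}$, let $W\subseteq\IntegerSet{k}$ be the set of indices where $s$ equals $1$ that are neighbours of $v$, and use the weak-dependency inequality to write $\Proba(Y_v=1\mid Y_{\IntegerSet{k}}=\vec{s})\ge p_v$ whenever $v$ has an escape outside the conditioning — but since $G$ is finite and we are conditioning potentially on all neighbours of $v$, there need not be an escape, and here the finite-graph bound \eqref{eq:shearerImpliesUniformDominationFinite} genuinely uses the global quantity $\ShearerCriticalFunction{G}(\vec{p})$ rather than a local $q_w$. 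So the argument must be the telescoping one above, and the main obstacle is precisely making rigorous that the worst conditioning is the all-ones one and that the per-step conditional probability of $Y$ is bounded below by the corresponding \NameShearer{} ratio. For the latter I expect to need a small lemma: for any $Y\in\DependencyClassWeak{G}(\vec{p})$ and any nested $W\subseteq U$ with $\ShearerCriticalFunction{\Graph{W}}(\vec{p})>0$ and any realizable $\vec{s}_W$, one has $\Proba(Y_{U\setminus W}=\vec{1}\mid Y_W=\vec{s}_W)\ge \ShearerOVOEP{}{}$-type bounds — but Lemma \ref{lem:shearerMinimality} already supplies exactly the all-ones version, so the only real work is the monotone comparison $\Proba(Y_v=1\mid Y_{\IntegerSet{k}}=\vec{s})\ge \Proba(Y_v=1\mid Y_{\IntegerSet{k}}=\vec{1})$, which follows from the FKG-style fact that $\Set{Y_v=1}$ and the down-set $\Set{Y_{\IntegerSet{k}}\le\vec{1}}$-conditioning interact monotonically under the weak dependency hypothesis, or more elementarily from Proposition \ref{prop:russoInhomogeneousExtension}-style induction applied inside the conditioning.
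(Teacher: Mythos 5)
Your plan---order the vertices, verify the hypothesis of Proposition \ref{prop:russoInhomogeneousExtension} for $Y$ itself, and telescope Shearer ratios---breaks at the step you yourself flag as ``the only real work'': the comparison $\Proba(Y_v=1\mid Y_{\IntegerSet{k}}=\vec{s})\ge\Proba(Y_v=1\mid Y_{\IntegerSet{k}}=\vec{1})$ is false, and no uniform positive lower bound on $\Proba(Y_v=1\mid Y_{\IntegerSet{k}}=\vec{s})$ over all realizable $\vec{s}$ exists for general $Y\in\DependencyClassWeak{G}(\vec{p})$. Membership in the weak class only constrains conditioning on vertices \emph{outside} $\NeighboursIncluded{v}$; conditioning on neighbours is unconstrained, and no FKG-type positive association is available in this class. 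Concretely, let $G$ be a single edge $\Set{v,w}$ and $Y_v=Y_w$ a single Bernoulli($p$) variable; then $Y\in\DependencyClassWeak{G}(p)$, but $\Proba(Y_w=1\mid Y_v=0)=0$, so the Russo-type condition fails for every $c>0$ even though this $Y$ does dominate a non-trivial BPF. (Note also that in such examples the all-ones conditioning is the \emph{best} case, not the worst, so the inequality you need points the wrong way even where some monotone comparison holds.) This obstruction is precisely why the paper's treatment of the infinite case does not condition on $Y$ directly but on $Z:=X\land Y$ with an auxiliary independent $\BernoulliProductField{\vec{q}}{V}$-field $X$ (Proposition \ref{prop:dominationConditionalMinoration}), so that adjacent $0$s can be blamed on $X$; and, as you correctly observe, even that route degenerates on a finite graph where the last conditioned pairs have no escape.

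The paper's actual proof of the finite case avoids conditioning entirely and is much shorter: for any $f\in\MonotoneFunctionsOn{V}$, monotonicity gives
\begin{equation*}
	\Expect[f(X)]\le f(\vec{0})\,\Proba(X=\vec{0})+f(\vec{1})\,\Proba(X\not=\vec{0})
	= f(\vec{0})\bigl[1-\ShearerCriticalFunction{G}(\vec{p})\bigr]
	+ f(\vec{1})\,\ShearerCriticalFunction{G}(\vec{p})\,,
\end{equation*}
since the choice of $c$ makes $\Proba(X=\vec{0})=(1-c)^{\Cardinality{V}}=1-\ShearerCriticalFunction{G}(\vec{p})$; then the minimality bound $\Proba(Y_V=\vec{1})\ge\ShearerCriticalFunction{G}(\vec{p})$ from Lemma \ref{lem:shearerMinimality} together with $f(\vec{0})\le f(\vec{1})$ gives $\le f(\vec{0})\,\Proba(Y\not=\vec{1})+f(\vec{1})\,\Proba(Y=\vec{1})\le\Expect[f(Y)]$. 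So the global quantity $\ShearerCriticalFunction{G}(\vec{p})$ enters only through the single event $\Set{Y_V=\vec{1}}$ versus $\Set{X=\vec{0}}$, which is exactly what the exponent $1/\Cardinality{V}$ in \eqref{eq:shearerImpliesUniformDominationFinite} is calibrated for; no per-vertex conditional estimate is needed or available. If you want to salvage a sequential argument, you must first perturb $Y$ as in the infinite case, but then you obtain the weaker vector of \eqref{eq:uniformlyDominatedVectorInfinite}-type bounds, not the constant $c$ of this proposition.
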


\begin{proof}
The choice of $\vec{p}$ implies that $\ShearerCriticalFunction{G}(\vec{p})>0$, therefore $c>0$, too. Let $f\in\MonotoneFunctionsOn{V}$ and $Y\in\DependencyClassWeak{G}(\vec{p})$. Then
\begin{align*}
	&\FirstAlign \Expect[f(X)]\\
	&= \sum_{\vec{s}\in\Configurations{V}} f(\vec{s})\,\Proba(X=\vec{s})\\
	&\le f(\vec{0})\,\Proba(X=\vec{0})+f(\vec{1})\,\Proba(X\not=\vec{0})
		&\text{monotonicity of }f\\
	&= f(\vec{0})(1-c)^{\Cardinality{V}}+f(\vec{1})[1-(1-c)^{\Cardinality{V}}]\\
	&= f(\vec{0})[1-\ShearerCriticalFunction{G}(\vec{p})]
	 + f(\vec{1})\,\ShearerCriticalFunction{G}(\vec{p})\\
	&\le f(\vec{0})\,\Proba(Y\not=\vec{1})+f(\vec{1})\,\Proba(Y=\vec{1})
		&\text{minimality of \NameShearersMeasure{} }
		       \eqref{eq:shearerMinimalityCriticalFunction}\\
	&\le \sum_{\vec{s}\in\Configurations{V}} f(\vec{s})\,\Proba(Y=\vec{s})
		&\text{monotonicity of }f\\
	&= \Expect[f(Y)]\,.
\end{align*}
Hence $X\Stochastically{\le}Y$. As $\vec{0}\StrictlyLess c\,\vec{1}$ we have $\vec{p}\in\PUniformDominationSet{\DependencyClassWeak{G}}$.
\end{proof}

\begin{Prop}\label{prop:shearerImpliesUniformDominationInfinite}
Let $G$ be infinite and connected. Let $\vec{1}\StrictlyGreater\vec{p}\in\PShearerSetInterior{G}$. Define the vector $\vec{c}$ by
\begin{equation}\label{eq:shearerImpliesUniformDominationInfinite}
	\ForAll v\in V:\quad
	c_v:=q_v\min\Set{q_w:w\in\Neighbours{v}}\,.
\end{equation}
Then $\vec{c}\StrictlyGreater\vec{0}$ and every $Y\in\DependencyClassWeak{G}(\vec{p})$ fulfils $Y\Stochastically{\ge}\BernoulliProductField{\vec{c}}{V}$, whence $\vec{p}\in\PUniformDominationSet{\DependencyClassWeak{G}}$.
\end{Prop}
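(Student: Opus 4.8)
The plan is to combine the $\land$-perturbation technique with the escaping bound of Proposition~\ref{prop:shearerOVOEPBounds} and the one-dimensional domination criterion of Proposition~\ref{prop:russoInhomogeneousExtension}. First, since $\vec{1}\StrictlyGreater\vec{p}$ we have $q_v>0$ for all $v$, and since $G$ is connected and infinite every vertex has at least one neighbour, so $c_v=q_v\min\Set{q_w:w\in\Neighbours{v}}>0$; thus $\vec{c}\StrictlyGreater\vec{0}$. Now fix $Y\in\DependencyClassWeak{G}(\vec{p})$, let $X$ be $\BernoulliProductField{\vec{q}}{V}$-distributed independently of $Y$, and put $Z:=X\land Y$. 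By \eqref{eq:dominationVWMM_Chain} we have $Z\Stochastically{\le}Y$, so it suffices to prove $Z\Stochastically{\ge}\BernoulliProductField{\vec{c}}{V}$; this is exactly where $Z$ rather than $Y$ is used, because the independent factor $X_v$ absorbs the adjacent-zero events through the technical minorisation \eqref{eq:dominationConditionalMinoration} of Proposition~\ref{prop:dominationConditionalMinoration}, namely $\Proba(Z_v=1\,|\,Z_W=\vec{s}_W)\ge q_v\,\ShearerOVOEP{W'}{v}(\vec{p})$, where $W'\subseteq W$ collects the coordinates on which $\vec{s}_W$ equals $1$ (and $\ShearerOVOEP{W'}{v}(\vec{p})$ is well defined because $\vec{p}\in\PShearerSetInterior{G}$ forces $\ShearerCriticalFunction{\Graph{W'}}(\vec{p})>0$).

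Second, I would enumerate $V=\Set{v_1,v_2,\dotsc}$ so that every $v_n$ has a neighbour among $\Set{v_{n+1},v_{n+2},\dotsc}$; such an enumeration exists because $G$ is infinite, connected and locally finite (exhaust $G$ by finite connected subgraphs and interleave them diagonally so that the chosen successor edges always point forward). Fix $n\ge 0$, put $W:=\Set{v_1,\dotsc,v_n}$, let $\vec{s}_W\in\Configurations{W}$ be arbitrary and let $W'\subseteq W$ be its $1$-set. By construction $v_{n+1}$ has a neighbour $w\in\Set{v_{n+2},\dotsc}$, hence $w\in\Neighbours{v_{n+1}}\setminus W\subseteq\Neighbours{v_{n+1}}\setminus W'$, so $(W',v_{n+1})$ is escaping in the sense of Definition~\ref{def:shearerOVOEPEscaping} with escape $w$. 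Combining \eqref{eq:dominationConditionalMinoration} with the lower bound \eqref{eq:shearerOVOEPBoundsBelow} of Proposition~\ref{prop:shearerOVOEPBounds} gives
\[
	\Proba(Z_{v_{n+1}}=1\,|\,Z_W=\vec{s}_W)
	\ge q_{v_{n+1}}\,\ShearerOVOEP{W'}{v_{n+1}}(\vec{p})
	\ge q_{v_{n+1}}\,q_w
	\ge q_{v_{n+1}}\min\Set{q_u:u\in\Neighbours{v_{n+1}}}
	= c_{v_{n+1}}\,,
\]
uniformly in $\vec{s}_W$ (the case $n=0$, i.e.\ $W=W'=\emptyset$, is the marginal statement $\Proba(Z_{v_1}=1)=q_{v_1}p_{v_1}\ge c_{v_1}$ and follows from the same computation, using $q_w\le\ShearerOVOEP{\emptyset}{v_1}(\vec{p})=p_{v_1}$). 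Proposition~\ref{prop:russoInhomogeneousExtension}, applied to $Z$ along the enumeration $v_1,v_2,\dotsc$, then yields $Z\Stochastically{\ge}\BernoulliProductField{\vec{c}}{V}$, whence $Y\Stochastically{\ge}\BernoulliProductField{\vec{c}}{V}$. As $Y\in\DependencyClassWeak{G}(\vec{p})$ was arbitrary and $\vec{c}\StrictlyGreater\vec{0}$, we conclude $\vec{p}\in\PUniformDominationSet{\DependencyClassWeak{G}}$.

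The two places that need care are: (i) the combinatorial construction of an enumeration in which \emph{every} vertex has a later neighbour — this uses infiniteness in an essential way (a finite graph has a last vertex, which has no later neighbour), and one must resist routing every vertex along a single fixed ray, since that creates vertices with infinitely many predecessors; a diagonal exhaustion avoids this; and (ii) the minorisation \eqref{eq:dominationConditionalMinoration} itself (Proposition~\ref{prop:dominationConditionalMinoration}), which is the genuinely probabilistic step: one conditions on $Z_W=\vec{s}_W$, uses the independence of $X_v$ from $(X_W,Y)$ to peel off the factor $q_v$, and then invokes the minimality of \NameShearersMeasure{} on $\DependencyClassWeak{G}(\vec{p})$ (Lemma~\ref{lem:shearerMinimality}) to bound the remaining conditional probability of $Y_v=1$ from below by $\ShearerOVOEP{W'}{v}(\vec{p})$. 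Once \eqref{eq:dominationConditionalMinoration} is available, the present proposition is, as above, essentially the art of choosing the enumeration so that the awkward factor $\ShearerOVOEP{W'}{v}(\vec{p})$ is always tamed by an escape.
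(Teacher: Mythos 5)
Your probabilistic core is exactly the paper's: perturb $Y$ by an independent $\BernoulliProductField{\vec{q}}{V}$-field, invoke proposition \ref{prop:dominationConditionalMinoration}, remove the factor $\ShearerOVOEP{W}{v}(\vec{p})$ via the escaping bound \eqref{eq:shearerOVOEPBoundsBelow}, and conclude with proposition \ref{prop:russoInhomogeneousExtension} and \eqref{eq:dominationVWMM_Chain}. One harmless misquotation: \eqref{eq:dominationConditionalMinoration} bounds $\Proba(Z_v=1\mid Z_W=\vec{s}_W)$ from below by $q_v\,\ShearerOVOEP{W}{v}(\vec{p})$ with the \emph{full} conditioning set $W$, not by $q_v\,\ShearerOVOEP{W'}{v}(\vec{p})$ with the $1$-set $W'$ of $\vec{s}_W$ (by proposition \ref{prop:shearerOVOEPDecreasing} the latter would be a strictly stronger claim, which the paper does not prove). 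This costs you nothing: your escape $w$ lies outside all of $W$, so $(W,v_{n+1})$ is itself escaping and \eqref{eq:shearerOVOEPBoundsBelow} yields $\ShearerOVOEP{W}{v_{n+1}}(\vec{p})\ge q_w$ directly; the rest of your chain of inequalities, the base case, the positivity of $\vec{c}$ and the final appeal to \eqref{eq:dominationVWMM_Chain} are fine.

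The genuine difference from the paper, and the one real gap, is combinatorial. You apply proposition \ref{prop:russoInhomogeneousExtension} once, globally, along an enumeration $v_1,v_2,\dotsc$ of all of $V$ in which every vertex has a later neighbour; the paper instead orders each \emph{finite} $W$ separately (repeatedly peel off a vertex of the remaining set having a neighbour outside it, which exists because $G$ is infinite and connected) and then passes to $V$ via lemma \ref{lem:dominationAndSubfields}. Such a global enumeration does exist for every infinite, locally finite, connected $G$, but your one-line justification does not establish it: a ``diagonal interleaving of an exhaustion by finite connected subgraphs'' already fails on a ray $r_0,r_1,\dotsc$ with one pendant vertex $u_i$ attached to each $r_i$ --- enumerating ball by ball, in whatever interleaved order of the blocks, each $u_i$ appears only after its unique neighbour $r_i$ and then has no later neighbour. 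A correct construction must place such dead-end vertices before their attachment points, e.g.\ by maintaining the invariant that every component of the complement of the enumerated set is infinite and, when a target vertex $w$ is enumerated, first adjoining the finite components of its component minus $w$, ordered by the paper's reverse peeling inside that infinite component. Since this lemma is precisely where your write-up hand-waves, either supply such an argument or, more economically, follow the paper: prove $Z_W\Stochastically{\ge}\BernoulliProductField{\vec{c}_W}{W}$ for each finite $W$ (where the required escaping order is immediate) and finish with lemma \ref{lem:dominationAndSubfields}. With either repair your proof is correct and otherwise coincides with the paper's.
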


\begin{Rem}
Proposition \ref{prop:shearerImpliesUniformDominationInfinite} motivated the definition of ``escaping'' pairs: it allows for non-trivial lower bounds for escaping $\ShearerOVOEP{W}{v}(\vec{p})$, in a correctly chosen ordering of a finite subgraph. Arbitrary $\ShearerOVOEP{W}{v}(\vec{p})$ defy control at the boundary of $\PShearerSetInterior{G}$.
\end{Rem}

\begin{proof}
We show, that $Y_W\Stochastically{\ge}\BernoulliProductField{\vec{c}_W}{W}$, for every finite $W\subsetneq V$. Admitting this momentarily, lemma \ref{lem:dominationAndSubfields} asserts that $Y\Stochastically{\ge}\BernoulliProductField{\vec{c}}{V}$. Conclude as $\vec{p}\StrictlyLess\vec{1}$ implies, that $\vec{c}\StrictlyGreater\vec{0}$.\\

Choose a finite $W\subsetneq V$ and let $\Cardinality{W}=:n$. As $G$ is connected and infinite, there is a vertex $v_n\in W$ which has a neighbour $w_n$ in $V\setminus W$. It follows, that $(W\setminus\Set{v_n},v_n)$ is escaping with escape $w_n\in\Neighbours{v_n}\setminus W$. Apply this argument recursively to $W\setminus\Set{v_n}$ and thus produce a total ordering $v_1\prec\dotsc\prec v_n$ of $W$, where, setting $W_i:=\Set{v_1,\dotsc,v_{i-1}}$, every $(W_i,v_i)$ is escaping with escape $w_i\in\Neighbours{v_i}\setminus W_i$.\\

Let $X$ be $\BernoulliProductField{\vec{q}}{V}$-distributed independently of $Y$. Set $Z:=Y\land X$. Then \eqref{eq:dominationConditionalMinoration} from proposition \ref{prop:dominationConditionalMinoration} and the minoration for escaping pairs \eqref{eq:shearerOVOEPBoundsBelow} combine to
\begin{equation*}
	\ForAll i\in\IntegerSet{n},
	\ForAll \vec{s}_{W_i}\in\Configurations{W_i}:\quad
		\Proba(Z_{v_i}=1|Z_{W_i}=\vec{s}_{W_i})
		\ge\ShearerOVOEP{W_i}{v_i}(\vec{p}) q_{v_i}
		\ge q_{w_i} q_{v_i}
		\ge c_{v_i}\,.
\end{equation*}
This is sufficient for proposition \ref{prop:russoInhomogeneousExtension} to construct a coupling with $Z_W\Stochastically{\ge}\BernoulliProductField{\vec{c}_W}{W}$. Apply \eqref{eq:dominationVWMM_Chain} to get
\begin{equation*}
	Y_W
	\Stochastically{\ge}Y_W\land X_W
	= Z_W
	\Stochastically{\ge}\BernoulliProductField{\vec{c}_W}{W}
\end{equation*}
and extend this to all of $V$ with the help of lemma \ref{lem:dominationAndSubfields}.
\end{proof}

\begin{Prop}\label{prop:dominationConditionalMinoration}
Let $\vec{1}\StrictlyGreater\vec{p}\in\PShearerSetInterior{G}$ and $Y\in\DependencyClassWeak{G}(\vec{p})$. Let $X$ be $\BernoulliProductField{\vec{q}}{V}$-distributed independently of $Y$ and set $Z:=X\land Y$. We claim that for all admissible $(W,v)$
\begin{equation}\label{eq:dominationConditionalMinoration}
	\ForAll \vec{s}_W\in\Configurations{W}:\quad
	\Proba(Z_v=1|Z_W=\vec{s}_W)
	\ge  q_v\ShearerOVOEP{W}{v}(\vec{p}).
\end{equation}
\end{Prop}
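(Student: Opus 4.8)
The plan is to bound the conditional probability $\Proba(Z_v=1\mid Z_W=\vec{s}_W)$ from below by carefully separating the contributions of $X$ and $Y$. The key observation is that $Z_v=1$ iff both $X_v=1$ and $Y_v=1$, and since $X$ is independent of $Y$ with $\Proba(X_v=1)=q_v$, the factor $q_v$ in the bound should come directly from $X_v$. The first step is therefore to write, using independence of $X$ and $Y$ and conditioning on the value of $Y_W$ (which together with $X_W$ determines $Z_W=Y_W\land X_W$),
\begin{equation*}
	\Proba(Z_v=1\mid Z_W=\vec{s}_W)
	= q_v\,\Proba(Y_v=1\mid Z_W=\vec{s}_W)\,,
\end{equation*}
because on the event $\{Z_W=\vec{s}_W\}$, the variable $X_v$ is still independent of everything relevant (it is not part of the conditioning, which involves only $X_W$ and $Y_W$), contributing exactly $\Proba(X_v=1)=q_v$.

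Next I would show $\Proba(Y_v=1\mid Z_W=\vec{s}_W)\ge\ShearerOVOEP{W}{v}(\vec{p})$. Writing $\{Z_W=\vec{s}_W\}$ as a disjoint union over the compatible configurations of $Y_W$: for a coordinate $w$ with $s_w=1$ we must have $Y_w=1$ (and $X_w=1$), while for $w$ with $s_w=0$ we may have any $Y_w$ with $X_w$ chosen accordingly. Thus conditioning on $Z_W=\vec{s}_W$ amounts to conditioning on $Y$ restricted to a sub-event where the coordinates of $W$ with $s_w=1$ are forced to $1$ and the others range over an event measurable with respect to $Y_W$. The crucial point is that the set $W_1:=\{w\in W: s_w=1\}$ is forced open, and the remaining coordinates in $W_0:=W\setminus W_1$ are only constrained by an event in $\sigma(Y_{W_0})$ together with independent coin flips from $X_{W_0}$. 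So by averaging, $\Proba(Y_v=1\mid Z_W=\vec{s}_W)$ is a weighted average of terms of the form $\Proba(Y_v=1\mid Y_{W_1}=\vec{1},\, Y_{W_0}=\vec{t}_{W_0})$ over configurations $\vec{t}_{W_0}$, and each of these is at least $\Proba(Y_v=1\mid Y_{V\setminus\NeighboursIncluded{v}}=\cdots)\ge p_v$ — but that only gives $p_v q_v$, which is too weak. Instead I should use the $\DependencyClassWeak{}$-style conditioning together with the minimality of \NameShearersMeasure{} from Lemma~\ref{lem:shearerMinimality}\eqref{eq:shearerMinimalityOVOEP}: the right comparison is that, after integrating out the $W_0$ coordinates, what remains is a conditional probability of $Y_v=1$ given $Y_{W_1}=\vec{1}$ under a field still in $\DependencyClassWeak{G}(\vec{p})$, hence bounded below by $\ShearerMeasure{G}{\vec{p}}(Y_v=1\mid Y_{W_1}=\vec{1})=\ShearerOVOEP{W_1}{v}(\vec{p})$, and finally by Proposition~\ref{prop:shearerOVOEPDecreasing} this is at least $\ShearerOVOEP{W}{v}(\vec{p})$ since $W\supseteq W_1$.

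The main obstacle I anticipate is justifying rigorously the reduction from conditioning on $\{Z_W=\vec{s}_W\}$ to conditioning on $\{Y_{W_1}=\vec{1}\}$ alone: one must check that integrating out the $X_{W_0}$ coin flips and the unconstrained $Y_{W_0}$ coordinates genuinely yields a nonnegative combination of events of the form $\{Y_{W'}=\vec{1}\}$ for $W'\supseteq W_1$, so that the minimality bound \eqref{eq:shearerMinimalityOVOEP} applies with the monotonicity in Proposition~\ref{prop:shearerOVOEPDecreasing} absorbing the loss from $W_1\subseteq W$. Concretely, I expect one conditions on $Y_{W_0}$, notes that on the sub-event $\{Z_{W_0}=\vec{s}_{W_0}\}\cap\{Y_{W_1}=\vec{1}\}$ the conditional law of $(Y_v,Y_{W_1})$ is governed by a weak-dependency field, applies Lemma~\ref{lem:shearerMinimality} conditionally, and then averages; the bookkeeping that this average stays above $q_v\ShearerOVOEP{W}{v}(\vec{p})$ uniformly in $\vec{s}_W$ is the delicate part, but it is exactly the content that "blames adjacent $0$s on $X$" as announced in the surrounding text.
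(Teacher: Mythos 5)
Your first step is sound: since $X_v$ is independent of $(Y,X_W)$ and $Z_W$ is a function of $(Y_W,X_W)$, one indeed has $\Proba(Z_v=1\mid Z_W=\vec{s}_W)=q_v\,\Proba(Y_v=1\mid Z_W=\vec{s}_W)$, and the intermediate target $\Proba(Y_v=1\mid Z_W=\vec{s}_W)\ge\ShearerOVOEP{W}{v}(\vec{p})$ is exactly what the paper proves. The gap is in your route to that inequality. Your central claim --- that after integrating out the $W_0$-coordinates the conditional law of $Y$ given $\{Z_W=\vec{s}_W\}$ is ``still in $\DependencyClassWeak{G}(\vec{p})$'', so that Lemma \ref{lem:shearerMinimality} applies with conditioning set $W_1$ and Proposition \ref{prop:shearerOVOEPDecreasing} absorbs the loss --- is unjustified and false in general. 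Membership in $\DependencyClassWeak{G}(\vec{p})$ only controls conditioning on $Y$ outside $\NeighboursIncluded{u}$, whereas the event $\{Z_{W_0}=\vec{0}\}$ involves $Y_{W_0}$ and $W_0$ may meet $\Neighbours{v}$. Concretely, take $G$ a single edge $(v,w)$ with $p>1/2$ and $Y_v=Y_w$ one Bernoulli$(p)$ variable: this $Y$ lies in $\DependencyClassWeak{G}(\vec{p})$ (the weak condition is vacuous there), yet $\Proba(Y_v=1\mid Z_w=0)=p^2/(q+p^2)<p$, so the conditioned field is not in $\DependencyClassWeak{G}(\vec{p})$, and your per-configuration bounds fail outright since $\Proba(Y_v=1\mid Y_w=0)=0$. (Your aside that the per-term bound $p_v$ would give ``only $p_vq_v$, which is too weak'' is also backwards: by \eqref{eq:shearerOVOEPBoundsAbove} one has $p_v\ge\ShearerOVOEP{W}{v}(\vec{p})$, so $q_vp_v$ would be \emph{stronger} than needed --- the point is that the per-term bound is false, not weak.) What is missing is precisely the quantitative mechanism showing that the downward bias created by conditioning on the masked zeros is at most $1-\ShearerOVOEP{W}{v}(\vec{p})$; asserting this is asserting the proposition, so flagging it as ``the delicate part'' leaves the whole content unproved.

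The paper supplies that mechanism by induction on $\Cardinality{W}$. With $N_0:=\Set{w\in W\cap\Neighbours{v}:s_w=0}$, $N_1:=\Set{w\in W\cap\Neighbours{v}:s_w=1}$ and $M:=W\setminus\Neighbours{v}$, one bounds $\Proba(Y_v=0\mid Z_W=\vec{s}_W)$ from above: enlarge the numerator by dropping the constraints on $N_0\cup N_1$ and shrink the denominator by replacing $\{Z_{N_0}=\vec{0}\}$ with $\{X_{N_0}=\vec{0}\}$ (this is the step that blames the adjacent zeros on $X$); the weak-class property is invoked only for the conditioning on $Z_M$, which lies outside $\NeighboursIncluded{v}$; the factors $\Proba(Y_{w_i}=1\mid Y_{w_1}=\dots=Y_{w_{i-1}}=1,Z_M=\vec{s}_M)$ are handled by the induction hypothesis (conditioning on $Y_{w_j}=1$ or on $Z_{w_j}=1$ gives the same conditional law of the remaining variables, since $X_{w_j}$ is an independent coin); and the resulting product is compared with $q_v$ via \eqref{eq:LSSExtensionDisturbanceFundamentalInequality}, which follows from the fundamental identity \eqref{eq:shearerOVOEPFundamentalIdentity} together with \eqref{eq:shearerOVOEPBoundsAbove}. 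Without an induction of this kind, or some substitute use of the fundamental identity, your outline does not close; Lemma \ref{lem:shearerMinimality} alone cannot do the work, because it applies to the unconditioned field and to all-ones conditioning events only.
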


\begin{Rem}
This generalizes \cite[proposition 1.2]{LiggettSchonmannStacey_domination}, the core of \NameLiggettSchonmannStacey{}'s proof, in the following ways: we localize the parameters $\alpha$ and $r$ they used and assume no total ordering of the vertices yet. Furthermore $r_v=q_v$ follows from a conservative bound of the form
\begin{equation*}
	r_v
	:= 1 - \sup\Set{\ShearerOVOEP{W}{v}(\vec{p}): (W,v)\text{ escaping}}
	= 1-p_v = q_v\,,
\end{equation*}
where the $\sup$ is attained in $\ShearerOVOEP{\emptyset}{v}(\vec{p})=p_v$.
\end{Rem}

\begin{NoteToSelf}
Lopsided conditions are sufficient for the LLL but fail in the proof of proposition \ref{prop:dominationConditionalMinoration} in \eqref{eq:dominationConditionalMinorationProofClass}, as in general $\vec{s}_M\not=\vec{1}$.
\end{NoteToSelf}

\begin{proof}
Recall that $\vec{p}\in\PShearerSetInterior{G}$ implies that $\vec{p}\StrictlyGreater\vec{0}$. Whence $\vec{q}\StrictlyLess\vec{1}$ and \eqref{eq:dominationConditionalMinoration} is well defined because
\begin{equation*}
	\ForAll\text{ finite }W\subseteq V,\vec{s}_W\in\Configurations{W}:
	\quad\Proba(Z_W=\vec{s}_W)>0\,.
\end{equation*}

For every decomposition $N_0\uplus N_1:=\Neighbours{v}\cap W$ with $N_0=:\Set{u_1,\dotsc,u_l}$, $N_1=:\Set{w_1,\dotsc,w_m}$ and  $M:=W\setminus\Neighbours{v}$ the fundamental identity \eqref{eq:shearerOVOEPFundamentalIdentity} implies the inequality
\begin{equation}\label{eq:LSSExtensionDisturbanceFundamentalInequality}
	[1-\ShearerOVOEP{W}{v}(\vec{p})]
	\left(\prod_{j=1}^l p_{u_j}\right)
	\prod_{i=1}^m \ShearerOVOEP{M\uplus\Set{w_1,\dotsc,w_{i-1}}}{w_i}(\vec{p})
	\ge q_v\,,
\end{equation}
where $p_{u_j}\ge \ShearerOVOEP{M\uplus N_1\uplus\Set{u_1,\dotsc,u_{j-1}}}{u_j}(\vec{p})$ follows from \eqref{eq:shearerOVOEPBoundsAbove}.\\

We prove \eqref{eq:dominationConditionalMinoration} inductively over the cardinality of $W$. The induction base $W=\emptyset$ is easy as $\Proba(Z_v=1)=q_v\Proba(Y_v=1)\ge q_v p_v = q_v\ShearerOVOEP{\emptyset}{v}(\vec{p})$.
For the induction step fix $\vec{s}_W\in\Configurations{W}$ and the decomposition
\begin{equation*}
	N_0:=\Set{w\in W\cap\Neighbours{v}: s_w=0}=:\Set{u_1,\dotsc,u_l}
\end{equation*}
and
\begin{equation*}
	N_1:=\Set{w\in W\cap\Neighbours{v}: s_w=1}=:\Set{w_1,\dotsc,w_m}\,.
\end{equation*}
We write
\begin{subequations}\label{eq:dominationConditionalMinorationProof}
\begin{align}
	&\FirstAlign\Proba(Y_v=0|Z_W=\vec{s}_W)
		\notag\\
	&=\Proba(Y_v=0|Z_{N_0}=\vec{0},Z_{N_1}=\vec{1},Z_M=\vec{s}_M)
		\notag\\
	&=\frac%
		{\Proba(Y_v=0,Z_{N_0}=\vec{0},Z_{N_1}=\vec{1},Z_M=\vec{s}_M)}
		{\Proba(Z_{N_0}=\vec{0},Z_{N_1}=\vec{1},Z_M=\vec{s}_M)}
		\notag\\
	&\le\frac%
		{\Proba(Y_v=0,Z_M=\vec{s}_M)}
		{\Proba(X_{N_0}=\vec{0},Y_{N_1}=\vec{1},Z_M=\vec{s}_M)}
		\label{eq:dominationConditionalMinorationProofZDefinition}\\
	&=\frac%
		{\Proba(Y_v=0|Z_M=\vec{s}_M)\Proba(Z_M=\vec{s}_M)}
		{\Proba(X_{N_0}=\vec{0})\,\Proba(Y_{N_1}=\vec{1},Z_M=\vec{s}_M)}
		\label{eq:dominationConditionalMinorationProofYIndependence}\\
	&\le\frac%
		{q_v}
		{\Proba(X_{N_0}=\vec{0})\,\Proba(Y_{N_1}=\vec{1}|Z_M=\vec{s}_M)}
		\label{eq:dominationConditionalMinorationProofClass}\\
	&=\frac%
		{q_v}
		{\prod_{j=1}^l (1-q_{u_j}) \prod_{i=1}^m
		\Proba(Y_{w_i}=1|Y_{w_1}=\ldots=Y_{w_{i-1}}=1,Z_M=\vec{s}_M)}
		\notag\\
	&\le\frac%
		{q_v}
		{\prod_{j=1}^l p_{u_j} \prod_{i=1}^m
		\ShearerOVOEP{M\uplus\Set{w_1,\dotsc,w_{i-1}}}{w_i}(\vec{p})}
		\label{eq:dominationConditionalMinorationProofInduction}\\
	&\le 1-\ShearerOVOEP{W}{v}(\vec{p})\,.
		\label{eq:dominationConditionalMinorationProofInequality}
\end{align}
\end{subequations}
The key steps in \eqref{eq:dominationConditionalMinorationProof} are:
\begin{description}
	\item[\eqref{eq:dominationConditionalMinorationProofZDefinition}] increasing the numerator by dropping $Z_{N_0}=\vec{0}$ and $Z_{N_1}=\vec{1}$ while decreasing the denominator by using the definition of $Z$,
	\item[\eqref{eq:dominationConditionalMinorationProofClass}] as $\GraphMetric{v}{M}\ge 1$ and $Y\in\DependencyClassWeak{G}(\vec{p})$,
	\item[\eqref{eq:dominationConditionalMinorationProofYIndependence}] using the independence of $X_{N_0}$ from $(Y_{N_1},Z_M)$,
	\item[\eqref{eq:dominationConditionalMinorationProofInduction}] applying the induction hypothesis \eqref{eq:dominationConditionalMinoration} to the factors of the rhs product in the denominator, which have strictly smaller cardinality,
	\item[\eqref{eq:dominationConditionalMinorationProofInequality}] applying inequality \eqref{eq:LSSExtensionDisturbanceFundamentalInequality}.
\end{description}
Hence
\begin{equation*}
  \Proba(Z_v=1|Z_W=\vec{s}_W)
  \ge q_v \Proba(Y_v=1|Z_W=\vec{s}_W)
  \ge q_v \ShearerOVOEP{W}{v}(\vec{p})\,.
\end{equation*}
\end{proof}
\section{The weak invariant case}
\label{sec:weakInvariantCase}
In this section we extend our characterization to the case of BRFs with weak dependency graph, which are invariant under a group action. Let $\Gamma$ be a subgroup of $\AutomorphismGroup{G}$. A BRF $Y$ is \emph{$\Gamma$-invariant} iff
\begin{equation}\label{eq:gammaInvariantBRF}
	\ForAll \gamma\in\Gamma:\quad
	(\gamma Y):=(Y_{\gamma(v)})_{v\in V}\text{ has the same law as }Y\,.
\end{equation}
For a given $\Gamma$ and $\Gamma$-invariant $\vec{p}$ we denote by $\DependencyClassWeakInvariant{\Gamma}(\vec{p})$ the \emph{weak, $\Gamma$-invariant dependency class}, that is $\Gamma$-invariant BRFs with weak dependency graph $G$, and by $\DependencyClassStrongInvariant{\Gamma}(\vec{p})$ the corresponding strong version.\\

We call a pair $(G,\Gamma)$ \emph{partition exhaustive} iff there exists a sequence of partitions $(P_n)_{n\in\NatNum}$ of $V$ with $P_n:=(V_i^{(n)})_{i\in\NatNum}$, such that
\begin{subequations}\label{eq:partitionExhaustive}
\begin{gather}
	\label{eq:partitionExhaustiveRegularity}
	\ForAll n,i,j\in\NatNum:\quad
	\Graph{V_i^{(n)}}\text{ is isomorph to }\Graph{V_1^{(n)}}=:G_n\,,\\
	\label{eq:partitionExhaustiveOrbit}
	\ForAll n\in\NatNum:\quad
	\text{the orbit of $P_n$ under $\Gamma$ is finite,}\\
	\label{eq:partitionExhaustiveExhaustion}
	V_1^{(n)}\xrightarrow[n\to\infty]{}V,\text{ that is $(G_n)_{n\in\NatNum}$ exhausts $G$.}
\end{gather}
\end{subequations}
The kind of graphs we have in mind are regular infinite trees and tree-like graphs, $\IntNum^d$ and other regular lattices (triangular, hexagonal, \ldots). We think of the group $\Gamma$ to be generated by some of the natural shifts and rotations of the graph. An example are increasing regular rectangular decompositions of $\IntNum^d$ together with translations of $\IntNum^d$.

\begin{Thm}\label{thm:weakInvariantDominationShearerEquivalence}
Let $(G,\Gamma)$ be partition exhaustive. Then
\begin{equation}\label{eq:weakInvariantDominationShearerEquivalence}
	\PUniformDominationSet{\DependencyClassWeakInvariant{\Gamma}}
	=\PDominationSet{\DependencyClassWeakInvariant{\Gamma}}
	=\PShearerSetInteriorInvariant{\Gamma}
	:=\Set{\vec{p}\in\PShearerSetInterior{G}:
		\quad\vec{p}\text{ is $\Gamma$-invariant}
	}\,.
\end{equation}
\end{Thm}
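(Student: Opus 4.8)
The plan is to close the cyclic chain of inclusions
\[
	\PUniformDominationSet{\DependencyClassWeakInvariant{\Gamma}}
	\subseteq\PDominationSet{\DependencyClassWeakInvariant{\Gamma}}
	\subseteq\PShearerSetInteriorInvariant{\Gamma}
	\subseteq\PUniformDominationSet{\DependencyClassWeakInvariant{\Gamma}}\,.
\]
The first inclusion is the $\Gamma$-invariant analogue of \eqref{eq:dominatedParametersInclusion}. The third follows from theorem \ref{thm:shearerDominationEquivalenceInhomogeneous}: a $\Gamma$-invariant $\vec p\in\PShearerSetInterior{G}=\PUniformDominationSet{\DependencyClassWeak{G}}$ admits a non-trivial common dominated vector $\vec c\in\DominatedVectors{\DependencyClassWeak{G}(\vec p)}$, and since $\DependencyClassWeakInvariant{\Gamma}(\vec p)\subseteq\DependencyClassWeak{G}(\vec p)$ the same $\vec c$ witnesses $\vec p\in\PUniformDominationSet{\DependencyClassWeakInvariant{\Gamma}}$ (one may even take $\vec c$ itself $\Gamma$-invariant: the vector of theorem \ref{thm:uniformlyDominatedVector} is, because its defining data $\ShearerCriticalFunction{G_v}(\vec p)$, $\Cardinality{V_v}$ and $\min\Set{q_w:w\in\Neighbours{v}\cap V_v}$ are automorphism-covariant). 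So the entire content lies in the middle inclusion, which I would prove by contraposition, in the spirit of proposition \ref{prop:nondomination}.

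So fix a $\Gamma$-invariant $\vec p\notin\PShearerSetInterior{G}$; I want a \emph{$\Gamma$-invariant} $Y\in\DependencyClassWeak{G}(\vec p)$ together with a finite $W\subseteq V$ such that $\Proba(Y_W=\vec{1})=0$, for then, exactly as in proposition \ref{prop:nondomination}, every dominated BPF $\BernoulliProductField{\vec x}{V}$ satisfies $\prod_{v\in W}x_v\le 0$, so $\vec x\not\StrictlyGreater\vec{0}$ and $\vec p\notin\PDominationSet{\DependencyClassWeakInvariant{\Gamma}}$. To build $Y$, first pick a finite subgraph $H$ of $G$ with $\ShearerCriticalFunction{H}(\vec p)\le 0$ (possible by \eqref{eq:shearerParametersInfinite}--\eqref{eq:shearerParametersInterior}), so that $\vec p_{V(H)}\notin\PShearerSetInterior{\Graph{V(H)}}$, and use the exhaustion \eqref{eq:partitionExhaustiveExhaustion} to fix $n$ with $V(H)\subseteq V_1^{(n)}$; then $\vec p_{V_1^{(n)}}\notin\PShearerSetInterior{G_n}$, the blocks of $P_n$ being finite. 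Let $\Gamma':=\Set{\gamma\in\Gamma:\gamma P_n=P_n}$, of finite index $k$ in $\Gamma$ by \eqref{eq:partitionExhaustiveOrbit}, and let $\mathcal B$ be the $\Gamma'$-orbit of the block $V_1^{(n)}$. As $\vec p$ is $\Gamma$-invariant, each $\gamma'\in\Gamma'$ restricts to a graph isomorphism $\Graph{V_1^{(n)}}\to\Graph{\gamma'V_1^{(n)}}$ carrying $\vec p_{V_1^{(n)}}$ onto $\vec p_{\gamma'V_1^{(n)}}$, whence $\vec p_B\notin\PShearerSetInterior{\Graph{B}}$ for \emph{every} $B\in\mathcal B$. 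Lemma \ref{lem:zeroProbabilityBelow} then provides, for each such $B$, a field $Z_B\in\DependencyClassStrong{\Graph{B}}(\vec p_B)$ with $\Proba((Z_B)_B=\vec{1})=0$; I would choose them compatibly along the $\Gamma'$-action (possible since the field of lemma \ref{lem:zeroProbabilityBelow} is a vertex-wise minimum of Shearer's measure and a product field, both covariant under parameter-preserving automorphisms of $\Graph{B}$ by the characterisation \eqref{eq:shearerCharacterizationProbability}; equivalently, pre-symmetrise $Z_{V_1^{(n)}}$ over the \emph{finite} action of the $\Gamma'$-stabiliser of $V_1^{(n)}$ on $\Configurations{V_1^{(n)}}$). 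Finally let $\hat Y^{(n)}$ be the field on $G$ obtained by placing, independently over the blocks of $P_n$, the field $Z_B$ on each $B\in\mathcal B$ and $\BernoulliProductField{\vec p_B}{B}$ on each $B\notin\mathcal B$.

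Three verifications then finish the proof. \emph{(i)} $\hat Y^{(n)}\in\DependencyClassWeak{G}(\vec p)$: for $v$ in a product block the conditional probability $\Proba(\hat Y^{(n)}_v=1\mid\hat Y^{(n)}_{V\setminus\NeighboursIncluded{v}})$ is exactly $p_v$, while for $v$ in a block $B\in\mathcal B$ it equals $\Proba((Z_B)_v=1\mid(Z_B)_{B\setminus\NeighboursIncluded{v}})\ge p_v$, using that the other blocks are independent, that $B\setminus\NeighboursIncluded{v}$ is the complement of $v$'s closed neighbourhood in $\Graph{B}$, and that $\DependencyClassStrong{\Graph{B}}\subseteq\DependencyClassWeak{\Graph{B}}$. \emph{(ii)} $\hat Y^{(n)}$ is $\Gamma'$-invariant (each $\gamma'\in\Gamma'$ permutes the blocks of $P_n$, fixes $\mathcal B$ setwise, and matches the $Z_B$ up by the compatible choice), so $\Law(\gamma\hat Y^{(n)})$ depends only on the coset of $\gamma$; hence the field $Y$ with law $\tfrac1k\sum_{i=1}^k\Law(\gamma_i\hat Y^{(n)})$, summed over a transversal $\gamma_1,\dots,\gamma_k$ of $\Gamma'$ in $\Gamma$, is well defined and $\Gamma$-invariant (translation permutes the $k$ cosets), and — as a mixture of automorphism-images of $\hat Y^{(n)}$ with $\vec p$ itself $\Gamma$-invariant — still lies in $\DependencyClassWeak{G}(\vec p)$ (the weak-dependency bound is preserved both under graph automorphisms that fix $\vec p$ and under mixing, by conditioning on the mixing index), so $Y\in\DependencyClassWeakInvariant{\Gamma}(\vec p)$. \emph{(iii)} With $W:=\bigcup_{i=1}^k\gamma_i^{-1}(V_1^{(n)})$, a finite set, one has $\Proba(Y_W=\vec{1})\le\tfrac1k\sum_{i=1}^k\Proba\bigl((\gamma_i\hat Y^{(n)})_{\gamma_i^{-1}(V_1^{(n)})}=\vec{1}\bigr)=\tfrac1k\sum_{i=1}^k\Proba(\hat Y^{(n)}_{V_1^{(n)}}=\vec{1})=0$, as required.

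The genuine obstacle is point \emph{(ii)}: reconciling $\Gamma$-invariance of the counterexample with the requirement that a finite ``all-ones'' event be forbidden. A ``single bad block'' field achieves the latter but not the former, and a blunt symmetrisation over $\Gamma$ can fail -- if $\Gamma'=\operatorname{Stab}_\Gamma(P_n)$ moves the (countably many) blocks of $P_n$ in an infinite orbit, the orbit of the field's law under $\Gamma$ is infinite and has no normalisable average. The point of the hypothesis that the orbit of $P_n$ is \emph{finite} is exactly that it makes the two-stage cure work: fill \emph{all} blocks of the possibly-infinite $\Gamma'$-orbit $\mathcal B$ with bad fields (free, because $\Gamma$-invariance of $\vec p$ makes those blocks carry isomorphic Shearer data), gaining $\Gamma'$-invariance; then average only over the \emph{finite} quotient of $\Gamma$ by $\Gamma'$. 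The remaining bookkeeping -- covariance of Shearer's measure under parameter-preserving automorphisms, and stability of $\DependencyClassWeak{G}(\vec p)$ under mixtures and under $\Gamma$-automorphisms when $\vec p$ is $\Gamma$-invariant -- is routine but should be spelled out.
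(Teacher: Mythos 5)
Your proof is correct and follows essentially the same route as the paper: the inclusions $\PShearerSetInteriorInvariant{\Gamma}\subseteq\PUniformDominationSet{\DependencyClassWeakInvariant{\Gamma}}\subseteq\PDominationSet{\DependencyClassWeakInvariant{\Gamma}}$ come from theorem \ref{thm:shearerDominationEquivalenceInhomogeneous} together with the subclass relation, and the reverse inclusion is proved by planting the zero-probability fields of lemma \ref{lem:zeroProbabilityBelow} on blocks of a partition $P_n$ witnessing $\vec{p}\notin\PShearerSetInterior{G_n}$ and restoring $\Gamma$-invariance by a finite mixture, exactly the mechanism the finite orbit of $P_n$ is there for. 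The only (harmless) variation is that the paper fills \emph{all} blocks of each partition in the orbit with independent bad fields and mixes via a uniform index over the orbit, while you fill only the $\Gamma'$-orbit of $V_1^{(n)}$ and average over the cosets of the stabiliser; your explicit verification of the automorphism-covariance of lemma \ref{lem:zeroProbabilityBelow}'s construction and of the finite set $W$ with $\Proba(Y_W=\vec{1})=0$ merely spells out steps the paper leaves implicit.
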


\begin{Rem}
It follows from \eqref{eq:partitionExhaustive} that $\Gamma$ acts quasi-transitively on $G$. Hence $\PShearerSetInteriorInvariant{\Gamma}$ can be seen as a subset of a finite-dimensional space.\\

The mixing in \eqref{eq:nondominatingInvariantWeakMixing} destroys strong independence even in simple cases like $G=\IntNum$ and $\Gamma$ the group of translations of $\IntNum$ \cite[end of section $2$]{LiggettSchonmannStacey_domination}. The easiest way to see this is to let $G:=(\Set{v,w},\emptyset)$, $X^{(1)},X^{(2)}\in\DependencyClassStrong{G}(\vec{p})$ and $Y$ be Bernoulli($\frac{1}{2}$)-distributed, all independent of each other. Define $Z:=X^{(Y)}$ and ask if $\Proba(Z_v=Z_w=1)=\Proba(Z_v=1)\Proba(Z_w=1)$. This fails for most choices of $\vec{p}$. Calculations on slightly more complex graphs as $G=(\Set{u,v,w},\Set{(u,v)})$ show, that $Z$ from \eqref{eq:nondominatingInvariantWeakMixing} has no strong dependency graph. Thus the present approach, inspired by \cite[page 89]{LiggettSchonmannStacey_domination}, does not allow to characterize $\PUniformDominationSet{\DependencyClassStrongInvariant{\Gamma}}$ and $\PDominationSet{\DependencyClassStrongInvariant{\Gamma}}$.
\end{Rem}

\begin{proof}
As $\DependencyClassWeakInvariant{\Gamma}(\vec{p})$ is a subclass of $\DependencyClassWeak{G}(\vec{p})$ theorem \eqref{thm:shearerDominationEquivalenceInhomogeneous} implies, that $\PShearerSetInteriorInvariant{\Gamma}\subseteq\PUniformDominationSet{\DependencyClassWeakInvariant{\Gamma}}\subseteq\PDominationSet{\DependencyClassWeakInvariant{\Gamma}}$. We show $\PDominationSet{\DependencyClassWeakInvariant{\Gamma}}\subseteq\PShearerSetInteriorInvariant{\Gamma}$ by constructing a counterexample. If $\vec{p}\not\in\PShearerSetInteriorInvariant{\Gamma}$, then by \eqref{eq:partitionExhaustiveExhaustion} there exists a $n\in\NatNum$, such that $\vec{p}\not\in\PShearerSetInterior{G_n,\Gamma}$ (the intersection of the projections of $\Gamma$-invariant parameters on $G$ with $\PShearerSetInterior{G_n}$). Let $P:=P_n$ and let $(P^{(1)},\dotsc,P^{(k)})$ be its finite orbit under the action of $\Gamma$ \eqref{eq:partitionExhaustiveOrbit}. By \eqref{eq:partitionExhaustiveRegularity} each class $V_{(i,j)}\in P^{(j)}$ has a graph $\Graph{V_{(i,j)}}$ isomorph to $G_n$. Use lemma \ref{lem:zeroProbabilityBelow} to construct \Iid{} BPFs $Z^{(i,j)}\in\DependencyClassStrong{G_n}(\vec{p})$ with $\Proba(Z^{(i,j)}=\vec{1})=0$. For $j\in\IntegerSet{k}$, collate the $Z_{(i,j)}$ to a BPF $Z^{(j)}$. This works, as $P^{(j)}$ is a partition of $G$. By definition $Z^{(j)}\in\DependencyClassStrongInvariant{\Gamma}(\vec{p})$. Finally let $U$ be Uniform($\IntegerSet{k}$)-distributed and independent of everything else. Define the final BPF $Z$ by
\begin{equation}\label{eq:nondominatingInvariantWeakMixing}
	Z := \sum_{j=1}^k \Iverson{U=j} Z^{(j)}\,.
\end{equation}
We claim that $Z\in\DependencyClassWeakInvariant{\Gamma}(\vec{p})$. The mixing in \eqref{eq:nondominatingInvariantWeakMixing} keeps $Z\in\DependencyClassWeak{G}(\vec{p})$. To see its $\Gamma$-invariance, let $\gamma\in\Gamma$. The automorphism $\gamma$ acts injectively on $(P^{(1)},\dotsc,P^{(k)})$ and thus also on $\IntegerSet{k}$. Therefore, using the fact that $U$ is uniform and everything is constructed independently, we have
\begin{equation*}
	\gamma Z
	= \sum_{j=1}^k \Iverson{U=j} \gamma Z^{(j)}
	= \sum_{j=1}^k \Iverson{U=\gamma^{-1} j} Z^{(j)}
	= \sum_{j=1}^k \Iverson{U=j} Z^{(j)}
	= Z\,.
\end{equation*}
\end{proof}
\section{The asymptotic size of the jump on \texorpdfstring{$\FuzzKZ$}{the k-fuzz of the integers}}
\label{sec:asymptoticJumpSizeKFuzzZ}
\NameLiggettSchonmannStacey{} formulated the following conjecture about the size of the jump at the critical value on $\FuzzKZ$, the $k$-fuzz of $\IntNum$:

\begin{Conj}[{\cite[after corollary 2.2]{LiggettSchonmannStacey_domination}}]
\label{conj:LSSConjecture}
\begin{equation}\label{eq:LSSConjecture}
	\ForAll k\in\NatNumZero:\quad
	\DominatedValue{
		\DependencyClassWeak{\FuzzKZ}
		(\PUniformDomination{\DependencyClassWeak{\FuzzKZ}})
	}
	=\frac{k}{k+1}\,.
\end{equation}
\end{Conj}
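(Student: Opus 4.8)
In fact the conjecture is false for every $k\ge2$, and the plan is to disprove it. First pin down the critical marginal: by theorem~\ref{thm:shearerDominationEquivalenceInhomogeneous}, $\PUniformDomination{\DependencyClassWeak{\FuzzKZ}}=\PShearer{\FuzzKZ}$, and $\PShearer{\FuzzKZ}=1-\PowerFracDual{k}{(k+1)}$ as recorded after theorem~\ref{thm:LLLHomogeneousEscaping}; put $q:=1-\PShearer{\FuzzKZ}=\PowerFracDual{k}{(k+1)}=\frac{k^k}{(k+1)^{k+1}}$ and $\vec p:=\PShearer{\FuzzKZ}\vec1$. The conjectured value is in fact $\DominatedValue{\ShearerMeasure{\FuzzKZ}{\vec p}}$: testing the monotone indicator $\Indicator{\Set{Y_{\IntegerSet N}=\vec1}}$ against $\ShearerCriticalFunction{\Graph{\IntegerSet N}}(\vec p)=\prod_{i=1}^N\ShearerOVOEP{\IntegerSet{i-1}}{i}(\vec p)$, and using proposition~\ref{prop:russoInhomogeneousExtension} on windows together with proposition~\ref{prop:shearerOVOEPDecreasing} for the reverse bound, gives $\DominatedValue{\ShearerMeasure{\FuzzKZ}{\vec p}}=\lim_N\ShearerCriticalFunction{\Graph{\IntegerSet N}}(\vec p)^{1/N}$; the deletion recursion $\ShearerCriticalFunction{\Graph{\IntegerSet N}}=\ShearerCriticalFunction{\Graph{\IntegerSet{N-1}}}-q\,\ShearerCriticalFunction{\Graph{\IntegerSet{N-k-1}}}$ (a case of \eqref{eq:shearerCriticalFunctionFundamentalIdentity}) has characteristic polynomial $\lambda^{k+1}-\lambda^k+q$ with derivative $\lambda^{k-1}\bigl((k+1)\lambda-k\bigr)$, so it acquires a dominant double root at $\lambda=\frac k{k+1}$ precisely when $q=\PowerFracDual{k}{(k+1)}$, whence $\DominatedValue{\ShearerMeasure{\FuzzKZ}{\vec p}}=\frac k{k+1}$. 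So \eqref{eq:LSSConjecture} is exactly the claim that $\DominatedVectors{\ShearerMeasure{\FuzzKZ}{\vec p}}$ is $\subseteq$-minimal in $\DependencyClassWeak{\FuzzKZ}(\vec p)$, and this is what I would refute.

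The counterexample is one explicit field. Let $(W_m)_{m\in\IntNum}$ be \Iid{} Bernoulli$(\PShearer{\FuzzKZ})$ and set $Y_v:=W_{\Ceiling{v/(k+1)}}$, i.e.\ repeat each $W_m$ over a block of $k+1$ consecutive integers. Since $\FuzzKZ$-distance exceeding $1$ means ordinary distance exceeding $k$, any two subsets of $\IntNum$ at $\FuzzKZ$-distance exceeding $1$ occupy disjoint families of blocks, so $Y_{W_1}$ and $Y_{W_2}$ are independent and $\FuzzKZ$ is a strong dependency graph of $Y$; as $\Proba(Y_v=1)=\PShearer{\FuzzKZ}$, this gives $Y\in\DependencyClassStrong{\FuzzKZ}(\vec p)\subseteq\DependencyClassWeak{\FuzzKZ}(\vec p)$ via \eqref{eq:strongIsWeak}. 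Now test $Y$ against the monotone function $f:=\Indicator{\Set{Y_S\ne\vec0}}$ with $S$ a single block: any $X\DistributedAs\BernoulliProductField{c}{\IntNum}$ with $Y\Stochastically{\ge}X$ satisfies $1-(1-c)^{k+1}=\Expect[f(X)]\le\Expect[f(Y)]=\Proba(Y_S\ne\vec0)=1-q$, i.e.\ $c\le1-q^{1/(k+1)}=1-\frac{k^{k/(k+1)}}{k+1}$. For $k\ge2$ one has $k^{k/(k+1)}>1$, so $1-q^{1/(k+1)}<1-\frac1{k+1}=\frac k{k+1}$; hence $\DominatedValue{\DependencyClassWeak{\FuzzKZ}(\PUniformDomination{\DependencyClassWeak{\FuzzKZ}})}\le1-q^{1/(k+1)}<\frac k{k+1}$ for every $k\ge2$, which contradicts \eqref{eq:LSSConjecture}. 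Moreover $1-q^{1/(k+1)}=\frac{1+\log k}{k+1}\bigl(1+o(1)\bigr)\to0$, so the true jump is negligible beside the conjectured $\frac k{k+1}\to1$.

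There is no serious obstacle once the right test is identified: for domination of $\BernoulliProductField{k/(k+1)}{\IntNum}$ the binding monotone observable is not an all-ones window --- on those Shearer's measure genuinely is extremal, by the recursion above --- but an all-zeros window, and $q=\frac{k^k}{(k+1)^{k+1}}$ overtakes $\bigl(\tfrac1{k+1}\bigr)^{k+1}$ exactly when $k\ge2$. The points needing care are the routine verification that $\FuzzKZ$ is a (strong, hence weak) dependency graph of $Y$ and that block length $k+1$ is optimal --- a vertex at the end of a longer block would see a same-block neighbour outside $\NeighboursIncluded{\cdot}$, breaking the weak condition. The genuinely open remainder, if one wants the exact value of $\DominatedValue{\DependencyClassWeak{\FuzzKZ}(\vec p)}$ rather than just a disproof, is to close the gap between the lower bound $q^2$ from theorem~\ref{thm:uniformlyDominatedVector} and the upper bound $1-q^{1/(k+1)}$ obtained here, presumably by nesting the block construction inside a sparser Shearer measure.
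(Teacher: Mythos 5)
Your refutation is correct and is essentially the paper's own argument (section \ref{sec:asymptoticJumpSizeKFuzzZ}, proposition \ref{prop:KFuzzZAsympotics}): a block of $k+1$ consecutive vertices carrying one shared Bernoulli$(\PShearer{\FuzzKZ})$ value lies in $\DependencyClassStrong{\FuzzKZ}(\PShearer{\FuzzKZ})$, and the all-zeros event on that block forces any dominated product parameter to satisfy $c\le 1-q^{1/(k+1)}=1-\frac{k^{k/(k+1)}}{k+1}$, which is exactly the bound appearing in the paper's proof before it passes to asymptotics. The differences are cosmetic: the paper uses a single correlated block with an independent BPF outside (and a mixing step for the translation-invariant class) and phrases the conclusion asymptotically in $k$, whereas you tile $\IntNum$ with \Iid{} blocks and note that the same bound already falls below $\frac{k}{k+1}$ for every $k\ge2$; your opening computation of $\DominatedValue{\ShearerMeasure{\FuzzKZ}{\vec p}}=\frac{k}{k+1}$ is side context (the paper cites it) and is not load-bearing for the refutation.
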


We think that \NameLiggettSchonmannStacey{} were led by the intuition, that the extra randomness used in obtaining the above lower bound (see the $Y$ in \cite[proposition 1.2]{LiggettSchonmannStacey_domination} or the $X$ in the proof of proposition \ref{prop:dominationConditionalMinoration}) can be ignored in a suitable transitive setting. This would, in general, yield $\DominatedValue{\ShearerMeasure{G}{\PShearer{G}}}=\DominatedValue{\DependencyClassWeak{G}(\PUniformDomination{\DependencyClassWeak{G}})}$, and, in the particular case of $\FuzzKZ$, $\DominatedValue{\ShearerMeasure{\FuzzKZ}{\PShearer{\FuzzKZ}}}=\frac{k}{k+1}$ \cite[section 4.2]{MathieuTemmel_kindependent}, with $\PShearer{\FuzzKZ}=1-\PowerFracDual{k}{(k+1)}$ \cite[section 4.2]{MathieuTemmel_kindependent}.\\

Proposition \ref{prop:KFuzzZAsympotics} shows, that asymptotically $\DominatedValue{\DependencyClassWeak{\FuzzKZ}(\PShearer{\FuzzKZ})}$ is much closer to the lower bound of $\frac{k}{(k+1)^2}$ from \cite[corollary 2.5]{LiggettSchonmannStacey_domination}. This is caused by the increasing range of dependence, as $k\to\infty$, which allows for extreme correlations on the same order as the extra randomness used to decorrelate them.

\begin{Prop}\label{prop:KFuzzZAsympotics}
For $k\in\NatNum$ and $\Gamma_k$ the translations of $\IntNum$, let $C_k$ be either $\DependencyClassStrong{\FuzzKZ}$ or $\DependencyClassWeakInvariant{\Gamma_k}$. We have
\begin{equation}\label{eq:KFuzzZAsympotics}
	\ForAll\varepsilon>0:\Exists K(\varepsilon): \ForAll k\ge K:\quad
	\DominatedValue{C_k(\PShearer{\FuzzKZ})}
	\le\frac{1+(1+\varepsilon)\ln(k+1)}{k+1}\,.
\end{equation}
\end{Prop}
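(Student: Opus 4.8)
The plan is to exhibit, for each large $k$, an explicit BRF $Z$ in the class $C_k$ at marginal parameter $p=\PShearer{\FuzzKZ}=1-\PowerFracDual{k}{(k+1)}$ whose dominated vectors are bounded above by roughly $\frac{\ln(k+1)}{k+1}$, so that $\DominatedValue{C_k(p)}$ cannot exceed this. Since $\PShearer{\FuzzKZ}\in\PShearerSetBoundary{\FuzzKZ}$ lies exactly on the Shearer boundary, the natural candidate is \NameShearersMeasure{} itself, $\ShearerMeasure{\FuzzKZ}{p}$, which belongs to $\DependencyClassStrong{\FuzzKZ}(p)$ and (by taking an appropriate mixture over a finite orbit of translates, exactly as in the proof of theorem \ref{thm:weakInvariantDominationShearerEquivalence}) also yields a member of $\DependencyClassWeakInvariant{\Gamma_k}(p)$ with the same one-dimensional marginals. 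So the heart of the matter reduces to an upper bound on $\DominatedValue{\ShearerMeasure{\FuzzKZ}{p}}$ at the critical parameter, or rather on the homogeneous dominated value for the shifted, mixed field; the mixing step only needs the observation that mixing preserves membership in the weak class and does not increase the set of dominated product vectors (since $\DominatedVectors{\cdot}$ is an intersection over the mixands of closed down-sets, and a nontrivial common dominated vector for the mixture is dominated by each mixand).

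First I would set up a finite-volume computation. \NameShearersMeasure{} on $\FuzzKZ$ at the critical $p$ has the explicit description as a $(k{+}1)$-factor (see \cite[section 4.2]{MathieuTemmel_kindependent}): the configuration of $1$s and $0$s is governed by a renewal-type structure where between consecutive $0$s there are at least $k$ $1$s and the ``gaps'' have an explicit geometric-like law. To bound $\DominatedValue{\ShearerMeasure{\FuzzKZ}{p}}$ I would use the standard obstruction to domination: if $\ShearerMeasure{\FuzzKZ}{p}\Stochastically{\ge}\BernoulliProductField{c}{\IntNum}$ then for every finite window $W=\IntegerSet{m}$ and every up-event $A$ we need $\ShearerMeasure{}(Y_W\in A)\ge\BernoulliProductField{c}{W}(X_W\in A)$. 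Taking $A=\Set{Y_W\not=\vec 0}$, i.e.\ $f=1-\Indicator{\Set{\vec 0}}$ as in the paragraph after theorem \ref{thm:uniformlyDominatedVector}, gives
\begin{equation*}
	(1-c)^m\ge\ShearerMeasure{\FuzzKZ}{p}(Y_{\IntegerSet{m}}=\vec 0)
	=\ShearerCriticalFunction{\Graph{\IntegerSet{m}}}(p)\,,
\end{equation*}
so $c\le 1-\ShearerCriticalFunction{\Graph{\IntegerSet{m}}}(p)^{1/m}$, and optimizing over $m$ (indeed letting $m\to\infty$) yields $c\le 1-\rho$, where $\rho:=\lim_m\ShearerCriticalFunction{\Graph{\IntegerSet{m}}}(p)^{1/m}$ is the exponential decay rate of the critical function of the interval graph at the critical parameter. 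The whole proposition then comes down to the asymptotic estimate, as $k\to\infty$,
\begin{equation*}
	1-\rho_k\le\frac{1+(1+\varepsilon)\ln(k+1)}{k+1}\,,
	\qquad\text{equivalently}\qquad
	\rho_k\ge 1-\frac{1+(1+\varepsilon)\ln(k+1)}{k+1}\,.
\end{equation*}

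The computation of $\rho_k$ is a concrete transfer-matrix / linear-recursion problem: $\ShearerCriticalFunction{\Graph{\IntegerSet{m}}}(p)$ satisfies the deletion–contraction recursion \eqref{eq:shearerCriticalFunctionFundamentalIdentity}, which on the path $\FuzzKZ$-restricted-to-$\IntegerSet{m}$ becomes a linear recurrence of order $k+1$ with characteristic polynomial $z^{k+1}-z^k+q=0$ where $q=\PowerFracDual{k}{(k+1)}=\frac{k^k}{(k+1)^{k+1}}$; the relevant $\rho_k$ is the dominant root of this polynomial (at the critical parameter this root is real and simple, and equals the value $1-\ShearerOVOEP{\cdot}{\cdot}$ fixed point of the stationary recursion). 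So I would analyze the dominant root of $z^{k+1}-z^k+\frac{k^k}{(k+1)^{k+1}}=0$ near $z=\frac{k}{k+1}$; writing $z=\frac{k}{k+1}(1+\delta)$ and expanding, the leading balance gives $\delta$ of order $\frac{\ln k}{k}$ — more precisely $k\delta\approx\ln k$ up to lower-order corrections — which after translating back through $1-\rho_k$ produces exactly the bound $\frac{1+(1+\varepsilon)\ln(k+1)}{k+1}$ for all $k$ beyond some $K(\varepsilon)$. The main obstacle is precisely this asymptotic root analysis: one must control the dominant root of a $(k{+}1)$-degree polynomial uniformly in $k$ with a $\frac{\ln k}{k}$-accurate two-term expansion, and justify that the subdominant roots do not interfere (so that $\rho_k$ is genuinely the decay rate, not merely a bound) — this is the kind of estimate where naively dropping the $\ln k$ term, as in \cite[section 2]{LiggettSchonmannStacey_domination}, is exactly what one must avoid, since the whole point of the proposition is that $\DominatedValue{C_k(\PShearer{\FuzzKZ})}$ decays like $\frac{\ln k}{k}$ and not like $\frac{1}{k+1}$ as Conjecture \ref{conj:LSSConjecture} would suggest.
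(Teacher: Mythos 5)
Your argument breaks at the very first quantitative step, and the witness you chose cannot work even in principle. The identity you invoke is false: by \eqref{eq:shearerCriticalFunction}, $\ShearerCriticalFunction{\Graph{W}}(p)=\ShearerMeasure{\Graph{W}}{p}(Y_W=\vec{1})$ is the probability of \emph{all ones}, whereas by \eqref{eq:shearerCharacterizationProbabilityMarginalNeighbouringZeros} Shearer's measure forbids adjacent zeros, so $\ShearerMeasure{\FuzzKZ}{p}(Y_{\IntegerSet{m}}=\vec{0})=0$ for every window of length $m\ge 2$. Hence your obstruction $(1-c)^m\ge\ShearerMeasure{\FuzzKZ}{p}(Y_{\IntegerSet{m}}=\vec{0})$ is vacuous and yields no bound on $c$. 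If you repair it by using the all-ones up-event instead, you only get $c\le\ShearerCriticalFunction{\Graph{\IntegerSet{m}}}(p)^{1/m}$, and at the critical parameter the characteristic polynomial $z^{k+1}-z^k+q$ with $q=\frac{k^k}{(k+1)^{k+1}}$ has a \emph{double} root at $z=\frac{k}{k+1}$ (not a simple root shifted by $\delta\approx\frac{\ln k}{k}$), so this limit is $\frac{k}{k+1}$ -- a bound of entirely the wrong order. More fundamentally, Shearer's measure is the wrong member of the class to exhibit: as recalled in section \ref{sec:asymptoticJumpSizeKFuzzZ} via the $(k{+}1)$-factor construction, $\DominatedValue{\ShearerMeasure{\FuzzKZ}{\PShearer{\FuzzKZ}}}=\frac{k}{k+1}$, so no argument based on it can prove an upper bound of order $\frac{\ln(k+1)}{k+1}$; the whole content of the proposition is that \emph{other} fields in $\DependencyClassStrong{\FuzzKZ}(\PShearer{\FuzzKZ})$ are much worse than Shearer's measure.

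The paper's proof uses exactly such a field: take $Y$ constant on the block $\Set{0,\dotsc,k}$, equal to $\vec{1}$ with probability $p=\PShearer{\FuzzKZ}$ and to $\vec{0}$ with probability $q=\frac{k^k}{(k+1)^{k+1}}$, and an independent homogeneous BPF with parameter $p$ off the block. This $Y$ lies in $\DependencyClassStrong{\FuzzKZ}(p)$, and now the all-zero event on the block has probability $q>0$, so any dominated parameter $\sigma$ must satisfy
\begin{equation*}
	(1-\sigma)^{k+1}\ \ge\ \Proba(Y_{\Set{0,\dotsc,k}}=\vec{0})\ =\ \frac{k^k}{(k+1)^{k+1}}\,,
	\qquad\text{i.e.}\qquad
	\sigma\ \le\ 1-\frac{k^{k/(k+1)}}{k+1}\,,
\end{equation*}
which elementary estimates ($1-e^{-z}\le(1+\varepsilon)z$ with $z_k=\frac{\ln(k+1)}{k+1}$) turn into the stated bound for $k\ge K(\varepsilon)$; the weak invariant case then follows by a translation-mixing as in the proof of theorem \ref{thm:weakInvariantDominationShearerEquivalence}, and that part of your sketch (mixing stays in the weak class and cannot enlarge the set of dominated vectors) is the only piece that survives. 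You should replace your witness and delete the transfer-matrix analysis; as it stands there is no route from Shearer's measure itself to \eqref{eq:KFuzzZAsympotics}.
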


\begin{proof}
Let $\NeighboursIncluded{0}^{+}:=\Set{0,\ldots,k}$ be the non-negative closed half-ball of radius $k$ centred at $0$. Define a BRF $Y$ on $\IntNum$ by setting $\Proba(Y_{\NeighboursIncluded{0}^{+}}=\vec{1}):=\PDomination{\FuzzKZ}$, $\Proba(Y_{\NeighboursIncluded{0}^{+}}=\vec{0}):=\QDomination{\FuzzKZ}$ and letting $Y_{\IntNum\setminus\NeighboursIncluded{0}^{+}}$ be $\BernoulliProductField{\PDomination{\FuzzKZ}}{\IntNum\setminus\NeighboursIncluded{0}^{+}}$-distributed independently of $Y_{\NeighboursIncluded{0}^{+}}$.
As $Y\in\DependencyClassStrong{\FuzzKZ}(\PDomination{\FuzzKZ})$, \cite[corollary 2.5]{LiggettSchonmannStacey_domination} applies and $Y\Stochastically{\ge}X$, where $X$ is $\BernoulliProductField{\sigma}{\IntNum}$-distributed with $\sigma\in[\frac{k}{(k+1)^2},\frac{k}{k+1}]$. Lemma \ref{lem:dominationAndSubfields} implies $X_{\NeighboursIncluded{\vec{0}}^{+}}\Stochastically{\le}Y_{\NeighboursIncluded{\vec{0}}^{+}}$ and in particular the inequality
\begin{equation*}
	1-(1-\sigma)^{(k+1)}
	= \Proba(X_{\NeighboursIncluded{\vec{0}}^{+}}\not=\vec{0})
	= \Proba(Y_{\NeighboursIncluded{\vec{0}}^{+}}\not=\vec{0})
	= 1-\frac{k^k}{(k+1)^{(k+1)}}\,.
\end{equation*}
Rewrite it into
\begin{align*}
	\sigma
	&\le 1-\frac{k^{\frac{k}{k+1}}}{k+1}\\
	&=\frac{1}{k+1}
		+\frac{k}{k+1}
		(1-k^{-\frac{1}{k+1}})\\
	&\le \frac{1}{k+1}
		+(1-(k+1)^{-\frac{1}{k+1}})\ .
\end{align*}
For every $\varepsilon>0$ and $z$ close enough to $0$ we know that $1-e^{-z}\le(1+\varepsilon)z$. The statement for $\DependencyClassStrong{\FuzzKZ}$ follows from $z_k:=\frac{\ln(k+1)}{k+1}\xrightarrow[k\to\infty]{}0$. The result for $\DependencyClassWeakInvariant{\Gamma_k}$ follows from a mixing construction similar to \eqref{eq:nondominatingInvariantWeakMixing}.
\end{proof}
\section*{Acknowledgements}
\label{sec:acknowledgements}
I want to thank Yuval Peres and Rick Durrett for pointing out \cite{LiggettSchonmannStacey_domination} to me and Pierre Mathieu for listening patiently to my numerous attempts at understanding and solving this problem. This work has been partly done during a series of stays at the LATP, Aix-Marseille Université, financially supported by grants A3-16.M-93/2009-1 and A3-16.M-93/2009-2 from the Land Steiermark and by the Austrian Science Fund (FWF), project W1230-N13. I am also indebted to the anonymous referees for their constructive comments.
\bibliography{COMMON/references}

\begin{thebibliography}{10}

\bibitem{Andjel_characteristic}
E.~D. Andjel.
\newblock Characteristic exponents for two-dimensional bootstrap percolation.
\newblock {\em Ann. Probab.}, 21(2):926--935, 1993.

\bibitem{AntalPisztora_chemical}
P.~Antal and A.~Pisztora.
\newblock On the chemical distance for supercritical {B}ernoulli percolation.
\newblock {\em Ann. Probab.}, 24(2):1036--1048, 1996.

\bibitem{Billingsley_probability}
P.~Billingsley.
\newblock {\em Probability and measure}.
\newblock Wiley Series in Probability and Mathematical Statistics: Probability
  and Mathematical Statistics. John Wiley \& Sons Inc., New York, third
  edition, 1995.

\bibitem{BissacotFernandezProccaci_convergence}
R.~Bissacot, R.~Fern{\'a}ndez, and A.~Procacci.
\newblock On the convergence of cluster expansions for polymer gases.
\newblock {\em J. Stat. Phys.}, 139(4):598--617, 2010.

\bibitem{BissacotFernandezProccaciScoppola_improvement}
R.~Bissacot, R.~Fern{\'a}ndez, A.~Procacci, and B.~Scoppola.
\newblock An improvement of the {L}ovász {L}ocal {L}emma via cluster
  expansion.
\newblock {\em Combinatorics, Probability and Computing}, 2011.
\newblock Available on CJO.

\bibitem{BollobasRiordan_percolation}
B.~Bollob\'{a}s and O.~Riordan.
\newblock {\em Percolation}.
\newblock Cambrigde University Press, 2006.

\bibitem{Dobrushin_perturbation}
R.~L. Dobrushin.
\newblock Perturbation methods of the theory of {G}ibbsian fields.
\newblock In {\em Lectures on probability theory and statistics
  ({S}aint-{F}lour, 1994)}, volume 1648 of {\em Lecture Notes in Math.}, pages
  1--66. Springer, Berlin, 1996.

\bibitem{ErdosLovasz_hypergraphs}
P.~Erd{\H{o}}s and L.~Lov{\'a}sz.
\newblock Problems and results on {$3$}-chromatic hypergraphs and some related
  questions.
\newblock In {\em Infinite and finite sets ({C}olloq., {K}eszthely, 1973;
  dedicated to {P}. {E}rd{\H o}s on his 60th birthday), {V}ol. {II}}, pages
  609--627. Colloquia Mathematica Societatis J\'anos Bolyai, Vol. 10.
  North-Holland, Amsterdam, 1975.

\bibitem{FernandezProcacci_cluster}
R.~Fern{\'a}ndez and A.~Procacci.
\newblock Cluster expansion for abstract polymer models. {N}ew bounds from an
  old approach.
\newblock {\em Comm. Math. Phys.}, 274(1):123--140, 2007.

\bibitem{FisherSolow_dependence}
D.~C. Fisher and A.~E. Solow.
\newblock Dependence polynomials.
\newblock {\em Discrete Math.}, 82(3):251--258, 1990.

\bibitem{Grimmett_percolation_2nd}
G.~Grimmett.
\newblock {\em Percolation}, volume 321 of {\em Grundlehren der Mathematischen
  Wissenschaften [Fundamental Principles of Mathematical Sciences]}.
\newblock Springer-Verlag, Berlin, second edition, 1999.

\bibitem{GruberKunz_general}
C.~Gruber and H.~Kunz.
\newblock General properties of polymer systems.
\newblock {\em Comm. Math. Phys.}, 22:133--161, 1971.

\bibitem{HoedeLi_clique}
C.~Hoede and X.~L. Li.
\newblock Clique polynomials and independent set polynomials of graphs.
\newblock {\em Discrete Math.}, 125(1-3):219--228, 1994.
\newblock 13th British Combinatorial Conference (Guildford, 1991).

\bibitem{Liggett_ips}
T.~M. Liggett.
\newblock {\em Interacting particle systems}.
\newblock Classics in Mathematics. Springer-Verlag, Berlin, 2005.
\newblock Reprint of the 1985 original.

\bibitem{LiggettSchonmannStacey_domination}
T.~M. Liggett, R.~H. Schonmann, and A.~M. Stacey.
\newblock Domination by product measures.
\newblock {\em Ann. Probab.}, 25(1):71--95, 1997.

\bibitem{MathieuTemmel_kindependent}
P.~Mathieu and C.~Temmel.
\newblock K-independent percolation on trees.
\newblock {\em {S}tochastic processes and applications}, 2012.

\bibitem{Russo_zeroone}
L.~Russo.
\newblock An approximate zero-one law.
\newblock {\em Z. Wahrsch. Verw. Gebiete}, 61(1):129--139, 1982.

\bibitem{ScottSokal_repulsive}
A.~D. Scott and A.~D. Sokal.
\newblock The repulsive lattice gas, the independent-set polynomial, and the
  {L}ov\'asz local lemma.
\newblock {\em J. Stat. Phys.}, 118(5-6):1151--1261, 2005.

\bibitem{Shearer_problem}
J.~B. Shearer.
\newblock On a problem of {S}pencer.
\newblock {\em Combinatorica}, 5(3):241--245, 1985.

\bibitem{Strassen_existenceGivenMarginals}
V.~Strassen.
\newblock The existence of probability measures with given marginals.
\newblock {\em Ann. Math. Statist.}, 36:423--439, 1965.

\bibitem{Temmel_domination}
C.~Temmel.
\newblock Shearer's measure and stochastic domination of product measures.
\newblock \emph{submitted}, 2011.

\end{thebibliography}
\section{Additional Material}
\label{sec:additional}
\subsection{Intrinsic coupling and domination of \NameShearersMeasure{}}
\label{sec:shearerIntrinsics}
In this section take a look at the parameters of the BPF dominated by \NameShearersMeasure{}. We specialize proposition \ref{prop:dominationConditionalMinoration} in proposition \ref{prop:intrinsicConditionalMinoration} and find that we do not need an auxiliary BPF. Therefore a natural vector in the set $\DominatedVectors{\ShearerMeasure{G}{\vec{p}}}$ is described by the the one-vertex open extensions probabilities in proposition \ref{prop:intrinsicDominationMinoration}. We only deal with connected graphs, as the results factorize over connected components.\\

The vertex-wise $\max$ operation is defined analogously to the vertex-wise minimum \eqref{eq:vertexWiseMinimum}. It erases $0$s in realizations, thinning out independent sets of $0$s. Hence it conserves \NameShearersMeasure{}. Formally, let $Y$ be $\ShearerMeasure{G}{\vec{p}}$-distribued and $X$ be $\BernoulliProductField{V}{\vec{c}}$-distributed independently of $Y$. Then $Y\lor X$ is $\ShearerMeasure{\vec{p}+\vec{c}-\vec{c}\,\vec{p}}{G}$-distributed. This is a \emph{coupling} between $\ShearerMeasure{\vec{p}}{V}$ and $\ShearerMeasure{\vec{p}+\vec{c}-\vec{c}\,\vec{p}}{G}$ and implies that
\begin{equation}\label{eq:intrinsicCoupling}
	\ShearerMeasure{\vec{p}}{V}
	\Stochastically{\le}
	\ShearerMeasure{\vec{p}+\vec{c}-\vec{c}\,\vec{p}}{G}\,,
\end{equation}
with equality iff $X=0$. Furthermore
\begin{equation*}
	\ForAll (W,v):\quad
	\ShearerOVOEP{W}{v}(\vec{p}+\vec{c}-\vec{c}\,\vec{p})
	= \ShearerOVOEP{W}{v}(\vec{p})+c_v\left[1-\ShearerOVOEP{W}{v}(\vec{p})\right]
	\ge \ShearerOVOEP{W}{v}(\vec{p})\,.
\end{equation*}
This implies the monotonicity of $\ShearerCriticalFunction{G}$ on $\PShearerSet{G}$, the fact that $\PShearerSet{G}$ and $\PShearerSetInterior{G}$ are up-sets and the monotonicity of $\vec{x}$ from \eqref{eq:intrinsicVector} in $\vec{p}$ with $\lim_{\vec{p}\to\vec{1}} \vec{x} = \vec{1}$.

\begin{Prop}\label{prop:intrinsicConditionalMinoration}
Let $G:=(V,E)$ be connected, $\vec{p}\in\PShearerSetInterior{G}$ and $Y$ be $\ShearerMeasure{G}{\vec{p}}$-distributed. We claim that for all admissible pairs $(W,v)$
\begin{equation}\label{eq:intrinsicConditionalMinoration}
	\ForAll\vec{s}_W\in\Configurations{W}:\quad
	\ShearerMeasure{G}{\vec{p}}(Y_v=1|Y_W=\vec{s}_W)
	\ge\ShearerOVOEP{W}{v}(\vec{p})
	>0\,.
\end{equation}
\end{Prop}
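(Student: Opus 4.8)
The plan is to mirror the proof of Proposition~\ref{prop:dominationConditionalMinoration}, but without the auxiliary \NameBernoulli{} product field $X$, since \NameShearersMeasure{} itself already ``pays the cost of isolating $0$s''. Concretely, I would argue by induction on $\Cardinality{W}$ that $\ShearerMeasure{G}{\vec{p}}(Y_v=0\mid Y_W=\vec{s}_W)\le 1-\ShearerOVOEP{W}{v}(\vec{p})$, which is equivalent to \eqref{eq:intrinsicConditionalMinoration}. The base case $W=\emptyset$ is immediate: $\ShearerMeasure{G}{\vec{p}}(Y_v=0)=q_v=1-\ShearerOVOEP{\emptyset}{v}(\vec{p})$.

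For the inductive step, fix $\vec{s}_W$ and split $W\cap\Neighbours{v}$ into $N_0:=\Set{w:s_w=0}=:\Set{u_1,\dotsc,u_l}$ and $N_1:=\Set{w:s_w=1}=:\Set{w_1,\dotsc,w_m}$, with $M:=W\setminus\Neighbours{v}$. Here the key simplification over Proposition~\ref{prop:dominationConditionalMinoration} is that \NameShearersMeasure{} forbids neighbouring $0$s \eqref{eq:shearerCharacterizationProbabilityMarginalNeighbouringZeros}: if $N_0\ne\emptyset$, then conditionally on $Y_{N_0}=\vec{0}$ the event $\Set{Y_v=0}$ has probability $0$ (as $v$ is adjacent to each $u_j$), so the bound holds trivially. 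Thus we may assume $N_0=\emptyset$, and we need
\[
	\ShearerMeasure{G}{\vec{p}}(Y_v=0\mid Y_{N_1}=\vec{1},Y_M=\vec{s}_M)\le 1-\ShearerOVOEP{W}{v}(\vec{p})\,.
\]
Writing this conditional probability as a ratio and bounding as in \eqref{eq:dominationConditionalMinorationProof}: the numerator $\ShearerMeasure{G}{\vec{p}}(Y_v=0,Y_{N_1}=\vec{1},Y_M=\vec{s}_M)\le\ShearerMeasure{G}{\vec{p}}(Y_v=0\mid Y_M=\vec{s}_M)\,\ShearerMeasure{G}{\vec{p}}(Y_M=\vec{s}_M)\le q_v\,\ShearerMeasure{G}{\vec{p}}(Y_M=\vec{s}_M)$ since $\GraphMetric{v}{M}\ge1$ implies $Y_v\IndependentOf Y_M$ and the marginal of $Y_v$ is $p_v$; for the denominator, chain-rule $\ShearerMeasure{G}{\vec{p}}(Y_{N_1}=\vec{1},Y_M=\vec{s}_M)=\ShearerMeasure{G}{\vec{p}}(Y_M=\vec{s}_M)\prod_{i=1}^m\ShearerMeasure{G}{\vec{p}}(Y_{w_i}=1\mid Y_{w_1}=\dots=Y_{w_{i-1}}=1,Y_M=\vec{s}_M)$ and apply the induction hypothesis to each factor (each condition has strictly smaller cardinality than $W$) to get $\ge\prod_{i=1}^m\ShearerOVOEP{M\uplus\Set{w_1,\dotsc,w_{i-1}}}{w_i}(\vec{p})$. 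This yields the bound $q_v/\prod_{i=1}^m\ShearerOVOEP{M\uplus\Set{w_1,\dotsc,w_{i-1}}}{w_i}(\vec{p})$, which by the fundamental identity \eqref{eq:shearerOVOEPFundamentalIdentity} (with $N_0=\emptyset$, so the product there is exactly over the vertices of $N_1$) equals $1-\ShearerOVOEP{W}{v}(\vec{p})$. The strict positivity $\ShearerOVOEP{W}{v}(\vec{p})>0$ follows from $\vec{p}\in\PShearerSetInterior{G}$, since then $\ShearerCriticalFunction{\Graph{U}}(\vec{p})>0$ for every finite $U$ and $\ShearerOVOEP{W}{v}(\vec{p})=\ShearerCriticalFunction{\Graph{W\uplus\Set{v}}}(\vec{p})/\ShearerCriticalFunction{\Graph{W}}(\vec{p})$.

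The main obstacle, as in Proposition~\ref{prop:dominationConditionalMinoration}, is bookkeeping: ensuring all conditional probabilities are well-defined (this needs $\vec{p}\StrictlyGreater\vec{0}$, which is implied by $\vec{p}\in\PShearerSetInterior{G}$, and the positivity of $\ShearerCriticalFunction{\Graph{U}}(\vec{p})$ for the relevant finite $U$), and making sure the fundamental identity \eqref{eq:shearerOVOEPFundamentalIdentity} is applied with the correct neighbour set. The genuine content is the observation that the presence of any $0$-neighbour $u_j$ kills the event outright, which is precisely why the auxiliary $\BernoulliProductField{\vec{q}}{V}$ perturbation of Proposition~\ref{prop:dominationConditionalMinoration} is unnecessary here; everything else transcribes verbatim with $Z$ replaced by $Y$ and $q_v\ShearerOVOEP{W}{v}(\vec{p})$ replaced by $\ShearerOVOEP{W}{v}(\vec{p})$.
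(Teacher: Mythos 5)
Your proposal is correct and follows essentially the same route as the paper's own proof: induction on $\Cardinality{W}$, the observation that a $0$ among $W\cap\Neighbours{v}$ kills the event $\Set{Y_v=0}$ outright via \eqref{eq:shearerCharacterizationProbabilityMarginalNeighbouringZeros}, then bounding the numerator by independence of $Y_v$ from $Y_M$ and the denominator by the chain rule plus the induction hypothesis, concluding with the fundamental identity \eqref{eq:shearerOVOEPFundamentalIdentity}. The only cosmetic differences are that the paper exploits the exact identity $\ShearerMeasure{G}{\vec{p}}(Y_v=0,Y_{N_1}=\vec{1},Y_M=\vec{s}_M)=q_v\,\ShearerMeasure{G}{\vec{p}}(Y_M=\vec{s}_M)$ (since $Y_v=0$ already forces its neighbours to be $1$) where you merely drop the event $\Set{Y_{N_1}=\vec{1}}$, and the independence you invoke really uses $\GraphMetric{v}{M}\ge 2$, which indeed holds since $M=W\setminus\Neighbours{v}$ and $v\notin W$.
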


\begin{proof}
The fact that $\vec{p}\in\PShearerSetInterior{G}$ implies that all admissible $\ShearerOVOEP{W}{v}(\vec{p})$ are well defined and non-zero.\\

We prove \eqref{eq:intrinsicConditionalMinoration} inductively over the cardinality of $W$. The induction base for $W=\emptyset$ is $\ShearerMeasure{G}{\vec{p}}(Y_v=1)=p_v=\ShearerOVOEP{\emptyset}{v}(\vec{p})$. For the induction step let $M:=W\setminus\Neighbours{v}$ and $N:=W\cap\Neighbours{v}$. Let $\vec{s}_W\in\Configurations{W}$ and assume that $\ShearerMeasure{G}{\vec{p}}(Y_W=\vec{s}_W)>0$. The first case is $\vec{s}_N\not=\vec{1}$, whereby
\begin{equation*}
\ShearerMeasure{G}{\vec{p}}(Y_v=0|Y_W=\vec{s}_W)
=\frac%
	  {\ShearerMeasure{G}{\vec{p}}(Y_v=0,Y_N\not=\vec{1},Y_M=\vec{s}_M)}
	  {\ShearerMeasure{G}{\vec{p}}(Y_Y=\vec{s}_W)}
=0\,,
\end{equation*}
as there are neighbouring zeros in $(Y_v,Y_N)$. The second case is $\vec{s}_N=\vec{1}$. Let $\Set{w_1,\dotsc,w_m}:=N$.
Use the fundamental identity \eqref{eq:shearerOVOEPFundamentalIdentity} to get
\begin{align*}
	&\FirstAlign\ShearerMeasure{G}{\vec{p}}(Y_v=0|Y_W=\vec{s}_W)\\
	&=\frac%
	  {\ShearerMeasure{G}{\vec{p}}(Y_v=0,Y_N=\vec{1},Y_M=\vec{s}_M)}
	  {\ShearerMeasure{G}{\vec{p}}(Y_N=\vec{1},Y_M=\vec{s}_M)}\\
	&=\frac%
	  {\ShearerMeasure{G}{\vec{p}}(Y_v=0)\ShearerMeasure{G}{\vec{p}}(Y_M=\vec{s}_M)}
	  {\ShearerMeasure{G}{\vec{p}}(Y_N=\vec{1},Y_M=\vec{s}_M)}\\
	&=\frac{q_v}
	  {\ShearerMeasure{G}{\vec{p}}(Y_N=\vec{1}|Y_M=\vec{s}_M)}\\
	&=\frac{q_v}
	  {\prod_{i=1}^m \ShearerMeasure{G}{\vec{p}}
		  (Y_{w_i}=1|Y_{\Set{w_1,\dotsc,w_{i-1}}}=\vec{1}, Y_M=\vec{s}_M)}\\
	&\le\frac{q_v}
	  {\prod_{i=1}^m \ShearerOVOEP{M\uplus\Set{w_1,\dotsc,w_{i-1}}}{w_i}(\vec{p})}\\
	&= 1 - \ShearerOVOEP{W}{v}(\vec{p})\,.
\end{align*}
\end{proof}

\begin{Prop}\label{prop:intrinsicDominationMinoration}
Let $G$ be infinite and connected. Assume that $\vec{p}\in\PShearerSetInterior{G}$. Define the vector $\vec{x}$ by
\begin{subequations}
\begin{equation}\label{eq:intrinsicVector}
	\ForAll v\in V:\qquad
	x_v:=\inf\Set{\ShearerOVOEP{W}{v}(\vec{p}): (W,v)\text{ is escaping}}\,.
\end{equation}
Then
\begin{equation}\label{eq:intrinsicVectorMinoration}
	\ShearerMeasure{G}{\vec{p}}
	\Stochastically{\ge}
	\BernoulliProductField{\vec{x}}{V}\,,
	\qquad\text{ that is }
	\vec{x}\in\DominatedVectors{\ShearerMeasure{G}{\vec{p}}}\,,
\end{equation}
and
\begin{equation}
	\ForAll v\in V:\quad
	x_v\ge\min\Set{q_w:w\in\Neighbours{v}}>0\,.
\end{equation}

\end{subequations}
\end{Prop}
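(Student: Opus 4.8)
The plan is to transplant the argument of proposition~\ref{prop:shearerImpliesUniformDominationInfinite} to the present ``no auxiliary field'' setting, feeding it the intrinsic conditional minoration of proposition~\ref{prop:intrinsicConditionalMinoration} in place of the combination of propositions~\ref{prop:dominationConditionalMinoration} and~\ref{prop:shearerOVOEPBounds}. By lemma~\ref{lem:dominationAndSubfields} it suffices to prove $Y_W\Stochastically{\ge}\BernoulliProductField{\vec{x}_W}{W}$ for every finite $W\subsetneq V$, where $Y\DistributedAs\ShearerMeasure{G}{\vec{p}}$. Note first that $\vec{p}\in\PShearerSetInterior{G}$ makes every pair $(U,v)$ with $U$ finite and $v\notin U$ admissible (that is $\ShearerCriticalFunction{\Graph{U}}(\vec{p})>0$), so the infimum in~\eqref{eq:intrinsicVector} effectively ranges over all escaping pairs at $v$.

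Fix a finite $W\subsetneq V$ with $\Cardinality{W}=n$. Since $G$ is infinite and connected, one peels $W$ exactly as in proposition~\ref{prop:shearerImpliesUniformDominationInfinite}: pick $v_n\in W$ with a neighbour outside $W$, then $v_{n-1}\in W\setminus\Set{v_n}$ with a neighbour outside $W\setminus\Set{v_n}$, and so on, producing a total order $v_1\prec\dotsb\prec v_n$ of $W$ such that, setting $W_i:=\Set{v_1,\dotsc,v_{i-1}}$, each $(W_i,v_i)$ is escaping (the case $i=1$ works because $G$ has no isolated vertices). Proposition~\ref{prop:intrinsicConditionalMinoration} then gives, for every $i$ and every $\vec{s}_{W_i}\in\Configurations{W_i}$,
\[
	\ShearerMeasure{G}{\vec{p}}(Y_{v_i}=1|Y_{W_i}=\vec{s}_{W_i})
	\ \ge\ \ShearerOVOEP{W_i}{v_i}(\vec{p})
	\ \ge\ x_{v_i},
\]
the last inequality being immediate from~\eqref{eq:intrinsicVector} because $(W_i,v_i)$ is escaping, so $\ShearerOVOEP{W_i}{v_i}(\vec{p})$ lies in the set whose infimum defines $x_{v_i}$. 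Proposition~\ref{prop:russoInhomogeneousExtension}, applied to the finite sequence $Y_{v_1},\dotsc,Y_{v_n}$ (the same induction works on a finite initial segment), then yields $Y_W\Stochastically{\ge}\BernoulliProductField{\vec{x}_W}{W}$, and lemma~\ref{lem:dominationAndSubfields} upgrades this to $\ShearerMeasure{G}{\vec{p}}\Stochastically{\ge}\BernoulliProductField{\vec{x}}{V}$, which is~\eqref{eq:intrinsicVectorMinoration}.

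The pointwise lower bound on $\vec{x}$ is a one-liner from proposition~\ref{prop:shearerOVOEPBounds}: for any escaping $(W,v)$ and any escape $w\in\Neighbours{v}\setminus W$, inequality~\eqref{eq:shearerOVOEPBoundsBelow} gives $\ShearerOVOEP{W}{v}(\vec{p})\ge q_w\ge\min\Set{q_u:u\in\Neighbours{v}}$, and taking the infimum over escaping pairs yields $x_v\ge\min\Set{q_u:u\in\Neighbours{v}}$; this is strictly positive as long as $\vec{p}\StrictlyLess\vec{1}$ (were some neighbour of $v$ to satisfy $p_u=1$ one would only get $x_v\ge 0$, cf.~theorem~\ref{thm:uniformlyDominatedVector}). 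I do not expect a genuine obstacle: the real content was already discharged in proposition~\ref{prop:intrinsicConditionalMinoration}, and the only point needing care is that the infimum defining $\vec{x}$ ranges over \emph{precisely} the escaping pairs that arise along peelings of finite subsets, so the two halves of the argument dovetail without slack; everything else is bookkeeping, together with the routine remark that proposition~\ref{prop:russoInhomogeneousExtension}, stated over $\NatNum$, applies verbatim to a finite initial segment.
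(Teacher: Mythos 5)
Your proposal is exactly the paper's proof: the paper itself proves proposition~\ref{prop:intrinsicDominationMinoration} by rerunning the peeling argument of proposition~\ref{prop:shearerImpliesUniformDominationInfinite} with the intrinsic bound \eqref{eq:intrinsicConditionalMinoration} from proposition~\ref{prop:intrinsicConditionalMinoration} in place of the auxiliary BPF and \eqref{eq:dominationConditionalMinoration}, then concludes via propositions~\ref{prop:russoInhomogeneousExtension}, \ref{prop:shearerOVOEPBounds} and lemma~\ref{lem:dominationAndSubfields}, just as you do. Your side remark that strict positivity of $x_v$ needs $\vec{p}\StrictlyLess\vec{1}$ (implicit in the paper's statement, cf.\ the hypothesis of proposition~\ref{prop:shearerImpliesUniformDominationInfinite}) is a fair and correct observation, not a deviation.
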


\begin{proof}
This is the same proof as the one for proposition \ref{prop:shearerImpliesUniformDominationInfinite}, except that instead of using the auxiliary BPF $X$ and \eqref{eq:dominationConditionalMinoration} from proposition \ref{prop:dominationConditionalMinoration} we use \eqref{eq:intrinsicConditionalMinoration} from proposition \ref{prop:intrinsicConditionalMinoration} directly.
\end{proof}

\begin{Prop}\label{prop:tightnessOnZ2}
Let $G:=\IntNum^2$. Define the sets
\begin{subequations}\label{eq:tightnessOnZ2}
\begin{equation}\label{eq:tightnessOnZ2Set}
	W_{(n,k,l)}:=\Set{(x,y): 0\le x< n, 0\le y<k+l}\uplus\Set{x=n,0\le y<k}
\end{equation}
and the value
\begin{equation}\label{eq:tightnessOnZ2Value}
	a(p):=\inf\Set{\ShearerOVOEP{W_{(n,k,l)}}{(n,k)}(p)}\ge q\,.
\end{equation}
Then, with $G_N$ being the subgraph induced by $V_N:=\Set{(x,y): 0\le x,y<N}$, we have
\begin{equation}\label{eq:tightnessOnZ2Limit}
	\lim_{N\to\infty} \frac{\log\ShearerCriticalFunction{G_N}(p)}{N^2} = \log a(p)\,.
\end{equation}
This implies that
\begin{equation}\label{eq:tightnessOnZ2DominatedParameter}
	\DominatedValue{\ShearerMeasure{\IntNum^2}{p}}
	=a(p)
	\ge\QShearer{\IntNum^2}
	>0\,.
\end{equation}
\end{subequations}
\end{Prop}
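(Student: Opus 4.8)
The idea is to realise $\ShearerCriticalFunction{G_N}(p)$ as a telescoping product of one-vertex open extension probabilities read along the column ordering of $V_N$, and to recognise $\log a(p)$ as the resulting Cesàro limit; the statement \eqref{eq:tightnessOnZ2DominatedParameter} then follows by testing stochastic domination against the indicator of $\Set{\vec{1}}$ and by a Russo-type coupling. Assume $p<1$ (for $p=1$ everything is $1$). Enumerate $V_N$ column by column, and within each column from bottom to top; the set $W$ of predecessors of a vertex $v=(n,k)$ is then exactly $W_{(n,k,N-k)}$. Since $p\in\PShearerSetInterior{G_N}$, all the conditional probabilities below are positive, and the chain rule for $\ShearerMeasure{G_N}{p}$ gives
\[
	\ShearerCriticalFunction{G_N}(p)
	=\ShearerMeasure{G_N}{p}(Y_{V_N}=\vec{1})
	=\prod_{n=0}^{N-1}\prod_{k=0}^{N-1}
	\ShearerOVOEP{W_{(n,k,N-k)}}{(n,k)}(p)\,.
\]
Each factor is one of the terms in the infimum defining $a(p)$, hence $\ge a(p)$, so $\tfrac1{N^2}\log\ShearerCriticalFunction{G_N}(p)\ge\log a(p)$; and \eqref{eq:shearerOVOEPBoundsBelow} applied to the escape $(n,k+1)$ gives $a(p)\ge q>0$.

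For the matching upper bound I would first show that $f(n,k,l):=\ShearerOVOEP{W_{(n,k,l)}}{(n,k)}(p)$ is non-increasing in each of $n$, $k$ and $l$ separately: enlarging $l$ enlarges $W_{(n,k,l)}$ with the marked vertex fixed, while enlarging $n$ (resp.\ $k$) enlarges $W_{(n,k,l)}$ after translating the picture so that the marked vertex returns to $(n,k)$ --- in all three cases proposition \ref{prop:shearerOVOEPDecreasing} and the translation invariance of $\ShearerMeasure{\IntNum^2}{p}$ apply. Hence $f(n,k,l)\downarrow a(p)$ as $\min\Set{n,k,l}\to\infty$, so for each $\varepsilon>0$ there is an $m$ with $f(n,k,l)\le a(p)+\varepsilon<1$ whenever $n,k,l\ge m$. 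In the product above all but $O(mN)$ of the $N^2$ index triples $(n,k,N-k)$ have all three entries $\ge m$; bounding those ``bulk'' factors by $a(p)+\varepsilon$ and the remaining $O(mN)$ factors trivially by $p\le1$ (via \eqref{eq:shearerOVOEPBoundsAbove}) gives
\[
	\frac1{N^2}\log\ShearerCriticalFunction{G_N}(p)
	\le\frac{N^2-O(mN)}{N^2}\,\log(a(p)+\varepsilon)
	\xrightarrow[N\to\infty]{}\log(a(p)+\varepsilon)\,,
\]
whence $\limsup_N\tfrac1{N^2}\log\ShearerCriticalFunction{G_N}(p)\le\log a(p)$ on letting $\varepsilon\downarrow0$. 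Together with the lower bound this is \eqref{eq:tightnessOnZ2Limit}.

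The consequences follow quickly. If $\ShearerMeasure{\IntNum^2}{p}\Stochastically{\ge}\BernoulliProductField{c}{\IntNum^2}$, then testing against $\Indicator{\Set{\vec{1}}}\in\MonotoneFunctionsOn{V_N}$ yields $\ShearerCriticalFunction{G_N}(p)=\ShearerMeasure{\IntNum^2}{p}(Y_{V_N}=\vec{1})\ge c^{N^2}$, so $\log c\le\log a(p)$ by \eqref{eq:tightnessOnZ2Limit}; hence $\DominatedValue{\ShearerMeasure{\IntNum^2}{p}}\le a(p)$. Conversely, along the column ordering of $V_N$ the predecessor set of $v=(n,k)$ is $W_{(n,k,N-k)}$, so proposition \ref{prop:intrinsicConditionalMinoration} gives $\ShearerMeasure{G_N}{p}(Y_v=1\,|\,Y_W=\vec{s}_W)\ge\ShearerOVOEP{W_{(n,k,N-k)}}{(n,k)}(p)\ge a(p)$ for every $\vec{s}_W$; the inductive coupling underlying proposition \ref{prop:russoInhomogeneousExtension} then gives $Y_{V_N}\Stochastically{\ge}\BernoulliProductField{a(p)}{V_N}$, and since every finite subset of $\IntNum^2$ sits inside a translate of some $V_N$, lemma \ref{lem:dominationAndSubfields} and the translation invariance of $\ShearerMeasure{\IntNum^2}{p}$ upgrade this to $\ShearerMeasure{\IntNum^2}{p}\Stochastically{\ge}\BernoulliProductField{a(p)}{\IntNum^2}$, so $\DominatedValue{\ShearerMeasure{\IntNum^2}{p}}\ge a(p)$. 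Finally $a(p)$ is non-decreasing in $p$ by the intrinsic coupling \eqref{eq:intrinsicCoupling}, while $a(p')\ge1-p'$ for every $p'\in\PShearerSetInterior{\IntNum^2}$; letting $p'\downarrow\PShearer{\IntNum^2}$ gives $a(p)\ge\QShearer{\IntNum^2}=1-\PShearer{\IntNum^2}>0$.

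The main obstacle is the upper bound in \eqref{eq:tightnessOnZ2Limit}: one has to be confident that the column-prefix extension probabilities really decrease monotonically to the global infimum $a(p)$ throughout the bulk of the box, so that the boundary layer --- where only the crude bound $\le p$ is available --- contributes just $O(mN)=o(N^2)$ factors. Everything else is bookkeeping with the fundamental identity and the tools already in place.
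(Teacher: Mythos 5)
Your proposal is correct and follows essentially the same route as the paper: write $\ShearerCriticalFunction{G_N}(p)$ as a telescoping product of one-vertex open extension probabilities along an ordering whose prefixes are (up to lattice symmetry) the sets $W_{(n,k,l)}$, use the componentwise monotonicity of $\ShearerOVOEP{W_{(n,k,l)}}{(n,k)}(p)$ to squeeze the per-site average to $\log a(p)$ with an $o(N^2)$ boundary correction, then get $\DominatedValue{\ShearerMeasure{\IntNum^2}{p}}\le a(p)$ from \eqref{eq:tightnessOnZ2Limit} via $\Indicator{\Set{\vec{1}}}$ and $\ge a(p)$ by feeding proposition \ref{prop:intrinsicConditionalMinoration} into the Russo coupling, with the $\QShearer{\IntNum^2}$ bound from monotonicity in $p$. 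The only deviation is bookkeeping: you use the column-by-column enumeration (prefixes exactly $W_{(n,k,N-k)}$, at most $3mN$ boundary factors) where the paper spirals outward from the centre with its $4N(k+l)$ correction, which changes nothing essential.
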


\begin{Rem}
The result of proposition \ref{prop:tightnessOnZ2} should be easily generalizable to $\IntNum^d$ and other $d$-dimensional transitive lattice like graphs. I even go so far as to conjecture that something similar should hold on all infinite, locally finite quasi-transitive graphs with quasi-transitive parameters. The obstacle seems mostly notational, especially in writing down a nice subset of escaping $(W,v)$s exhausting $V$.\\

Contrast the proof of proposition \ref{prop:tightnessOnZ2} with the subadditive approach in \cite[section $8.3$]{ScottSokal_repulsive}.
\end{Rem}

\begin{proof}
We see that for $(n,k,l)\le (\bar{n},\bar{k},\bar{l})$ we have
\begin{equation*}
	\ShearerOVOEP{W_{(n,k,l)}}{(n,k)}(p)
	\ge\ShearerOVOEP{W_{(\bar{n},\bar{k},\bar{l})}}{(\bar{n},\bar{k})}(p)
\end{equation*}
and hence
\begin{equation*}
	a(p)=\lim_{n,k.l\to\infty} \ShearerOVOEP{W_{(n,k,l)}}{(n,k)}(p)
\end{equation*}
is well-defined.\\

Enumerate $V_N=:\Set{v_1,\dotsc,v_{N^2}}$ starting from $(\Floor{\frac{N}{2}},\Floor{\frac{N}{2}})$ stepping to the left for the first step and spiraling outwards anti-clockwise around $(\Floor{\frac{N}{2}},\Floor{\frac{N}{2}})$. Setting $W_i:=\Set{v_1,\dotsc,v_{i-1}}$ we have
\begin{equation*}
	\ShearerCriticalFunction{G_N}(p)
	=\prod_{i=1}^{N^2} \ShearerOVOEP{W_i}{v_i}(p)
	=\prod_{i=1}^{N^2} \ShearerOVOEP{W_{(n_i,k_i,l_i)}}{(n_i,k_i)}(p)\,.
\end{equation*}

Thus on the one hand we have
\begin{equation*}
	\ShearerCriticalFunction{G_N}(p)\ge a(p)^{N^2}\,.
\end{equation*}

On the other hand choose $\varepsilon>0$. Then there exists a $(n,k.l)$ such that $\ShearerOVOEP{W_{(n,k,l)}}{(n,k)}(p)\le a(p)+\varepsilon$. We estimate roughly
\begin{multline*}
	\ShearerCriticalFunction{G_N}(p)
	= \ShearerCriticalFunction{\Graph{W_{(n,k,l)}}}(p)
		\prod_{i=(n\lor(k+l+1))^2}^{N^2}
			\ShearerOVOEP{W_{(n_i,k_i,l_i)}}{(n_i,k_i)}(p)\\
	\le \ShearerCriticalFunction{\Graph{W_{(n,k,l)}}}(p)
		\left(a(p)+\varepsilon\right)^{N^2-(n\lor(k+l+1))^2-4N(k+l)}\,.
\end{multline*}
Therefore
\begin{equation*}
	a(p)^{N^2}
	\le \ShearerCriticalFunction{G_N}(p)
	\le \ShearerCriticalFunction{\Graph{W_{(n,k,l)}}}(p)
		\left(a(p)+\varepsilon\right)^{N^2-(n\lor(k+l+1))^2-4N(k+l)}
\end{equation*}
and
\begin{multline*}
	\log a(p)
	\le \frac{\log \ShearerCriticalFunction{G_N}(p)}{N^2}\\
	\le \frac{\log \ShearerCriticalFunction{\Graph{W_{(n,k,l)}}}(p)}{N^2}
	+ \frac{N^2-(n\lor(k+l+1))^2-4N(k+l)}{N^2}\log(a(p)+\varepsilon)\,,
\end{multline*}
resulting in \eqref{eq:tightnessOnZ2Limit} by taking the limit.\\

For \eqref{eq:tightnessOnZ2DominatedParameter} adapt the reasoning from proposition \ref{prop:intrinsicDominationMinoration} to the escaping $\ShearerOVOEP{W_{(n,k,l)}}{(n,k)}(p)$, do the coupling from \eqref{eq:intrinsicCoupling} and the estimate from theorem \ref{thm:LLLClassical} to get a lower bound $\DominatedValue{\ShearerMeasure{\IntNum^2}{p}}\ge\DominatedValue{\ShearerMeasure{\IntNum^2}{\PShearer{\IntNum^2}}}\ge\QShearer{\IntNum^2}>0$. The upper bound follows straight from \eqref{eq:tightnessOnZ2Limit} (use the monotone functions $f_W:=\Indicator{\vec{1}_W}$).
\end{proof}
\subsection{Tools for stochastic domination II}
\label{sec:toolsForStochasticDominationTwo}
This section contains a number of proofs omitted in section \ref{sec:toolsForStochasticDomination} as well as some additional comments regarding stochastic domination of BRFs.\\

For $W\subset V$ and $\vec{s}_W\in\Configurations{W}$ we define the \emph{cylinder set} $\Cylinder{W}{\vec{s}_W}$ by
\begin{equation}\label{eq:SOCcylinder}
	\Cylinder{W}{\vec{s}_W}
	:= \Set{\vec{t}\in\Configurations{V}: \vec{t}_W=\vec{s}_W}\,.
\end{equation}

\begin{Lem}[{\cite[chapter II, theorem 2.4]{Liggett_ips}}]
\label{lem:stochasticDominationCouplingEquivalence}
Let $Y,Z$ be two BRFs indexed by $V$, then $Y\Stochastically{\ge}Z$ iff there exists a $\nu\in\ProbabilityMeasureSpace{\Configurations{V}^2}$ such that
\begin{subequations}\label{eq:stochasticDominationCouplingEquivalence}
\begin{gather}
	\ForAll\text{ finite }W\subseteq V,\ForAll \vec{s}_W\in\Configurations{W}:\quad
	\nu(\Cylinder{W}{\vec{s}_W}\times\Configurations{V})=\Proba(Y_W=\vec{s}_W)\\
	\ForAll\text{ finite }W\subseteq V,\ForAll \vec{t}_W\in\Configurations{W}:\quad
	\nu(\Configurations{V}\times\Cylinder{W}{\vec{t}_W})=\Proba(Z_W=\vec{t}_W)\\
	\label{eq:dominationAboveDiagonal}
	\nu(\Set{(\vec{s},\vec{t})\in\Configurations{V}^2: \vec{s}\ge\vec{t}})=1\,.
\end{gather}
\end{subequations}
\end{Lem}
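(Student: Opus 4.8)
The plan is to prove the two implications separately: the ``if'' direction is immediate, and the ``only if'' direction follows by applying Strassen's coupling theorem \cite{Strassen_existenceGivenMarginals} on each finite coordinate set and then passing to the limit via compactness of the space of couplings.

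For the ``if'' direction, suppose a $\nu\in\ProbabilityMeasureSpace{\Configurations{V}^2}$ with the three listed properties exists. By the $\pi$-$\lambda$ theorem the first two conditions identify the coordinate marginals of $\nu$ with the laws of $Y$ and $Z$. Hence for any $f\in\MonotoneFunctionsOn{V}$, which is bounded as it is continuous on the compact space $\Configurations{V}$, we have $\Expect[f(Y)]=\int f(\vec s)\,\nu(d\vec s,d\vec t)$ and $\Expect[f(Z)]=\int f(\vec t)\,\nu(d\vec s,d\vec t)$. Since $\nu$ is concentrated on $\Set{\vec s\ge\vec t}$ and $f$ is monotone, $f(\vec s)\ge f(\vec t)$ holds $\nu$-almost everywhere, so integrating gives $\Expect[f(Y)]\ge\Expect[f(Z)]$; as $f$ is arbitrary, $Y\Stochastically{\ge}Z$.

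For the ``only if'' direction, assume $Y\Stochastically{\ge}Z$. By lemma \ref{lem:dominationAndSubfields} we get $Y_W\Stochastically{\ge}Z_W$ for every finite $W\subseteq V$, and since $\Configurations{W}$ is finite, Strassen's theorem \cite{Strassen_existenceGivenMarginals} supplies a coupling $\nu_W\in\ProbabilityMeasureSpace{\Configurations{W}^2}$ of $(Y_W,Z_W)$ supported on $\Set{(\vec s_W,\vec t_W):\vec s_W\ge\vec t_W}$. Extend $\nu_W$ to $\hat\nu_W\in\ProbabilityMeasureSpace{\Configurations{V}^2}$ by fixing every coordinate outside $W$ to $0$ in both components; as $0\ge 0$ this keeps $\hat\nu_W$ supported on $D:=\Set{(\vec s,\vec t):\vec s\ge\vec t}$, while its $W$-marginals remain $\Law(Y_W)$ and $\Law(Z_W)$. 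The net $(\hat\nu_W)_W$, over the finite subsets of $V$ directed by inclusion, lies in $\ProbabilityMeasureSpace{\Configurations{V}^2}$, which is compact for the weak topology since $\Configurations{V}^2$ is compact; let $\nu$ be a subnet limit. For fixed finite $U\subseteq V$ and $\vec s_U\in\Configurations{U}$, the cylinder $\Cylinder{U}{\vec s_U}\times\Configurations{V}$ is clopen, and $\hat\nu_W(\Cylinder{U}{\vec s_U}\times\Configurations{V})=\Proba(Y_U=\vec s_U)$ for every $W\supseteq U$, so the limit equals $\Proba(Y_U=\vec s_U)$ as well; together with the mirror statement for the second coordinate this yields the first two conditions. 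Finally $D$ is closed, being an intersection of the clopen sets $\Set{s_v\ge t_v}$, so the portmanteau theorem gives $\nu(D)\ge\limsup_W\hat\nu_W(D)=1$, the third condition.

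The crux is this passage from finite coordinate sets to all of $V$: one cannot simply glue the $\nu_W$ along a Kolmogorov-style consistent family, because distinct Strassen couplings need not be consistent under restriction, and a direct one-vertex extension of a monotone coupling is itself delicate. Compactness of the coupling space avoids the issue, at the cost of the routine care needed to check that both the marginal identities (because cylinder events are continuous) and the support condition (because $D$ is closed) survive the weak limit. If $V$ is uncountable one works with nets and the weak-$*$ compactness of the regular Borel probability measures on the compact space $\Configurations{V}^2$, restricting the limit to the product $\sigma$-algebra; for countable $V$ a subsequence suffices and $\Configurations{V}^2$ is metrizable.
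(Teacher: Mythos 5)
Your argument is correct, and it is worth noting that the paper itself gives no proof of this lemma at all: it is quoted verbatim from Liggett's book (chapter II, theorem 2.4), so there is nothing internal to compare against. Your two directions are the standard ones. The ``if'' direction is fine: the $\pi$-$\lambda$ identification of the marginals and the pointwise inequality $f(\vec s)\ge f(\vec t)$ on the support set do the job. The ``only if'' direction via finite-dimensional Strassen couplings, extension by zeros, and weak-$*$ compactness of $\ProbabilityMeasureSpace{\Configurations{V}^2}$ is sound; the points you flag (clopen cylinders give convergence of the marginal identities, closedness of $D$ plus portmanteau gives $\nu(D)=1$, cofinality of the subnet ensures the marginal identity is eventually exact for each fixed finite $U$) are exactly the ones that need checking, and they all go through. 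Two small remarks. First, you could avoid subnets altogether by a finite-intersection-property argument: for each finite $W$ the set $C_W$ of measures supported on $D$ with correct $U$-marginals for all $U\subseteq W$ is nonempty (it contains $\hat\nu_W$) and weak-$*$ closed, and $C_{W}\cap C_{W'}\supseteq C_{W\cup W'}$, so compactness gives a point in $\bigcap_W C_W$, which is the desired $\nu$. Second, your caveat about uncountable $V$ is slightly misplaced: the real issue there is that $D$ is an uncountable intersection of cylinder events and hence need not lie in the product $\sigma$-algebra, so condition \eqref{eq:dominationAboveDiagonal} only makes sense for Borel (Radon) measures on the compact space; but this is a defect of the statement's setting rather than of your proof, and in the paper's context ($G$ locally finite, effectively countable $V$) the Borel and product $\sigma$-algebras coincide and the issue disappears.
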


\begin{Rem}
The coupling probability measure $\nu$ in lemma \ref{lem:stochasticDominationCouplingEquivalence} is in general not unique.
\end{Rem}

\begin{Prop}\label{prop:dominationBetweenBRFsImplications}
Let $Y$ and $Z$ be two BRFs indexed by the same set $V$. Then we have:
\begin{equation}\label{eq:dominationBetweenBRFsImplications}
	Y\Stochastically{\ge}Z
	\Then\ForAll\text{ finite }W\subseteq V:\quad
	\left(\begin{gathered}
		\Proba(Y_W=\vec{1})\ge\Proba(Z_W=\vec{1})\\
		\text{ and }\\
		\Proba(Y_W=\vec{0})\le\Proba(Z_W=\vec{0})
	\end{gathered}\right)\,.
\end{equation}
\end{Prop}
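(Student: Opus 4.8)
The plan is to reduce the statement to a fixed finite index set and then probe stochastic domination with two well-chosen monotone functions. First I would apply lemma~\ref{lem:dominationAndSubfields}: from $Y\Stochastically{\ge}Z$ it gives $Y_W\Stochastically{\ge}Z_W$ for every finite $W\subseteq V$, so it suffices to establish both inequalities for a fixed such $W$, with $V$ replaced by $W$ throughout.

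Next, on the finite set $W$ the space $\Configurations{W}$ is discrete, so every function on it is continuous and being ``monotone'' just means being order preserving. I would use the two functions $f:=\Indicator{\Set{\vec{1}}}$ and $g:=-\Indicator{\Set{\vec{0}}}$ on $\Configurations{W}$: if $\vec{s}\le\vec{t}$ with $\vec{s}=\vec{1}$ then $\vec{t}=\vec{1}$, and if $\vec{s}\le\vec{t}$ with $\vec{t}=\vec{0}$ then $\vec{s}=\vec{0}$, so $f,g\in\MonotoneFunctionsOn{W}$. Plugging these into the definition of stochastic domination \eqref{eq:stochasticDomination} for $Y_W\Stochastically{\ge}Z_W$ yields
\begin{equation*}
	\Proba(Y_W=\vec{1})=\Expect[f(Y_W)]\ge\Expect[f(Z_W)]=\Proba(Z_W=\vec{1})
\end{equation*}
and
\begin{equation*}
	-\Proba(Y_W=\vec{0})=\Expect[g(Y_W)]\ge\Expect[g(Z_W)]=-\Proba(Z_W=\vec{0})\,,
\end{equation*}
the second of which rearranges to $\Proba(Y_W=\vec{0})\le\Proba(Z_W=\vec{0})$. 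Since $W$ was an arbitrary finite subset of $V$, this proves \eqref{eq:dominationBetweenBRFsImplications}.

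I do not expect any genuine obstacle: the statement is an immediate consequence of testing \eqref{eq:stochasticDomination} against the right functions, and the only thing to verify --- that $\Indicator{\Set{\vec{1}}}$ and $-\Indicator{\Set{\vec{0}}}$ lie in $\MonotoneFunctionsOn{W}$ --- is trivial because $W$ is finite. If a coupling argument is preferred, lemma~\ref{lem:stochasticDominationCouplingEquivalence} supplies a measure $\nu$ on $\Configurations{V}^2$ concentrated on $\Set{(\vec{s},\vec{t}):\vec{s}\ge\vec{t}}$ with the correct marginals; then under $\nu$ the set where $\vec{t}_W=\vec{1}$ is contained in the set where $\vec{s}_W=\vec{1}$, and the set where $\vec{s}_W=\vec{0}$ is contained in the set where $\vec{t}_W=\vec{0}$, and integrating gives the same two inequalities.
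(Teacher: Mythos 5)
Your proposal is correct and follows essentially the same route as the paper: reduce to finite $W$ via lemma~\ref{lem:dominationAndSubfields} and test $Y_W\Stochastically{\ge}Z_W$ against the indicator-type monotone functions of $\Set{\vec{1}}$ and $\Set{\vec{0}}$ (the paper uses $1-\Indicator{\Cylinder{W}{\vec{0}}}$, which differs from your $-\Indicator{\Set{\vec{0}}}$ only by an additive constant). No gaps.
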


\begin{proof}
Assume that $Y\Stochastically{\ge}Z$ and let $W\subseteq V$ be finite. Lemma \ref{lem:dominationAndSubfields} asserts that $Y_W\Stochastically{\ge}Z_W$. Regard the monotone functions $f=\Indicator{\Cylinder{W}{\vec{1}}}$ and $g=1-\Indicator{\Cylinder{W}{\vec{0}}}$. Stochastic domination implies that
\begin{equation*}
	\Proba(Y_W=\vec{1})=\Expect[f(Y)]\ge\Expect[f(Z)]=\Proba(Z_W=\vec{1})
\end{equation*}
and
\begin{equation*}
	\Proba(Y_W=\vec{0})=1-\Expect[g(Y)]\le 1-\Expect[g(Z)]=\Proba(Z_W=\vec{0})\,.
\end{equation*}
\end{proof}

\begin{Prop}\label{prop:propertiesOfDominatedVectors}
Let $Y$ be a BRF taking values in $\Configurations{V}$. Then $\DominatedVectors{Y}$ is closed and a down-set.
\end{Prop}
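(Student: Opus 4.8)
The plan is to prove the two assertions separately: the down-set property by transitivity of $\Stochastically{\ge}$ together with a monotone coupling of product fields, and closedness by reducing to finite index sets via lemma \ref{lem:dominationAndSubfields}.

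\emph{Down-set.} Let $\vec{c}\in\DominatedVectors{Y}$ and $\vec{c}'\le\vec{c}$ in $[0,1]^V$. First I would record the elementary fact that $\BernoulliProductField{\vec{c}}{V}\Stochastically{\ge}\BernoulliProductField{\vec{c}'}{V}$: taking an i.i.d. family $(U_v)_{v\in V}$ of $\mathrm{Uniform}([0,1])$ variables and setting $X_v:=\Indicator{\Set{U_v\le c_v}}$ and $X'_v:=\Indicator{\Set{U_v\le c'_v}}$ gives $X\DistributedAs\BernoulliProductField{\vec{c}}{V}$, $X'\DistributedAs\BernoulliProductField{\vec{c}'}{V}$ and $X'\le X$ surely, so this is a monotone coupling. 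Since $\Stochastically{\ge}$ is transitive — immediate from the defining inequality \eqref{eq:stochasticDomination} — we conclude $Y\Stochastically{\ge}\BernoulliProductField{\vec{c}}{V}\Stochastically{\ge}\BernoulliProductField{\vec{c}'}{V}$, i.e. $\vec{c}'\in\DominatedVectors{Y}$.

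\emph{Closedness.} By lemma \ref{lem:dominationAndSubfields} we have
\[
	\DominatedVectors{Y}
	=\bigcap_{\text{finite }W\subseteq V}
	\Set{\vec{c}\in[0,1]^V:Y_W\Stochastically{\ge}\BernoulliProductField{\vec{c}_W}{W}}\,,
\]
so it suffices to show each set on the right is closed in the product topology on $[0,1]^V$. Fix a finite $W$. On the finite poset $\Configurations{W}$, stochastic domination $Y_W\Stochastically{\ge}\BernoulliProductField{\vec{c}_W}{W}$ is equivalent to the finitely many inequalities $\Proba(Y_W\in U)\ge\BernoulliProductField{\vec{c}_W}{W}(U)$ ranging over up-sets $U\subseteq\Configurations{W}$, because $\MonotoneFunctionsOn{W}$ is, up to additive constants, the nonnegative cone generated by the indicators $\Indicator{U}$. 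For each fixed $U$ the map $\vec{c}_W\mapsto\BernoulliProductField{\vec{c}_W}{W}(U)=\sum_{\vec{s}\in U}\prod_{v\in W}c_v^{s_v}(1-c_v)^{1-s_v}$ is a polynomial, hence continuous, so the corresponding inequality defines a closed subset of $[0,1]^W$; intersecting over the finitely many up-sets keeps it closed. Pulling this set back along the continuous coordinate projection $[0,1]^V\to[0,1]^W$ and then intersecting over all finite $W$ exhibits $\DominatedVectors{Y}$ as an intersection of closed sets, hence closed.

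\emph{Main point of care.} There is no genuine obstacle; the only subtlety is that $V$ may be infinite, which is exactly why closedness is argued through finite windows (lemma \ref{lem:dominationAndSubfields}) rather than directly, while the rest is the standard description of the stochastic order on a finite poset via up-sets plus continuity of $\vec{c}\mapsto\BernoulliProductField{\vec{c}}{V}$. An alternative for closedness would invoke weak-$*$ compactness of the coupling measures on $\Configurations{V}^2$ and the portmanteau theorem applied to the closed event $\Set{(\vec{s},\vec{t}):\vec{s}\ge\vec{t}}$, but the finite-window argument is more elementary.
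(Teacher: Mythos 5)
Your proposal is correct and follows essentially the same route as the paper: closedness is obtained by reducing to finite windows via lemma \ref{lem:dominationAndSubfields}, where domination amounts to finitely many inequalities depending continuously on $\vec{c}$, and the down-set property comes from a monotone coupling of product fields plus transitivity of $\Stochastically{\ge}$. The only cosmetic difference is that you argue the down-set property directly on $V$ rather than on finite marginals first, which changes nothing of substance.
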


\begin{proof}
Take a finite $W\subseteq V$. Then $\DominatedVectors{Y_W}$ is closed because we have a finite number of inequalities over the space of probability measures on $\Configurations{W}$, which is at most $2^{\Cardinality{W}}$-dimensional. If $\vec{c}\in\DominatedVectors{Y_W}$ and $\vec{d}\le\vec{c}$, then $\BernoulliProductField{\vec{d}}{W}\Stochastically{\le}\BernoulliProductField{\vec{c}}{W}\Stochastically{\le} Y$. Therefore $\DominatedVectors{Y_W}$ is a down-set. Those properties then carry over to $\DominatedVectors{Y}$ by taking the limit in the net of finite subsets of $V$.
\end{proof}

\begin{proof}(of \eqref{eq:dominationVWMM})
Take a finite $W\subseteq V$ and $f\in\MonotoneFunctionsOn{W}$. Then
\begin{align*}
	&\FirstAlign\Expect[f(Y_W\land Z_W)]\\
	&=\sum_{\vec{z}\in\Support Z_W}
		\Expect[f(Y_W\land\vec{z})|Z_W=\vec{z}]\Proba(Z_W=\vec{z})\\
	&\le\sum_{\vec{z}\in\Support Z_W}
		\Expect[f(Y_W)|Z_W=\vec{z}]\Proba(Z_W=\vec{z})\\
	&=\Expect[f(Y_W)]\\
	&=\sum_{\vec{z}\in\Support Z_W}
		\Expect[f(Y_W)|Z_W=\vec{z}]\Proba(Z_W=\vec{z})\\
	&\le\sum_{\vec{z}\in\Support Z_W}
		\Expect[f(Y_W\lor\vec{z})|Z_W=\vec{z}]\Proba(Z_W=\vec{z})\\
	&=\Expect[f(Y_W\lor Z_W)]\,.
\end{align*}
Hence $Y_W\land Z_W\Stochastically{\le}Y_W\Stochastically{\le}Y_W\lor Z_W$. For $\vec{x}\in\Configurations{W}$ and $f\in\MonotoneFunctionsOn{W}$ define
\begin{equation*}
	f_{\vec{x}}:\quad
	\Configurations{W}\to\RealNum\qquad
	\vec{y}\mapsto f(\vec{y}\land\vec{x})\,.
\end{equation*}
Then $f_{\vec{x}}\in\MonotoneFunctionsOn{W}$, as
\begin{equation*}
	\vec{y}\le\vec{z}
	\Then \vec{y}\lor\vec{x}\le\vec{z}\lor\vec{x}
	\Then f_{\vec{x}}(\vec{y})
		= f(\vec{y}\lor\vec{x})
	  \le f(\vec{z}\lor\vec{x})
	    = f_{\vec{z}}(\vec{y})\,.
\end{equation*}
We get
\begin{align*}
	&\FirstAlign\Expect[f(Y_W\lor X_W)]\\
	&=\sum_{\vec{x}\in\Configurations{W}}
		\Expect[f(Y_W\land\vec{x})]\Proba(X_W=\vec{x})\\
	&=\sum_{\vec{x}\in\Configurations{W}}
		\Expect[f_{\vec{x}}(Y_W)]\Proba(X_W=\vec{x})\\
	&\ge\sum_{\vec{x}\in\Configurations{W}}
		\Expect[f_{\vec{x}}(Z_W)]\Proba(X_W=\vec{x})
		&\text{as }Y_W\Stochastically{\ge}Z_W\text{ and }f\in\MonotoneFunctionsOn{W}\\
	&=\Expect[f(Z_W\lor X_W)]\,.
\end{align*}
The same derivation holds for $\land$ instead of $\lor$. Note that the fact that $X$ is independent of $(Y,Z)$ is crucial, as we do not know if $Y_W|X=\vec{x}\Stochastically{\ge}Z_W|X=\vec{x}$. Finally \eqref{eq:dominationVWMM} results from applying lemma \ref{lem:dominationAndSubfields}.
\end{proof}

\begin{proof}(of proposition \ref{prop:russoInhomogeneousExtension})
We show that $\nu$ fulfills the conditions of \eqref{eq:dominationAndSubfields}. During this proof we interpret $\IntegerSet{0}$ as $\emptyset$. We define a probability measure $\nu$ on $\Configurations{\NatNum^2}$ inductively by:
\begin{multline*}
  \ForAll n\ge 1,
  \ForAll \vec{s}_{\IntegerSet{n-1}},\vec{t}_{\IntegerSet{n-1}}
          \in\Configurations{\IntegerSet{n-1}},
  \ForAll a,b\in\Set{0,1}:\\
  \nu(\Cylinder{\Set{n}}{a}\times\Cylinder{\Set{n}}{b}\,|\,
      \Cylinder{\IntegerSet{n-1}}{\vec{s}_{\IntegerSet{n-1}}}\times
      \Cylinder{\IntegerSet{n-1}}{\vec{t}_{\IntegerSet{n-1}}})\\
  :=\begin{cases}
    =\Proba(Z_{n}=1|Z_{\IntegerSet{n-1}}=\vec{s}_{\IntegerSet{n-1}})
      &\text{if }(a,b)=(1,1)\\
    = 0
      &\text{if }(a,b)=(1,0)\\
    = p_n - \Proba(Z_{n}=1|Z_{\IntegerSet{n-1}}=\vec{s}_{\IntegerSet{n-1}})
      &\text{if }(a,b)=(0,1)\\
    = 1 - p_n
      &\text{if }(a,b)=(0,0)\,.
\end{cases}
\end{multline*}
A straightforward induction over $n$ shows that $\nu$ is a probability measure. The induction base is
\begin{equation*}
	\sum_{s_1,t_1}\nu(\Cylinder{\Set{1}}{s_1}\times\Cylinder{\Set{1}}{t_1})
	=(1-p_1)+(p_1-\Proba(Z_1=1))+0+\Proba(Z_1=1)=1\,.
\end{equation*}
The induction step is
\begin{align*}
	&\FirstAlign\sum_{\vec{s}_{\IntegerSet{n}},\vec{t}_{\IntegerSet{n}}}
		\nu(\Cylinder{\IntegerSet{n}}{\vec{s}_{\IntegerSet{n}}}
			\times
			\Cylinder{\IntegerSet{n}}{\vec{t}_{\IntegerSet{n}}})\\
	&=\sum_{\vec{s}_{\IntegerSet{n-1}},\vec{t}_{\IntegerSet{n-1}}}
		\nu(\Cylinder{\IntegerSet{n}}{\vec{s}_{\IntegerSet{n}}}
			\times
			\Cylinder{\IntegerSet{n}}{\vec{t}_{\IntegerSet{n}}})\\
	&\times\underbrace{\left(\sum_{s_n,t_n}
		\nu(\Cylinder{\Set{n}}{s_n}\times\Cylinder{\Set{n}}{t_n}\,|\,
		\Cylinder{\IntegerSet{n-1}}{\vec{s}_{\IntegerSet{n-1}}}\times
		\Cylinder{\IntegerSet{n-1}}{\vec{t}_{\IntegerSet{n-1}}})
		\right)}_{=1\text{ by definition of }\nu}\\
	&=\underbrace{\sum_{\vec{s}_{\IntegerSet{n-1}},\vec{t}_{\IntegerSet{n-1}}}
		\nu(\Cylinder{\IntegerSet{n}}{\vec{s}_{\IntegerSet{n}}}
			\times
			\Cylinder{\IntegerSet{n}}{\vec{t}_{\IntegerSet{n}}})}_{=1\text{ by induction}}\,.
\end{align*}
Next we calculate its marginals. Let $n\ge 1$ and $\vec{s}_{\IntegerSet{n}}\in\Configurations{\IntegerSet{n}}$. Then we have
\begin{align*}
  &\FirstAlign \nu(\Cylinder{\IntegerSet{n}}{\vec{s}_{\IntegerSet{n}}}
             \times\Configurations{\NatNum})\\
  &= \prod_{i=1}^n
       \nu(\Cylinder{\Set{i}}{\vec{s}_i}\times\Configurations{\NatNum}\,|\,
           \Cylinder{\IntegerSet{i-1}}{\vec{s}_{\IntegerSet{i-1}}}
           \times\Configurations{\NatNum})\\
  &= \prod_{i=1}^n
       \Proba(Z_{i}=s_i|Z_{\IntegerSet{i-1}}=\vec{s}_{\IntegerSet{i-1}})\\
  &= \Proba(Z_{\IntegerSet{n}}=\vec{s}_{\IntegerSet{n}})
\end{align*}
and
\begin{align*}
  &\FirstAlign
     \nu(\Configurations{\NatNum}
         \times\Cylinder{\IntegerSet{n}}{\vec{s}_{\IntegerSet{n}}})\\
  &= \prod_{i=1}^n
     \nu(\Configurations{\NatNum}\times\Cylinder{\Set{i}}{s_i}\,|\,
		 \Configurations{\NatNum}\times
         \Cylinder{\IntegerSet{i-1}}{\vec{s}_{\IntegerSet{i-1}}})\\
	&= \prod_{i=1}^n\left[
		(1-p_i)\Indicator{\Set{0}}(s_i)+ p_i\Indicator{\Set{1}}(s_i)
		\right]\\
	&= \Proba(X_{\IntegerSet{n}}=\vec{s}_{\IntegerSet{n}})\,.
\end{align*}
Hence the marginal of the first coordinate has the same law as $Z$ and the marginal of the second coordinate has the law $\BernoulliProductField{\vec{p}}{\NatNum}$.\\

Finally we calculate \eqref{eq:dominationAboveDiagonal} for $\nu$. We proceed by induction over $n$. The induction base is
\begin{equation*}
	\nu(\Set{(\vec{s},\vec{t})\in\Configurations{V}^2: \vec{s}_1\ge\vec{t}_1})
	=\nu(\Set{(\vec{s},\vec{t})\in\Configurations{V}^2: \vec{s}_1=0<\vec{t}_1=1})
	=0\,.
\end{equation*}
The induction step is
\begin{align*}
	&\FirstAlign \nu(\Set{(\vec{s},\vec{t})\in\Configurations{V}^2:
		\vec{s}_{\IntegerSet{n}}\ge\vec{t}_{\IntegerSet{n}}})\\
	&= \underbrace{\nu(\Set{(\vec{s},\vec{t})\in\Configurations{V}^2:
		\vec{s}_{\IntegerSet{n-1}}\ge\vec{t}_{\IntegerSet{n-1}}})
		}_{=1\text{ by induction}}\\
	&\times\left(
		1-\underbrace{
			\nu(\Set{(\vec{s},\vec{t})\in\Configurations{V}^2:
				\vec{s}_n=0<\vec{t}_n=1\,|\,
				\vec{s}_{\IntegerSet{n-1}}\ge\vec{t}_{\IntegerSet{n-1}}}
			)
		}_{=0\text{ by definition of }\nu}
		\right)\\
	&= 1\,.
\end{align*}
Hence
\begin{equation*}
	\ForAll n\in\NatNum:\quad
	\nu(\Set{(\vec{s},\vec{t})\in\Configurations{V}^2:
		\vec{s}_{\IntegerSet{n}}\not\ge\vec{t}_{\IntegerSet{n}}})=0\,.
\end{equation*}
This implies that
\begin{equation*}
	\nu(\Set{(\vec{s},\vec{t})\in\Configurations{V}^2:
		\vec{s}\not\ge\vec{t}})=0\,.
\end{equation*}
\end{proof}
\subsection{A summary of the homogeneous case}
\label{sec:homogeneousCase}
In the homogeneous case each of the sets defined in \eqref{eq:baseDefinitionsInhomogeneous}, after being identified with the respective cross-sections, reduces to a one-dimensional interval described by its non-trivial endpoint. The \emph{dominated Bernoulli parameter value} (short: dominated value) of a BPF $Y$ is
\begin{subequations}\label{eq:baseDefinitionsHomogeneous}
\begin{equation}\label{eq:dominatedBernoulliValue}
	\DominatedValue{Y}
	:=\max\Set{c: Y\Stochastically{\ge}\BernoulliProductField{c}{V}}\,.
\end{equation}
\begin{NoteToSelf}
For finite $W$ we have $\DominatedValue{Y_W}=\max\Set{c: Y_W\Stochastically{\ge}\BernoulliProductField{c}{W}}$. As $[0,\DominatedValue{Y}]=\bigcap_{\text{finite }W\subseteq V} [0,\DominatedValue{Y_W}]$ the value $\DominatedValue{Y}$ is contained in all of them and a $\max$.
\end{NoteToSelf}

For a non-empty class $C$ of BRFs this extends to
\begin{equation}\label{eq:dominatedClassValue}
	\DominatedValue{C}
	:= \inf\Set{\DominatedValue{Y}: Y\in C}\,.
\end{equation}
\begin{NoteToSelf}
Whereas in \eqref{eq:dominatedClassValue} I have an $\inf$ instead of a $\min$, because I can not guarantee that this value is attained for some BRF $Y$.
\end{NoteToSelf}

The \emph{critical domination values} of a class $C$, assuming that $C(p)$ is non-empty for all $p$, are written as
\begin{equation}\label{eq:guaranteValueNonuniform}
	\PDomination{C}
	:= \inf\Set{p\in[0,1]: \ForAll Y\in C(p): \DominatedValue{Y}>0}
\end{equation}
and
\begin{equation}\label{eq:guaranteValueUniform}
	\PUniformDomination{C}
	:= \inf\Set{p\in[0,1]: \DominatedValue{C(p)}>0}\,.
\end{equation}
As the function $p\mapsto\DominatedValue{C(p)}$ is non-decreasing \eqref{eq:dominatedParameterMonotonicity} the sets $]\PDomination{C},1]$ and $]\PUniformDomination{C},1]$ are up-sets and we have the inequality
\begin{equation}\label{eq:guaranteValueInequality}
	\PDomination{C}\le\PUniformDomination{C}\,.
\end{equation}
\end{subequations}

The first known result is a bound on $\PUniformDomination{\DependencyClassWeak{G}}$ in the homogeneous case, only depending on the maximal degree of $G$:

\begin{Thm}[{\cite[theorem 1.3]{LiggettSchonmannStacey_domination}}]
\label{thm:LSSDominationUpperBound}
If $G$ has uniformly bounded degree by a constant $D$, then
\begin{subequations}
\begin{equation}\label{eq:pDominationUpperBound}
	\PUniformDomination{\DependencyClassWeak{G}}\le 1-\PowerFracDual{(D-1)}{D}
\end{equation}
and for $p\ge 1-\PowerFracDual{(D-1)}{D}$ the dominated parameter is uniformly minorated:
\begin{equation}\label{eq:dominatedValueLowerBound}
	\DominatedValue{\DependencyClassWeak{G}(p)}\ge
	\left(1-\left(\frac{q}{(D-1)^{(D-1)}}\right)^{1/D}\right)
	\left(1-\left(q(D-1)\right)^{1/D}\right)\,.
\end{equation}
Additionally
\begin{equation}\label{eq:dominatedValueConvergenceToOne}
\lim_{p\to 1} \DominatedValue{\DependencyClassWeak{G}(p)} = 1\,.
\end{equation}
\end{subequations}
\end{Thm}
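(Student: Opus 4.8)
The statement splits into three parts, the first of which is essentially free given the machinery already assembled. By \eqref{eq:guaranteValueUniform}, $\PUniformDomination{\DependencyClassWeak{G}}=\inf\Set{p:\DominatedValue{\DependencyClassWeak{G}(p)}>0}$, and $\DominatedValue{\DependencyClassWeak{G}(p)}>0$ is equivalent to $p\vec 1$ lying in $\PUniformDominationSet{\DependencyClassWeak{G}}$, which by Theorem~\ref{thm:shearerDominationEquivalenceInhomogeneous} equals $\PShearerSetInterior{G}$; the homogeneous cross-section of the latter has left endpoint $\PShearer{G}$, so $\PUniformDomination{\DependencyClassWeak{G}}=\PShearer{G}$. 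Theorem~\ref{thm:LLLHomogeneousEscaping} then bounds $\PShearer{G}\le 1-\PowerFracDual{(D-1)}{D}$ whenever $G$ has degree at most $D$, which is \eqref{eq:pDominationUpperBound}.

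For the quantitative bound \eqref{eq:dominatedValueLowerBound} I would run the Liggett--Schonmann--Stacey argument in the form suggested by Proposition~\ref{prop:dominationConditionalMinoration}, but with the auxiliary field tuned to the degree rather than to $\vec q$. Fix $p\ge 1-\PowerFracDual{(D-1)}{D}$, equivalently $q\le(D-1)^{D-1}/D^D$, and let $Y\in\DependencyClassWeak{G}(p)$. Introduce an independent \NameBernoulli{} product field $X$ with $\Proba(X_v=0)=r$, for a parameter $r\in(0,1)$ to be chosen, and set $Z:=Y\land X$; by \eqref{eq:dominationVWMM_Chain} it suffices to minorate $Z$, and by Lemma~\ref{lem:dominationAndSubfields} it suffices to do so on each finite $W\subseteq V$. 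List $W=\Set{v_1,\dotsc,v_n}$ in an arbitrary order and put $W_i:=\Set{v_1,\dotsc,v_{i-1}}$. The core step is a \emph{constant} minoration
\[
	\Proba(Z_{v_i}=1\mid Z_{W_i}=\vec s_{W_i})\ \ge\ (1-r)\,\alpha
	\qquad\text{for all }i\text{ and all }\vec s_{W_i}\in\Configurations{W_i},
\]
where $\alpha=\alpha(q,r,D)\in(0,1]$ is the constant produced by an induction on $\Cardinality{W_i}$ modelled on the expansion \eqref{eq:dominationConditionalMinorationProof}: one conditions on the states of the at most $D$ neighbours of $v_i$ lying in $W_i$, charges their $0$'s to the independent field $X$ (this is where the factors $r$ and the leading $(1-r)$ come from), applies the weak-dependency bound $\Proba(Y_{v_i}=1\mid\dotsb)\ge p$, and substitutes the induction hypothesis into the conditional probabilities of the neighbours resolved to $1$. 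Collecting terms with the fundamental identity \eqref{eq:shearerCriticalFunctionFundamentalIdentity} — now only the bound $D$ on the degree enters, not the explicit structure of $G$ — yields a self-consistent inequality for $\alpha$; it is solvable with $\alpha$ bounded away from $0$ exactly when $q\le\max_{\beta}\beta^{D-1}(1-\beta)=(D-1)^{D-1}/D^D$, and optimising over $r$ gives $r=(q/(D-1)^{D-1})^{1/D}$ and $\alpha=1-(q(D-1))^{1/D}$. Feeding the displayed minoration into Proposition~\ref{prop:russoInhomogeneousExtension} gives $Z_W\Stochastically{\ge}\BernoulliProductField{(1-r)\alpha}{W}$, hence $Y_W\Stochastically{\ge}\BernoulliProductField{(1-r)\alpha}{W}$ by \eqref{eq:dominationVWMM_Chain}, and since $W$ is arbitrary $Y\Stochastically{\ge}\BernoulliProductField{(1-r)\alpha}{V}$. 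As this holds for every $Y\in\DependencyClassWeak{G}(p)$ we obtain $\DominatedValue{\DependencyClassWeak{G}(p)}\ge(1-r)\alpha$, which is \eqref{eq:dominatedValueLowerBound}.

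Finally \eqref{eq:dominatedValueConvergenceToOne} follows by letting $p\to 1$: then $q\to 0$, and for $p$ close enough to $1$ the right-hand side of \eqref{eq:dominatedValueLowerBound} is defined and converges to $(1-0)(1-0)=1$, while $\DominatedValue{\DependencyClassWeak{G}(p)}\le p\le 1$ always.

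The main obstacle is the quantitative induction behind \eqref{eq:dominatedValueLowerBound}: writing the conditional-probability recursion in the right form and carrying out the optimisation over $r$. The delicate point — and the reason this argument produces a bound uniform in $\Cardinality{V}$, unlike Proposition~\ref{prop:shearerImpliesUniformDominationFinite} — is that the extra $r$-perturbation rescues vertices all of whose neighbours have already been revealed, for which the bare one-vertex open extension probabilities $\ShearerOVOEP{W}{v}(\vec p)$ degenerate; one has to verify that, even for such vertices, charging the neighbouring zeros to $X$ keeps the recursion self-consistent, and that the degree $D$ is the only feature of $G$ surviving into the final constant.
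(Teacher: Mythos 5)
Your reduction of \eqref{eq:pDominationUpperBound} to Theorem~\ref{thm:shearerDominationEquivalenceInhomogeneous} plus Theorem~\ref{thm:LLLHomogeneousEscaping} is fine and is exactly how this paper's framework recovers that part (cf.\ Theorem~\ref{thm:shearerDominationEquivalenceHomogeneous}; for the passage from a dominated vector $\vec c\StrictlyGreater\vec 0$ to a homogeneous positive constant you should invoke the explicit form in Theorem~\ref{thm:uniformlyDominatedVector}). The problem is the quantitative core \eqref{eq:dominatedValueLowerBound}, on which \eqref{eq:dominatedValueConvergenceToOne} also rests (the paper's own uniform bound $q_v\min\Set{q_w}$ \emph{decreases} in $p$, so it cannot substitute here). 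Note also that the paper does not reprove this statement at all: it imports it from \NameLiggettSchonmannStacey{}, so your sketch has to actually close their induction, and it does not.

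The gap is in the asserted self-consistency. The recursion you describe — reveal $W_i$ in an arbitrary order, charge each neighbour revealed as $0$ to $X$ at cost $r$, charge each neighbour revealed as $1$ the inductive constant $\alpha$, and bound $\Proba(Y_{v}=0\mid Z_M=\vec s_M)\le q$ by the weak-dependency property — closes only if $q\le(1-\alpha)\,r^{l}\alpha^{m}$ for all $l+m\le D$, i.e.\ $q\le(1-\alpha)\min(r,\alpha)^{D}$, because nothing prevents all $D$ neighbours of $v_i$ from lying in $W_i$. The supremum of $(1-\alpha)\min(r,\alpha)^{D}$ over $(r,\alpha)$ is $\PowerFracDual{D}{(D+1)}$, which is strictly smaller than $\PowerFracDual{(D-1)}{D}$; so this recursion is \emph{not} ``solvable exactly when $q\le (D-1)^{D-1}/D^{D}$'', and your claimed optimisers fail it outright: for $D=2$, $q=1/4$, $r=\sqrt q=1/2$, $\alpha=1-\sqrt q=1/2$, the step in which both neighbours of $v$ are revealed as $0$ only yields $\Proba(Y_v=0\mid Z_{W_i}=\vec s_{W_i})\le q/r^{2}=1$, nowhere near the required $1-\alpha=1/2$. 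The exponent $D-1$ in \eqref{eq:dominatedValueLowerBound} cannot emerge from a single uniform $\alpha$ applied to up to $D$ revealed neighbours; it requires the finer structure of LSS's Proposition~1.2, in which the inductive bound fed into the factors for neighbours resolved to $1$ is not the same worst-case constant (those pairs still have the unrevealed vertex $v$ as a neighbour — the phenomenon this paper isolates as ``escaping'', cf.\ the proof of Theorem~\ref{thm:LLLHomogeneousEscaping}, where the bound $1-1/D$ is established only for escaping pairs, and Proposition~\ref{prop:dominationConditionalMinoration}, whose bound $q_v\ShearerOVOEP{W}{v}(\vec p)$ is deliberately non-uniform for this reason). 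Your sketch never writes down the actual coupled conditions on $(\alpha,r)$, and the optimisation you report does not verify against any closure; as it stands the central induction does not exist, so \eqref{eq:dominatedValueLowerBound} and hence \eqref{eq:dominatedValueConvergenceToOne} are unproven.
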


Recall that for $k\in\NatNumZero$ the \emph{$k$-fuzz of $G=(V,E)$} is the graph with vertices $V$ and an edge for every pair of vertices at distance less than or equal to $k$ in $G$. Denote the $k$-fuzz of $\IntNum$ by $\FuzzKZ$. Note that $\FuzzKZ$ is $2k$-regular. As $\FuzzKZ$ has a natural order inherited from $\IntNum$ theorem \ref{thm:LSSDominationUpperBound} can be improved considerably:

\begin{Thm}[{\cite[theorems 0.0, 1.5 and corollary 2.2]{LiggettSchonmannStacey_domination}}]
\label{thm:LSSDominationKFuzzZ}
On $\FuzzKZ$ we have
\begin{subequations}
\begin{equation}\label{eq:pDominationKFuzzZ}
	\PDomination{\DependencyClassWeak{\FuzzKZ}}
	=\PUniformDomination{\DependencyClassWeak{\FuzzKZ}}
	=\PDomination{\DependencyClassStrong{\FuzzKZ}}
	=\PUniformDomination{\DependencyClassStrong{\FuzzKZ}}
	=1-\PowerFracDual{k}{(k+1)}\,.
\end{equation}
For $p\ge\PUniformDomination{\DependencyClassStrong{\FuzzKZ}}$ the dominated parameter is minorated by
\begin{equation}\label{eq:dominatedValueLowerBoundKFuzzZ}
	\DominatedValue{\DependencyClassWeak{\FuzzKZ}(p)}\ge
	\left(1-\left(\frac{q}{k^k}\right)^{\frac{1}{k+1}}\right)
	\left(1-\left(qk\right)^{\frac{1}{k+1}}\right)\,.
\end{equation}
This implies a jump of $\DominatedValue{\DependencyClassWeak{\FuzzKZ}(.)}$ at the critical value $\PUniformDomination{\DependencyClassWeak{\FuzzKZ}}$, namely
\begin{equation}\label{eq:dominatedValueJumpKFuzzZ}
	\ForAll k\in\NatNumZero:\quad\frac{k}{(k+1)^2}
	\le\DominatedValue{
		\DependencyClassWeak{\FuzzKZ}
		(\PUniformDomination{\DependencyClassWeak{\FuzzKZ}})
	}\,.
\end{equation}
\end{subequations}
\end{Thm}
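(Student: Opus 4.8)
The plan is to read the statement off from Theorem~\ref{thm:shearerDominationEquivalenceInhomogeneous}, the known value of $\PShearer{\FuzzKZ}$, and the order-adapted lower bound of \NameLiggettSchonmannStacey{} \cite[corollary~1.4]{LiggettSchonmannStacey_domination}. \emph{The identities \eqref{eq:pDominationKFuzzZ}:} passing to the one-dimensional cross-sections of section~\ref{sec:homogeneousCase}, Theorem~\ref{thm:shearerDominationEquivalenceInhomogeneous} collapses $\PDominationSet{\DependencyClassWeak{\FuzzKZ}}$, $\PUniformDominationSet{\DependencyClassWeak{\FuzzKZ}}$, $\PDominationSet{\DependencyClassStrong{\FuzzKZ}}$ and $\PUniformDominationSet{\DependencyClassStrong{\FuzzKZ}}$ onto the single interval $\PShearerSetInterior{\FuzzKZ}$, so all four critical domination values coincide with $\PShearer{\FuzzKZ}$. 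It then remains to identify $\PShearer{\FuzzKZ}=1-\PowerFracDual{k}{(k+1)}$, the value already recorded after Theorem~\ref{thm:LLLHomogeneousEscaping}: existence of \NameShearersMeasure{} for $q\le\PowerFracDual{k}{(k+1)}$ is witnessed by the explicit $(k+1)$-factor realisation \cite{MathieuTemmel_kindependent}, the complementary non-existence for larger $q$ is \cite{Shearer_problem} and \cite[corollary~2.2]{LiggettSchonmannStacey_domination}. Concretely, since every finite subgraph of $\FuzzKZ$ factorises over integer intervals and, by a routine fill-in argument, its critical function is bounded below by that of the surrounding interval, the sign of $\ShearerCriticalFunction{H}$ over all finite $H$ is governed by the single sequence $\phi_n$ ($=\ShearerCriticalFunction{}$ of $\FuzzKZ$ restricted to $\Set{1,\dots,n}$), which by \eqref{eq:shearerCriticalFunctionFundamentalIdentity} satisfies $\phi_n=\phi_{n-1}-q\,\phi_{n-k-1}$ with $\phi_j=1$ for $j\le 0$; its threshold is the double root of $x^{k+1}=x^k-q$, located at $x_0=\frac{k}{k+1}$, which gives $q_c=\PowerFracDual{k}{(k+1)}$.

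\emph{The lower bound \eqref{eq:dominatedValueLowerBoundKFuzzZ}:} fix $p\ge 1-q_c$, so $p\vec{1}\in\PShearerSetInterior{\FuzzKZ}$, take an arbitrary $Y\in\DependencyClassWeak{\FuzzKZ}(p)$, let $X$ be an independent Bernoulli product field of parameter $c$, and set $Z:=X\land Y$. Order $\IntNum$ naturally and run the conditional-minorisation scheme of section~\ref{sec:domination} — of which proposition~\ref{prop:dominationConditionalMinoration} is the conservative $c=q$ instance — keeping $c$ free: conditioning a vertex $v$ on a finite window $W$ to its left gives $\Proba(Z_v=1\mid Z_W=\vec{s}_W)\ge c\,\ShearerOVOEP{W}{v}(p\vec{1})$, and by proposition~\ref{prop:shearerOVOEPDecreasing} these one-sided escaping one-vertex open extension probabilities are uniformly minorated by their left-infinite limit. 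The gain over the generic bound of proposition~\ref{prop:shearerImpliesUniformDominationInfinite} is that in this fixed order each step exposes only the $k$ neighbours on one side of $v$, so the recursion \eqref{eq:shearerOVOEPFundamentalIdentity} governing the extension probabilities is ``$(k+1)$-local'' rather than $2k$-local; tuning $c$ and the extension lower bound by the matching \NameLovaszLocalLemma{}-type optimisation with branching $k+1$ (this is \eqref{eq:dominatedValueLowerBound} of Theorem~\ref{thm:LSSDominationUpperBound} with $D$ replaced by $k+1$) yields $c=1-(q/k^k)^{1/(k+1)}$ and extension bound $1-(qk)^{1/(k+1)}$, hence a uniform conditional minorisation of $\Proba(Z_v=1\mid Z_{\text{below }v})$ by the product $\sigma$ appearing in \eqref{eq:dominatedValueLowerBoundKFuzzZ}. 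Proposition~\ref{prop:russoInhomogeneousExtension} on each finite interval together with lemma~\ref{lem:dominationAndSubfields} then promotes this to $\BernoulliProductField{\sigma}{\IntNum}\Stochastically{\le}Z\Stochastically{\le}Y$; since $Y$ was arbitrary, $\DominatedValue{\DependencyClassWeak{\FuzzKZ}(p)}\ge\sigma$.

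\emph{The jump \eqref{eq:dominatedValueJumpKFuzzZ}:} by \eqref{eq:pDominationKFuzzZ} one has $\PUniformDomination{\DependencyClassWeak{\FuzzKZ}}=1-q_c$, and \eqref{eq:dominatedValueLowerBoundKFuzzZ} holds for every $p\ge 1-q_c$; substituting $q=q_c=\PowerFracDual{k}{(k+1)}$ gives $\sigma=\bigl(1-\frac{1}{k+1}\bigr)\bigl(1-\frac{k}{k+1}\bigr)=\frac{k}{(k+1)^2}$. This substitution is legitimate because $1-q_c$ itself lies in $\PShearerSetInterior{\FuzzKZ}$: at $q=q_c$ the recursion has $\phi_n\sim(An+B)x_0^n$ with $A>0$, so $\phi_n>0$ for all $n$, hence every admissible $\ShearerOVOEP{W}{v}(q_c\vec{1})$ is defined and positive and proposition~\ref{prop:dominationConditionalMinoration} applies at $p=1-q_c$. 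Combined with $\DominatedValue{\DependencyClassWeak{\FuzzKZ}(p)}=0$ for $p<1-q_c$ — immediate from $p\notin\PShearerSetInterior{\FuzzKZ}=\PDominationSet{\DependencyClassWeak{\FuzzKZ}}$ by Theorem~\ref{thm:shearerDominationEquivalenceInhomogeneous} — this displays the jump of $\DominatedValue{\DependencyClassWeak{\FuzzKZ}(\cdot)}$ from $0$ to at least $\frac{k}{(k+1)^2}$ at the critical value.

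The substantive work is the sharp constant in \eqref{eq:dominatedValueLowerBoundKFuzzZ}: one must exploit the one-sidedness of $\FuzzKZ$ under a fixed total order to shrink the relevant local combinatorics from degree $2k$ to branching $k+1$, and then carry out the two-parameter \NameLovaszLocalLemma{}-optimisation linking the perturbation strength $c$ to the extension lower bound so that the two factors of \eqref{eq:dominatedValueLowerBoundKFuzzZ} interlock — essentially the optimal-bound computation alluded to after figure~\ref{fig:structureOfInclusionsInShearerDominationEquivalenceInhomogeneous}, here specialised to $\FuzzKZ$. A secondary but genuine subtlety is the boundary analysis $1-q_c\in\PShearerSetInterior{\FuzzKZ}$, via the double-root asymptotics of $\phi_n$, which is exactly what makes the jump occur \emph{at} the critical value rather than merely being approached from the right.
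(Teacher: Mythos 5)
This statement is not proved in the paper at all: it is imported verbatim from \NameLiggettSchonmannStacey{} as background in section~\ref{sec:homogeneousCase} (the theorem header carries their citation, and the text immediately after explicitly recounts how \emph{they} obtained those quantities). There is therefore no paper proof for your attempt to be compared against; the theorem's role here is to motivate and contextualise theorem~\ref{thm:shearerDominationEquivalenceInhomogeneous}, not to be rederived from it.

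As a blind reconstruction, your argument has a genuine gap in the proof of \eqref{eq:dominatedValueLowerBoundKFuzzZ}. You assert a ``keep $c$ free'' variant of proposition~\ref{prop:dominationConditionalMinoration}, namely $\Proba(Z_v=1\mid Z_W=\vec{s}_W)\ge c\,\ShearerOVOEP{W}{v}(p\vec 1)$ for an arbitrary perturbation parameter $c$; but that proposition's proof is tied to the choice $c=q$. The factor $\Proba(X_{N_0}=\vec 0)=\prod p_{u_j}$ appearing in \eqref{eq:dominationConditionalMinorationProofYIndependence} is exactly what is matched against the bound $\ShearerOVOEP{\cdot}{u_j}\le p_{u_j}$ of \eqref{eq:shearerOVOEPBoundsAbove} to close inequality~\eqref{eq:LSSExtensionDisturbanceFundamentalInequality}. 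For $c>q$ one gets $(1-c)^{\Cardinality{N_0}}<\prod p_{u_j}$ and the step fails; for $c\le q$ it holds but then the best uniform product is $c\cdot q_w\le q^2$, i.e.\ precisely the paper's own proposition~\ref{prop:shearerImpliesUniformDominationInfinite}, which at $q=\PowerFracDual{k}{(k+1)}$ gives $\bigl(\PowerFracDual{k}{(k+1)}\bigr)^2$ — strictly smaller than $\frac{k}{(k+1)^2}$ already for $k=1$ ($1/16$ versus $1/4$). So the scheme you describe cannot, by tuning $c$, reach \eqref{eq:dominatedValueLowerBoundKFuzzZ}. What you actually invoke — ``\eqref{eq:dominatedValueLowerBound} with $D$ replaced by $k+1$'' — is theorem~\ref{thm:LSSDominationUpperBound}, itself only cited; the real content of LSS's theorem~1.5 is the two-parameter $(\alpha,r)$ optimisation of their proposition~1.2 over the left-to-right order on $\FuzzKZ$, and you gesture at it without carrying it out. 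Your reduction of \eqref{eq:pDominationKFuzzZ} to $\PShearer{\FuzzKZ}$ via theorem~\ref{thm:shearerDominationEquivalenceInhomogeneous}, and your endpoint analysis through the double root of $x^{k+1}=x^k-q$, are sound, but the former is anachronistic (the paper's theorem postdates LSS) and neither substitutes for the missing optimisation that produces the sharp constant.
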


To arrive at the equality in \eqref{eq:pDominationKFuzzZ} Liggett, Schonmann \& Stacey derived a lower bound from a particular probability measure, called \NameShearersMeasure{} (see section \ref{sec:shearerPrimer}). Furthermore it allowed them to show that
\begin{equation}
	\ForAll k\in\NatNumZero:\quad
	\DominatedValue{
		\DependencyClassStrong{\FuzzKZ}
		(\PUniformDomination{\DependencyClassStrong{\FuzzKZ}})
	}
	\le\frac{k}{k+1}\,.
\end{equation}

Thus our main result can be written as a corollary of theorems \ref{thm:shearerDominationEquivalenceInhomogeneous} and \ref{thm:uniformlyDominatedVector}:

\begin{Thm}\label{thm:shearerDominationEquivalenceHomogeneous}
Let $G$ be a locally finite and connected graph. Then
\begin{subequations}\label{eq:shearerDominationEquivalenceHomogeneous}
\begin{equation}\label{eq:shearerPEqualsDominationP}
	\PDomination{\DependencyClassWeak{G}}
	=\PUniformDomination{\DependencyClassWeak{G}}
	=\PDomination{\DependencyClassStrong{G}}
	=\PUniformDomination{\DependencyClassStrong{G}}
	=\PShearer{G}\,.
\end{equation}
If $G$ contains at least one infinite connected component and has uniformly bounded degree, then
\begin{equation}\label{eq:dominatedValueJumpInfinite}
	\DominatedValue{
		\DependencyClassWeak{G}
		(\PUniformDomination{\DependencyClassWeak{G}})
	}
	\ge (\QUniformDomination{\DependencyClassWeak{G}})^2
	> 0\,,
\end{equation}
whereas if $G$ is finite we have
\begin{equation}\label{eq:dominatedValueJumpFinite}
	\DominatedValue{
		\DependencyClassWeak{G}
		(\PUniformDomination{\DependencyClassWeak{G}})
	}
	=0\,.
\end{equation}
\end{subequations}
\end{Thm}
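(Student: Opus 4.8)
The plan is to read off all three displays by specialising Theorems~\ref{thm:shearerDominationEquivalenceInhomogeneous} and~\ref{thm:uniformlyDominatedVector} and Proposition~\ref{prop:nondomination} to the diagonal $\Set{p\vec{1}:p\in[0,1]}$, using throughout that $G$ is connected. For~\eqref{eq:shearerPEqualsDominationP} I would argue a two-sided bound. If $p>\PShearer{G}$ then $p\vec{1}\in\PShearerSetInterior{G}$, and since $G$ is connected the vector furnished by Theorem~\ref{thm:uniformlyDominatedVector} is \emph{homogeneous}, say $c\vec{1}$ with $c>0$; hence $\DominatedValue{\DependencyClassWeak{G}(p)}\ge c>0$ and $\PUniformDomination{\DependencyClassWeak{G}}\le\PShearer{G}$. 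Conversely, if $p<\PShearer{G}$ then $p\vec{1}\notin\PShearerSetInterior{G}$, so Proposition~\ref{prop:nondomination} yields a $Y\in\DependencyClassStrong{G}(p)$ with no strictly positive dominated vector, i.e.\ $\DominatedValue{Y}=0$; hence $\PDomination{\DependencyClassStrong{G}}\ge\PShearer{G}$. Since $\DependencyClassStrong{G}(p)\subseteq\DependencyClassWeak{G}(p)$ and $\PDomination{C}\le\PUniformDomination{C}$ by~\eqref{eq:guaranteValueInequality}, the four critical values are squeezed between $\PDomination{\DependencyClassStrong{G}}$ and $\PUniformDomination{\DependencyClassWeak{G}}$, so all equal $\PShearer{G}$. (Equivalently: the four $\mathcal{P}$-sets coincide with $\PShearerSetInterior{G}$ by Theorem~\ref{thm:shearerDominationEquivalenceInhomogeneous}, and the diagonal cross-section of $\PShearerSetInterior{G}$ is the interval with non-trivial endpoint $\PShearer{G}$.)

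For~\eqref{eq:dominatedValueJumpInfinite}, uniformly bounded degree together with Theorem~\ref{thm:LLLHomogeneousEscaping} gives $\PShearer{G}<1$, so $\QUniformDomination{\DependencyClassWeak{G}}=\QShearer{G}>0$ by~\eqref{eq:shearerPEqualsDominationP}. The crucial point is that for an infinite connected $G$ the critical homogeneous parameter already lies in the interior, $\PShearer{G}\vec{1}\in\PShearerSetInterior{G}$: the supremum over finite subgraphs defining $\PShearer{G}$ in~\eqref{eq:shearerParametersInfinite} is not attained, so $\ShearerCriticalFunction{H}(\PShearer{G}\vec{1})>0$ for every finite subgraph $H$ of $G$ (cf.\ \cite[theorem 8.1]{ScottSokal_repulsive}). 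Hence Theorem~\ref{thm:uniformlyDominatedVector} is applicable at $\vec{p}=\PShearer{G}\vec{1}$; with $V_v=V$ infinite and $\PShearer{G}<1$, case~\eqref{eq:uniformlyDominatedVectorInfinite} gives the homogeneous dominated vector $c_v=\QShearer{G}\min\Set{\QShearer{G}:w\in\Neighbours{v}}=\QShearer{G}^{2}$. Therefore $\DominatedValue{\DependencyClassWeak{G}(\PUniformDomination{\DependencyClassWeak{G}})}\ge\QShearer{G}^{2}=(\QUniformDomination{\DependencyClassWeak{G}})^{2}>0$.

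For~\eqref{eq:dominatedValueJumpFinite}, when $G$ is finite the point $\PShearer{G}\vec{1}$ is a boundary point of $\PShearerSet{G}$, so $\PShearer{G}\vec{1}\notin\PShearerSetInterior{G}$; Lemma~\ref{lem:zeroProbabilityBelow} then produces a $Y\in\DependencyClassStrong{G}(\PShearer{G}\vec{1})\subseteq\DependencyClassWeak{G}(\PShearer{G}\vec{1})$ with $\Proba(Y_V=\vec{1})=0$, and testing $Y\Stochastically{\ge}\BernoulliProductField{c}{V}$ against $f:=\Indicator{\Set{\vec{1}}}\in\MonotoneFunctionsOn{V}$ forces $c^{\Cardinality{V}}\le\Proba(Y_V=\vec{1})=0$, i.e.\ $c=0$, so $\DominatedValue{Y}=0$ and, using $\PUniformDomination{\DependencyClassWeak{G}}=\PShearer{G}$ from~\eqref{eq:shearerPEqualsDominationP}, $\DominatedValue{\DependencyClassWeak{G}(\PUniformDomination{\DependencyClassWeak{G}})}=0$. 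I expect the main obstacle to be the boundary analysis underlying~\eqref{eq:dominatedValueJumpInfinite}: one must know that for an infinite connected $G$ the critical point $\PShearer{G}\vec{1}$ truly belongs to $\PShearerSetInterior{G}$ rather than only to $\PShearerSetBoundary{G}$ (as it does when $G$ is finite), so that the explicit lower bound of Theorem~\ref{thm:uniformlyDominatedVector} is available \emph{exactly} at the jump — this is the single place where the topology of $\PShearerSet{G}$ on infinite graphs, hence the infiniteness and connectedness of $G$, genuinely enters.
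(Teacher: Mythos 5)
Your proposal is correct and follows essentially the same route as the paper, which presents this theorem precisely as a corollary of Theorems~\ref{thm:shearerDominationEquivalenceInhomogeneous} and~\ref{thm:uniformlyDominatedVector} (together with Lemma~\ref{lem:zeroProbabilityBelow} and the inequality structure of figure~\ref{fig:structureOfInequalitiesInShearerDominationEquivalenceHomogeneous}) restricted to homogeneous parameters. In particular, you correctly isolate the one delicate point — that for infinite connected $G$ the critical homogeneous vector $\PShearer{G}\vec{1}$ lies in $\PShearerSetInterior{G}$ (the cross-section being the closed interval $[\PShearer{G},1]$, per the identification the paper makes via \cite[theorem 8.1]{ScottSokal_repulsive}), whereas for finite $G$ it lies on $\PShearerSetBoundary{G}$ — which is exactly what the paper's argument also hinges on.
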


\begin{figure}[!htbp]
\begin{displaymath}
	\MxyDominationProofInequalities{0.5cm}{0.5cm}
\end{displaymath}

\caption[Inequalities in the proof of \eqref{eq:shearerDominationEquivalenceHomogeneous}]{Inequalities in the proof of \eqref{eq:shearerDominationEquivalenceHomogeneous}. The four center inequalities follow straight from \eqref{eq:guaranteValueInequality} and \eqref{eq:strongIsWeak}. The inequality \LabelNonShearerImpliesNonDomination{} is an adaption of the approach used for $\FuzzKZ$ in \cite{LiggettSchonmannStacey_domination}, while inequality \LabelShearerImpliesUniformDomination{} is the novel interpretation of the optimal bounds of \NameShearersMeasure{}.}

\label{fig:structureOfInequalitiesInShearerDominationEquivalenceHomogeneous}
\end{figure}

The discontinuity described in \eqref{eq:dominatedValueJumpInfinite} also holds for the more esoteric case of graphs having no uniform bound on their degree. In this case $\PUniformDomination{\DependencyClassStrong{G}}=1$ and $\DominatedValue{\DependencyClassWeak{G}(1)}=1>0$. An explanation for this discontinous transition might come from statistical mechanics, via the connection with hard-core lattice gases made by Scott \& Sokal \cite{ScottSokal_repulsive}. It should be equivalent to the existence of a non-physical singularity of the entropy for negative real fugacities for all infinite connected lattices.\\

The graph $\FuzzKZ$ turns out to be a rare example of an infinite graph where we can construct \NameShearersMeasure{} explicitely, in this case as a $(k+1)$-factor \cite[section 4.2]{MathieuTemmel_kindependent}. A second case immediately deducible from previous work would be the $D$-regular tree $\Tree_D$, where
\begin{equation*}
	1-\PowerFracDual{(D-1)}{D}
	= \PShearer{\Tree_D}
	\le\PDomination{\Tree_D}
	\le 1-\PowerFracDual{(D-1)}{D}
\end{equation*}
by \cite[theorem 2]{Shearer_problem} and theorem \ref{thm:LSSDominationUpperBound}.
\subsection{Proofs of classical results}
\label{sec:classicalResults}
The following proofs are given for completeness and to be able to underline the similarity with the stochastic domination proofs.\\

\begin{proof}(of lemma \ref{lem:shearerMinimality})
It is sufficient to prove \eqref{eq:shearerMinimalityOVOEP} inductively for one-vertex extensions. We prove \eqref{eq:shearerMinimality} jointly by induction over the cardinality of $W$. The induction base for $W:=\Set{w}$ is
\begin{equation*}
	\Proba(Z_w=1)
	= p_w
	= \ShearerMeasure{G}{\vec{p}}(Y_w=1)
	=\ShearerCriticalFunction{(\Set{w},\emptyset)}(\vec{p})\,.
\end{equation*}
In the induction step we extend $W$ to $\widetilde{W}:=W\uplus\Set{v}$. Suppose that $\ShearerMeasure{G}{\vec{p}}(Y_W=\vec{1})=0$. Hence $\ShearerMeasure{G}{\vec{p}}(Y_{\widetilde{W}}=\vec{1})=0$, too, and \eqref{eq:shearerMinimalityCriticalFunction} holds trivially. If $\ShearerMeasure{G}{\vec{p}}(Y_{W}=\vec{1})>0$, then $\Proba(Z_{W}=\vec{1})>0$ by the induction hypothesis. Let $W\cap\Neighbours{v}=:\Set{w_1,\dotsc,w_m}$ and $W_i:=W\setminus\Set{w_i,\dotsc,w_m}$. If $m=0$, then we revert to the equality in the induction base. If $m\ge 1$ then
\begin{align*}
	&\FirstAlign\Proba(Z_v=1|Z_W=\vec{1})\\
	&=\frac{\Proba(Z_v=1,Z_W=\vec{1})}{\Proba(Z_W=\vec{1})}\\
	&\ge \frac%
		{\Proba(Z_W=\vec{1})-q_v\,\Proba(Z_{W\setminus\Neighbours{v}}=\vec{1})}
		{\Proba(Z_W=\vec{1})}
		&\text{as }Z\in\DependencyClassWeak{G}(\vec{p})\\
	&= 1 - \frac{q_v}{\prod_{i=1}^m \Proba(Z_{w_i}=\vec{1}|Z_{W_i}=\vec{1})}\\
	&\ge 1 - \frac{q_v}{\prod_{i=1}^m\ShearerOVOEP{W_i}{w_i}(\vec{p})}
		&\text{induction hypothesis as $\Cardinality{W_i}<\Cardinality{W}$ }\\
	&=\ShearerOVOEP{W}{v}(\vec{p})
	  &\text{using the fundamental identity \eqref{eq:shearerOVOEPFundamentalIdentity}}\\
\end{align*}
This proves \eqref{eq:shearerMinimalityOVOEP}. For \eqref{eq:shearerMinimalityCriticalFunction} see that
\begin{multline*}
  \Proba(Z_{\widetilde{W}}=\vec{1})
  =\Proba(Z_v=1|Z_W=\vec{1})\Proba(Z_W=\vec{1})\\
  \ge\ShearerOVOEP{W}{v}(\vec{p})\ShearerMeasure{G}{\vec{p}}(Y_W=\vec{1})
  =\ShearerMeasure{G}{\vec{p}}(Y_{\widetilde{W}}=\vec{1})\,.
\end{multline*}
\end{proof}

\begin{proof}(of theorem \ref{thm:LLLHomogeneousEscaping})
Assume that $q\le\PowerFracDual{(D-1)}{D}$. We claim that for every escaping $(W,v)$ (see definition \ref{def:shearerOVOEPEscaping})
\begin{equation}\label{eq:LLLHomogeneousEscapingInduction}
	\ShearerOVOEP{W}{v}(p) \ge 1-\frac{1}{D}\,.
\end{equation}
This claim implies that $\ShearerCriticalFunction{\Graph{W}}(p)\ge\left(\frac{D-1}{D}\right)^{\Cardinality{W}}>0$ for every finite $W\subseteq V$. Hence $p\ge\PShearer{G}$. We prove the claim \eqref{eq:LLLHomogeneousEscapingInduction} by induction over the cardinality of $W$. The induction base is given by
\begin{equation*}
	\ShearerOVOEP{\emptyset}{v}(p)
	=p
	\ge 1-\PowerFracDual{(D-1)}{D}
	\ge 1-\frac{1}{D}\,.
\end{equation*}
As $(W,v)$ is escaping $v$ has at most $m\le D-1$ neighbours in $W$, which we denote by $\Set{w_1,\dotsc,w_m}:=W\cap\Neighbours{v}$. Using the fundamental identity \eqref{eq:shearerOVOEPFundamentalIdentity} and \eqref{eq:LLLHomogeneousEscapingInduction} the induction step is
\begin{multline*}
	\ShearerOVOEP{W}{v}(p)
	= 1- \frac{q}{\prod_{i=1}^m \ShearerOVOEP{W\setminus\Set{w_i,\dotsc,w_m}}{w_i}(p)}\\
	\ge 1 - \frac{q}{\prod_{i=1}^m (1-\frac{1}{D})}
	\ge 1 - \frac{q}{\left(\frac{D-1}{D}\right)^{D-1}}\
	\ge 1 - \frac{1}{D}\,.
\end{multline*}
\end{proof}

\end{document}